\newcommand{\define}{\stackrel{\mbox{\tiny def}}{=}}
\begin{document}

\section{Introduction}\label{sec:Introduction}

\subsection{From Random Matrices to Radom Tensors}\label{sec:From Random Matrices to Radom Tensors}



A random matrix is a matrix-valued random variable—that is, a matrix in which some or all entries are random variables. Random matrices have played an important role in computational mathematics~\cite{MR41539}, physics~\cite{MR730191}, neuroscience~\cite{wainrib2013topological}, wireless communication~\cite{tulino2004random}, control~\cite{turnovsky1974stability}, etc. Many important properties of scientific and engineering systems can be modelled mathematically as matrix problems. In order to consider a high-dimensional system, it is often more convenient to consider tensors, or \emph{multidimensional data}, instead of matrices (two-dimensional data). 

Tensors have various applications in science and engineering~\cite{kolda2009tensor}. In numerical applications, tensors can be applied to solve multilinear system of equations~\cite{wang2019neural}, high-dimensional data fitting/regression~\cite{MR3395816}, tensor complementary problem~\cite{MR3947912}, tensor eigenvalue problem~\cite{MR3479021}, etc. In data processing applications, tensor theory applications include unsupervised separation of unknown mixtures of data signals~\cite{wu2010robust, mirsamadi2016generalized}, data filtering~\cite{muti2007survey}, MIMO (multi-input multi-output) code-division~\cite{de2008constrained, chen2011new}, radar, passive sensing, and communications~\cite{nion2010tensor, sidiropoulos2000parallel}. In other applications, tensors are also utilized to characterize data with \emph{coupling effects}, e.g., network signal processing~\cite{shen2020topology, shen2017tensor, fu2015joint} and image processing~\cite{ko2020fast, jiang2020framelet}. Tensor decomposition methods have been reported recently to establish the latent-variable models, such as topic models in~\cite{anandkumar2015tensor}, and the connections between the orthogonal tensor decomposition and the method of moments for undertaking the \emph{Latent Dirichlet Allocation} (LDA) in~\cite{sidiropoulos2017tensor}. However, all these applications assume that systems modelled by tensors are fixed and such assumption is not true and practical in solving tensors associated issues. In recent years, there are more works begin to target theory about random tensors, see~\cite{MR3616422},~\cite{MR3783911},~\cite{MR4166204},
~\cite{MR4163785},~\cite{MR4140540}, and references therein. In this work, we will focus on establishing a series of tail bounds for sums of random tensors. 

\subsection{Technical Results}\label{sec:Technical Results} 

Given a finite sequence of random Hermitian tensors $\{ \mathcal{X}_i \} \in \mathbb{C}^{I_1 \times \cdots \times I_M \times I_1 \times \cdots \times I_M}$, the main purpose of this work is to bound the following probability:
\begin{eqnarray}\label{eq:the problem of max eigen}
\mathbb{P} \left( \lambda_{\max}\left( \sum\limits_{i} \mathcal{X}_i \right) \geq \theta    \right),
\end{eqnarray}
where $\lambda_{\max}$ represents the largest eigenvalue of a Hermitian tensor obtained from \emph{eigenvalue decomposition}. The problem posted in Eq.~\eqref{eq:the problem of max eigen} are associated to the following problems: (1) the smallest and the largest singular value of a sum of random tensors with square or rectangular shapes; (2) extension random variable probability bounds, e.g., Chernoff and Bernstein bounds, to tensors settings; (3) tensor martingales and other adapted random sequences of tensors. 

There are two main technical tools required by this work to build those tensor probability bounds. The first is \emph{Laplace transform method}, which provides a systematic way to give tail bounds for the sum of scalar random variables. In~\cite{MR1966716}, the authors apply Laplace transform method to bound Eq.~\eqref{eq:the problem of max eigen} with the matrix setting, i.e., the tail probability for the maximum eigenvalue of the sum of Hermitian matrices is controlled by a matrix version of the moment-generating function. They proved following:
\begin{eqnarray}\label{eq:Ahlswede Winter bound for max eigen}
\mathbb{P} \left( \lambda_{\max}\left( \sum\limits_{i} \mathbf{X}_i \right) \geq \theta    \right) \leq \inf\limits_{t > 0} \Big\{ e^{-t \theta} \mathbb{E} \mathrm{Tr} \exp \left( \sum\limits_{i} t \mathbf{X}_i \right)\Big\}.
\end{eqnarray}
However, the bound to $\inf\limits_{t > 0} \Big\{ e^{-t \theta} \mathbb{E} \mathrm{Tr} \exp \left( \sum\limits_{i} t \mathbf{X}_i \right)\Big\}$ is far from optimal according to~\cite{MR2946459} (see Sections 3.7 and 4.8 in~\cite{MR2946459}). In this work, we extend the Laplace transform method to tensors. The other important technique utilized in this work is to extend Lieb's concavity theorem~\cite{MR332080} to tensors. Tensor Lieb’s theorem is introduced in Section~\ref{sec:Tensor Trace Concavity}, and we illustrate how to combine this result with the tensor Laplace transform method as our main technique to prove a series of random tensors bounds. We utilize this mechanism as our main approach to derive a large family of probability inequalities that are essentially tight in a wide variety of scenarios. Most random matrix inequalities studied in~\cite{MR2946459} have same origin as those random tensor inequalities discussed at this work, however, we enhance main tools, i.e., Laplace transform method and Lieb's concavity theorem, from matrices to tensors, and obtain new random tensor inequalities with tensor orders and dimensions as factors. 

This work will derive various inequalities based on Eq.~\eqref{eq:the problem of max eigen}. We will provide basic notations first before listing main inequalities investigated at this paper. The symbols $\lambda_{\min}$ and $\lambda_{\max}$ are used to represent the minimum and maximum eigenvalues of a Hermitian tensor. The notation $\succeq$ is used to indicate the semidefinite ordering of tensors. $\left\Vert \mathcal{A} \right\Vert$ is defined as the largest singular value of the tensor $\mathcal{A}$. 
The first category of concentration is associated with a sum of real numbers multiplied by independent random variables. If the independent random variables are drawn as normal (Gaussian) distribution or Rademacher distribution, the sum of real numbers multiplied by such random variables has sub-Gaussian tail behavior. In Section~\ref{sec:Tensor Gaussian Series}, we will provide the proof and its application about the following theorem. 

\begin{theorem}[Tensor Gaussian and Normal Series]\label{thm:TensorGaussianNormalSeries_intro}
Given a finite sequence $\mathcal{A}_i$ of fixed Hermitian tensors with dimensions as $\mathbb{C}^{I_1 \times \cdots \times I_M \times I_1 \times \cdots \times I_M}$, and let $\{ \alpha_i \}$ be a finite sequence of independent normal variables. We define 
\begin{eqnarray}\label{eq:4_5_intro}
\sigma^2 &\define& \left\Vert \sum\limits_{i=1}^{n} \mathcal{A}^2_i \right\Vert,
\end{eqnarray}
then, for all $\theta \geq 0$, we have 
\begin{eqnarray}\label{eq:4_3_intro}
\mathrm{Pr}\left( \lambda_{\max} \left( \sum\limits_{i=1}^{n}\alpha_i \mathcal{A}_i \right) \geq \theta \right) \leq \mathbb{I}_1^M e^{-\frac{\theta^2 }{2 \sigma^2}},
\end{eqnarray}
and
\begin{eqnarray}\label{eq:4_4_intro}
\mathrm{Pr}\left( \left\Vert \sum\limits_{i=1}^{n}\alpha_i \mathcal{A}_i \right\Vert \geq \theta \right) \leq 2 \mathbb{I}_1^M e^{-\frac{\theta^2 }{2 \sigma^2}}.
\end{eqnarray}
This theorem is also valid for a finite sequence of independent Rademacher random variables $\{ \alpha_i \}$.
\end{theorem}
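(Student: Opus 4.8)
The plan is to run the Laplace-transform argument in the Einstein-product setting, reducing everything to a single-term moment generating function computation. First I would invoke the tensor Laplace transform bound established earlier, which controls $\mathrm{Pr}\big(\lambda_{\max}(\sum_i \mathcal{X}_i) \geq \theta\big)$ by $\inf_{t>0} e^{-t\theta}\,\mathbb{E}\,\mathrm{Tr}\exp\big(\sum_i t\mathcal{X}_i\big)$, and then the tensor master tail bound coming from the tensor version of Lieb's concavity theorem, which decouples the expectation into $\mathbb{E}\,\mathrm{Tr}\exp\big(\sum_i t\mathcal{X}_i\big) \leq \mathrm{Tr}\exp\big(\sum_i \log \mathbb{E}\, e^{t\mathcal{X}_i}\big)$. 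Taking $\mathcal{X}_i = \alpha_i \mathcal{A}_i$, the whole problem collapses to estimating the tensor moment generating function $\mathbb{E}\, e^{t\alpha_i\mathcal{A}_i}$ of one summand.

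Second, I would evaluate this moment generating function through the eigenvalue decomposition of $\mathcal{A}_i$ under the Einstein product. Pushing the scalar identity $\mathbb{E}\, e^{s\alpha} = e^{s^2/2}$ (standard normal $\alpha$) through the tensor functional calculus gives the exact identity $\mathbb{E}\, e^{t\alpha_i\mathcal{A}_i} = e^{t^2\mathcal{A}_i^2/2}$, hence $\log \mathbb{E}\, e^{t\alpha_i\mathcal{A}_i} = \frac{t^2}{2}\mathcal{A}_i^2$. For a Rademacher $\alpha_i$ the same computation yields $\mathbb{E}\, e^{t\alpha_i\mathcal{A}_i} = \cosh(t\mathcal{A}_i)$, and since $\cosh(t\mathcal{A}_i)$ and $e^{t^2\mathcal{A}_i^2/2}$ are simultaneously diagonalizable, the scalar inequality $\cosh(s)\leq e^{s^2/2}$ lifts eigenvalue-by-eigenvalue through $\log$ to give $\log \mathbb{E}\, e^{t\alpha_i\mathcal{A}_i} \preceq \frac{t^2}{2}\mathcal{A}_i^2$ in either case.

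Third, by monotonicity of the trace exponential with respect to the semidefinite order on Hermitian tensors (a consequence of Weyl-type eigenvalue monotonicity), $\mathrm{Tr}\exp\big(\sum_i \log \mathbb{E}\, e^{t\alpha_i\mathcal{A}_i}\big) \leq \mathrm{Tr}\exp\big(\frac{t^2}{2}\sum_i \mathcal{A}_i^2\big)$, and bounding the trace of a positive tensor by its total dimension $\mathbb{I}_1^M = \prod_{k=1}^{M} I_k$ times its largest eigenvalue gives $\mathrm{Tr}\exp\big(\frac{t^2}{2}\sum_i\mathcal{A}_i^2\big) \leq \mathbb{I}_1^M\, e^{(t^2/2)\,\|\sum_i\mathcal{A}_i^2\|} = \mathbb{I}_1^M\, e^{t^2\sigma^2/2}$. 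Substituting into the Laplace bound and minimizing $\mathbb{I}_1^M\, e^{-t\theta + t^2\sigma^2/2}$ over $t>0$ at $t = \theta/\sigma^2$ yields \eqref{eq:4_3_intro}. For \eqref{eq:4_4_intro} I would use that $\|\mathcal{S}\| = \max\{\lambda_{\max}(\mathcal{S}),\lambda_{\max}(-\mathcal{S})\}$ for Hermitian $\mathcal{S}$, apply a union bound, and observe that $-\sum_i\alpha_i\mathcal{A}_i = \sum_i(-\alpha_i)\mathcal{A}_i$ has the same law (the $\alpha_i$ being symmetric) with the same $\sigma^2$, so each of the two tail events is bounded by $\mathbb{I}_1^M e^{-\theta^2/(2\sigma^2)}$, producing the factor $2$.

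The main obstacle is essentially upstream of this argument: it is the decoupling inequality $\mathbb{E}\,\mathrm{Tr}\exp\big(\sum_i t\mathcal{X}_i\big) \leq \mathrm{Tr}\exp\big(\sum_i \log \mathbb{E}\, e^{t\mathcal{X}_i}\big)$, whose proof requires the tensor Lieb concavity theorem combined with an iterated conditioning over the independent summands. Granting that result (as developed in the cited sections), the only points internal to this proof that need care are making the functional-calculus manipulations of $e^{t\alpha_i\mathcal{A}_i}$, $\cosh(t\mathcal{A}_i)$ and $\log(\cdot)$ rigorous through the Einstein-product eigenvalue decomposition, and correctly tracking the dimension factor $\mathbb{I}_1^M$ introduced when the trace is replaced by the largest eigenvalue.
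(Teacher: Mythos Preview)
Your proposal is correct and follows essentially the same route as the paper: compute the single-summand tensor MGF (the paper's Lemma~\ref{lma:mgf of Rademacher and normal rvs.}), feed the resulting bound $\log \mathbb{E}\,e^{t\alpha_i\mathcal{A}_i}\preceq \tfrac{t^2}{2}\mathcal{A}_i^2$ into the master tail machinery (the paper invokes Corollary~\ref{cor:3_7}, which packages together the Laplace transform, the Lieb-based decoupling, and the trace-to-$\lambda_{\max}$ step you spell out separately), optimize at $t=\theta/\sigma^2$, and then use the symmetry of the $\alpha_i$'s plus a union bound for the norm inequality. The only cosmetic difference is that you unpack the ingredients of Corollary~\ref{cor:3_7} explicitly rather than citing it as a black box.
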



Chernoff bound provides an estimate on the probability of the concentration results related to the number of successes in a sequence of independent random trials. In the tensor situation, the similar theorem concerns a sum of positive-semidefinite random
tensors subject to a uniform eigenvalue bound. The tensor Chernoff bound indicates that the largest (or smallest) eigenvalues of the tensor series have similar binomial random variable behavior. 

\begin{theorem}[Tensor Chernoff Bound]\label{thm:TensorChernoffBoundII_intro}
Consider a sequence $\{ \mathcal{X}_i  \in \mathbb{C}^{I_1 \times \cdots \times I_M  \times I_1 \times \cdots \times I_M } \}$ of independent, random, Hermitian tensors that satisfy
\begin{eqnarray}
\mathcal{X}_i \succeq \mathcal{O} \mbox{~~and~~} \lambda_{\max}(\mathcal{X}_i) \leq T
\mbox{~~ almost surely.}
\end{eqnarray}
Define following two quantities:
\begin{eqnarray}
\mu_{\max} \define \lambda_{\max}\left( \sum\limits_{i=1}^{n} \mathbb{E} \mathcal{X}_i \right) \mbox{~~and~~} 
\mu_{\min} \define \lambda_{\min}\left( \sum\limits_{i=1}^{n} \mathbb{E} \mathcal{X}_i \right),
\end{eqnarray}
then, let $\mathbb{I}_1^M = \prod\limits_{m=1}^M I_m$, we have following two inequalities:
\begin{eqnarray}\label{eq:Chernoff II Upper Bound_intro}
\mathrm{Pr} \left( \lambda_{\max}\left( \sum\limits_{i=1}^{n} \mathcal{X}_i \right) \geq (1+\theta) \mu_{\max} \right) \leq \mathbb{I}_1^M \left(\frac{e^{\theta}}{ (1 + \theta)^{1 + \theta}  }\right)^{\mu_{\max}/T} ,\mbox{~~ for $\theta \geq 0$;}
\end{eqnarray}
and
\begin{eqnarray}\label{eq:Chernoff II Lower Bound_intro}
\mathrm{Pr} \left( \lambda_{\min}\left( \sum\limits_{i=1}^{n} \mathcal{X}_i \right) \leq (1 - \theta) \mu_{\min} \right) \leq \mathbb{I}_1^M \left(\frac{e^{-\theta}}{ (1 - \theta)^{1 - \theta}  }\right)^{\mu_{\min}/T} ,\mbox{~~ for $\theta \in [0,1]$.}
\end{eqnarray}
\end{theorem}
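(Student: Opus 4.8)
The plan is to mimic the proof of the classical matrix Chernoff inequality, replacing matrix products by Einstein products and using the tensor Laplace transform bound (the tensor analogue of Eq.~\eqref{eq:Ahlswede Winter bound for max eigen}) together with the tensor Lieb concavity theorem. The starting point is the scalar fact that, for fixed $t>0$, the convexity of $x\mapsto e^{tx}$ gives $e^{tx}\le 1+\frac{e^{tT}-1}{T}x$ for all $x\in[0,T]$. Since each $\mathcal{X}_i$ is Hermitian with $\mathcal{O}\preceq\mathcal{X}_i\preceq T\,\mathcal{I}$ almost surely, the transfer rule for Hermitian tensors (spectral calculus respects the semidefinite order) yields $\exp(t\mathcal{X}_i)\preceq \mathcal{I}+\frac{e^{tT}-1}{T}\mathcal{X}_i$, and taking expectations, $\mathbb{E}\exp(t\mathcal{X}_i)\preceq \mathcal{I}+\frac{e^{tT}-1}{T}\mathbb{E}\mathcal{X}_i$, which is positive definite.

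Next I would pass to logarithms: operator monotonicity of the tensor logarithm combined with $\log(\mathcal{I}+\mathcal{A})\preceq\mathcal{A}$ (again the transfer rule, from $\log(1+x)\le x$) gives $\log\mathbb{E}\exp(t\mathcal{X}_i)\preceq \frac{e^{tT}-1}{T}\mathbb{E}\mathcal{X}_i$, and summing over $i$,
\[
\sum_{i=1}^{n}\log\mathbb{E}\exp(t\mathcal{X}_i)\ \preceq\ g(t)\sum_{i=1}^{n}\mathbb{E}\mathcal{X}_i,\qquad g(t):=\frac{e^{tT}-1}{T}\ \ge 0.
\]
Then I invoke the master tail bound obtained by feeding tensor Lieb's theorem into the tensor Laplace transform method: for $t>0$ and $s\in\mathbb{R}$,
\[
\mathrm{Pr}\Big(\lambda_{\max}\big(\textstyle\sum_i\mathcal{X}_i\big)\ge s\Big)\ \le\ e^{-ts}\,\mathrm{Tr}\exp\Big(\textstyle\sum_i\log\mathbb{E}\exp(t\mathcal{X}_i)\Big).
\]
Using monotonicity of the trace exponential under $\preceq$, followed by $\mathrm{Tr}\,\mathcal{M}\le \mathbb{I}_1^M\,\lambda_{\max}(\mathcal{M})$ for $\mathcal{M}\succeq\mathcal{O}$, and $\lambda_{\max}\big(\exp(g(t)\sum_i\mathbb{E}\mathcal{X}_i)\big)=\exp(g(t)\mu_{\max})$ since $g(t)\ge 0$, I arrive at $\mathrm{Pr}(\lambda_{\max}(\sum_i\mathcal{X}_i)\ge s)\le \mathbb{I}_1^M\exp(-ts+g(t)\mu_{\max})$.

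Finally I would optimize. Setting $s=(1+\theta)\mu_{\max}$, the exponent $-t(1+\theta)\mu_{\max}+g(t)\mu_{\max}$ is minimized over $t>0$ at $t=\frac1T\log(1+\theta)$, which substitutes to exactly $\frac{\mu_{\max}}{T}\log\!\big(e^{\theta}/(1+\theta)^{1+\theta}\big)$, giving Eq.~\eqref{eq:Chernoff II Upper Bound_intro}. For Eq.~\eqref{eq:Chernoff II Lower Bound_intro} I would run the same argument on $\{-\mathcal{X}_i\}$ via $\lambda_{\min}(\sum_i\mathcal{X}_i)=-\lambda_{\max}(-\sum_i\mathcal{X}_i)$, using the companion chord bound $e^{-tx}\le 1+\frac{e^{-tT}-1}{T}x$ on $[0,T]$ (here $g_-(t):=\frac{e^{-tT}-1}{T}<0$, so after exponentiating the largest eigenvalue is controlled by $\mu_{\min}=\lambda_{\min}(\sum_i\mathbb{E}\mathcal{X}_i)$), then minimizing $t(1-\theta)\mu_{\min}+g_-(t)\mu_{\min}$ over $t>0$ at $t=-\frac1T\log(1-\theta)$, which requires $\theta\in[0,1]$ for positivity of $t$ and of $1-\theta$. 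The step I expect to be the main obstacle is not the optimization but confirming that all the operator-theoretic ingredients — the transfer rule, operator monotonicity of the tensor logarithm, monotonicity of the trace exponential, and the eigenvalue/trace comparison — genuinely transfer to Hermitian tensors under the Einstein product (these should follow from the spectral theory for such tensors set up earlier in the paper), and keeping the sign bookkeeping consistent throughout the $\lambda_{\min}$ case.
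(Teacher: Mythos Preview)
Your proposal is correct and follows essentially the same approach as the paper. The paper derives the intermediate bound $\mathrm{Pr}(\lambda_{\max}(\sum_i\mathcal{X}_i)\ge s)\le \mathbb{I}_1^M\exp(-ts+g(t)\mu_{\max})$ by routing through Corollary~\ref{cor:3_9} (inside the proof of Theorem~\ref{thm:TensorChernoffBoundI}) and then applying $\log(1+x)\le x$, whereas you reach the same bound by invoking the master tail bound directly and bounding each $\log\mathbb{E}e^{t\mathcal{X}_i}$ via operator monotonicity of the tensor logarithm --- this is exactly the mechanism of Corollary~\ref{cor:3_7} with $\mathcal{A}_i=\mathbb{E}\mathcal{X}_i$; the chord estimate, the optimization in $t$, and the $\lambda_{\min}$ treatment via $\{-\mathcal{X}_i\}$ all match.
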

In Section~\ref{sec:Tensor Chernoff Bounds}, we will prove Chernoff inequality and discuss its applications. 


Bernstein inequality is another inequality to bound the sum of independent, bounded random tensors by restricting the range of the maximum eigenvalue of each random tensors. Bernstein inequality can provide tighter bound compared to Hoeffding inequality. 
In Section~\ref{sec:Tensor Bernstein Bounds}, following tensor Bernstein bound theorem and its applications are provided.
\begin{theorem}[Bounded $\lambda_{\max}$ Tensor Bernstein Bounds]\label{thm:Bounded Tensor Bernstein_intro}
Given a finite sequence of independent Hermitian tensors $\{ \mathcal{X}_i  \in \mathbb{C}^{I_1 \times \cdots \times I_M  \times I_1 \times \cdots \times I_M } \}$ that satisfy
\begin{eqnarray}\label{eq1:thm:Bounded Tensor Bernstein_intro}
\mathbb{E} \mathcal{X}_i = 0 \mbox{~~and~~} \lambda_{\max}(\mathcal{X}_i) \leq T 
\mbox{~~almost surely.} 
\end{eqnarray}
Define the total variance $\sigma^2$ as: $\sigma^2 \define \left\Vert \sum\limits_i^n \mathbb{E} \left( \mathcal{X}^2_i \right) \right\Vert$.
Then, we have following inequalities:
\begin{eqnarray}\label{eq2:thm:Bounded Tensor Bernstein_intro}
\mathrm{Pr} \left( \lambda_{\max}\left( \sum\limits_{i=1}^{n} \mathcal{X}_i \right)\geq \theta \right) \leq \mathbb{I}_1^M \exp \left( \frac{-\theta^2/2}{\sigma^2 + T\theta/3}\right);
\end{eqnarray}
and
\begin{eqnarray}\label{eq3:thm:Bounded Tensor Bernstein_intro}
\mathrm{Pr} \left( \lambda_{\max}\left( \sum\limits_{i=1}^{n} \mathcal{X}_i \right)\geq \theta \right) \leq \mathbb{I}_1^M \exp \left( \frac{-3 \theta^2}{ 8 \sigma^2}\right)~~\mbox{for $\theta \leq \sigma^2/T$};
\end{eqnarray}
and
\begin{eqnarray}\label{eq4:thm:Bounded Tensor Bernstein_intro}
\mathrm{Pr} \left( \lambda_{\max}\left( \sum\limits_{i=1}^{n} \mathcal{X}_i \right)\geq \theta \right) \leq \mathbb{I}_1^M \exp \left( \frac{-3 \theta}{ 8 T } \right)~~\mbox{for $\theta \geq \sigma^2/T$}.
\end{eqnarray}
\end{theorem}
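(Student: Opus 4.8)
The plan is to follow the standard Laplace-transform route, now lifted to tensors via the tools promised in the excerpt (tensor Laplace transform method plus tensor Lieb concavity). First I would invoke the tensor master tail inequality: for any $t>0$,
\begin{eqnarray*}
\mathrm{Pr}\left( \lambda_{\max}\left( \sum_{i=1}^{n} \mathcal{X}_i \right) \geq \theta \right) \leq \mathbb{I}_1^M e^{-t\theta} \prod_{i=1}^{n} \lambda_{\max}\left( \mathbb{E}\, e^{t \mathcal{X}_i} \right),
\end{eqnarray*}
which is the tensor analogue of the subadditivity-of-cumulants bound that the tensor Lieb theorem yields (the factor $\mathbb{I}_1^M$ coming from bounding trace by dimension times $\lambda_{\max}$, exactly as in Theorem~\ref{thm:TensorGaussianNormalSeries_intro}). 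So the whole problem reduces to controlling the tensor moment-generating function $\mathbb{E}\, e^{t\mathcal{X}_i}$ of a single centered, $\lambda_{\max}$-bounded summand.

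Next I would establish the key one-tensor lemma: under $\mathbb{E}\mathcal{X}_i = \mathcal{O}$ and $\lambda_{\max}(\mathcal{X}_i) \leq T$, for $0 < t < 3/T$,
\begin{eqnarray*}
\lambda_{\max}\left( \log \mathbb{E}\, e^{t\mathcal{X}_i} \right) \leq \frac{t^2/2}{1 - tT/3}\, \lambda_{\max}\left( \mathbb{E}\, \mathcal{X}_i^2 \right),
\end{eqnarray*}
more precisely a semidefinite bound $\mathbb{E}\, e^{t\mathcal{X}_i} \preceq \exp\!\big( g(t)\, \mathbb{E}\,\mathcal{X}_i^2 \big)$ with $g(t) = (t^2/2)/(1-tT/3)$. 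The scalar input is the elementary inequality $e^{x} \leq 1 + x + \frac{x^2/2}{1-x/3}\cdot(\text{something})$; concretely one uses that the function $f(x) = (e^x - 1 - x)/x^2$ is increasing, so for $x \leq tT$ one gets $e^{t x} \leq 1 + t x + f(tT)\, t^2 x^2$ pointwise for eigenvalues $x$ of $\mathcal{X}_i$ in $(-\infty, T]$; applying this via the spectral/eigenvalue decomposition of $\mathcal{X}_i$ (the tensor functional calculus used throughout the paper) gives $\mathbb{E}\, e^{t\mathcal{X}_i} \preceq \mathcal{I} + f(tT)\, t^2\, \mathbb{E}\,\mathcal{X}_i^2$ after taking expectations and using $\mathbb{E}\mathcal{X}_i = \mathcal{O}$; then $\mathcal{I} + \mathcal{A} \preceq e^{\mathcal{A}}$ for $\mathcal{A}\succeq\mathcal{O}$ closes it. Summing over $i$, using that the $\lambda_{\max}$ of a sum is subadditive after the Lieb step, and the definition $\sigma^2 = \left\Vert \sum_i \mathbb{E}(\mathcal{X}_i^2) \right\Vert$, I obtain
\begin{eqnarray*}
\mathrm{Pr}\left( \lambda_{\max}\left( \sum_{i=1}^{n} \mathcal{X}_i \right) \geq \theta \right) \leq \mathbb{I}_1^M \exp\!\left( -t\theta + \frac{t^2 \sigma^2/2}{1 - tT/3} \right), \qquad 0 < t < 3/T.
\end{eqnarray*}

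The remaining work is purely scalar optimization over $t$. For \eqref{eq2:thm:Bounded Tensor Bernstein_intro} I would plug in the near-optimal choice $t = \theta/(\sigma^2 + T\theta/3)$ and simplify to get exponent $-\theta^2/2 \big/ (\sigma^2 + T\theta/3)$. Inequalities \eqref{eq3:thm:Bounded Tensor Bernstein_intro} and \eqref{eq4:thm:Bounded Tensor Bernstein_intro} then follow by splitting on whether $\theta \leq \sigma^2/T$ or $\theta \geq \sigma^2/T$: in the first regime $\sigma^2 + T\theta/3 \leq \frac{4}{3}\sigma^2$, giving the $-3\theta^2/(8\sigma^2)$ form; in the second regime $\sigma^2 + T\theta/3 \leq \frac{4}{3}T\theta$, giving the $-3\theta/(8T)$ form.

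I expect the main obstacle to be entirely on the tensor side rather than the arithmetic: making sure the spectral-calculus manipulations (monotonicity of the operator/tensor exponential under $\preceq$, the transfer-rule step $\mathbb{E}\, e^{t\mathcal{X}_i} \preceq \mathcal{I} + f(tT) t^2 \mathbb{E}\mathcal{X}_i^2$, and the passage from the product of $\lambda_{\max}$ of MGFs to $\exp$ of a sum) are legitimate in the Einstein-product tensor algebra, i.e. that the tensor Lieb concavity theorem of Section~\ref{sec:Tensor Trace Concavity} genuinely delivers the cumulant subadditivity with the dimensional prefactor $\mathbb{I}_1^M$. Once that infrastructure is in place, the proof is a line-by-line tensorization of the classical matrix Bernstein argument.
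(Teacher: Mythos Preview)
Your overall plan is the right one and matches the paper's strategy, but the displayed ``master tail inequality''
\[
\mathrm{Pr}\Big( \lambda_{\max}\Big( \sum_{i} \mathcal{X}_i \Big) \geq \theta \Big) \leq \mathbb{I}_1^M\, e^{-t\theta} \prod_{i} \lambda_{\max}\big( \mathbb{E}\, e^{t \mathcal{X}_i} \big)
\]
is \emph{not} what the tensor Lieb step delivers, and it is too weak to produce the variance parameter $\sigma^2=\lambda_{\max}\big(\sum_i \mathbb{E}\mathcal{X}_i^2\big)$. From this product form together with your semidefinite MGF bound you would only obtain $\sum_i \lambda_{\max}(\mathbb{E}\mathcal{X}_i^2)$ in the exponent (the Ahlswede--Winter ``weak variance''), and your phrase ``using that the $\lambda_{\max}$ of a sum is subadditive after the Lieb step'' points in exactly the wrong direction: the whole purpose of Lieb is to \emph{avoid} splitting the sum via subadditivity of $\lambda_{\max}$. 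The paper's route (Corollary~\ref{cor:3_7}) keeps the master bound in the form $e^{-t\theta}\,\mathrm{Tr}\exp\big(\sum_i \log \mathbb{E}e^{t\mathcal{X}_i}\big)$, inserts the semidefinite estimate $\log\mathbb{E}e^{t\mathcal{X}_i}\preceq g(t)\,\mathbb{E}\mathcal{X}_i^2$ \emph{inside} the trace exponential using monotonicity of $\mathrm{Tr}\exp$, and only then replaces the trace by $\mathbb{I}_1^M$ times $\lambda_{\max}$ of the \emph{entire} sum $\sum_i \mathbb{E}\mathcal{X}_i^2$. That is how $\sigma^2$ enters; fix this and your argument goes through.

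Aside from that, your proof differs from the paper's only in a harmless scalar detail: the paper uses $g(t)=e^{t}-t-1$ (after scaling $T=1$) in Lemma~\ref{lam:Bounded Bernstein mgf}, optimizes at $t=\log(1+\theta/\sigma^2)$ to get the Bennett exponent, and then simplifies to the Bernstein form; you instead pre-bound $e^{t}-t-1\leq (t^2/2)/(1-t/3)$ and optimize the rational exponent directly with $t=\theta/(\sigma^2+T\theta/3)$. Both land on \eqref{eq2:thm:Bounded Tensor Bernstein_intro}, and your case split for \eqref{eq3:thm:Bounded Tensor Bernstein_intro} and \eqref{eq4:thm:Bounded Tensor Bernstein_intro} is identical to the paper's.
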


We also applied techniques developed at this work to tensor martingales in Section~\ref{sec:Martingale Deviation Bounds}. In probability theory, the Azuma inequality gives a concentration result for the values of martingales that have bounded differences. Tensor Azuma inequality is given below.
%
\begin{theorem}[Tensor Azuma Inequality]\label{thm:Tensor Azuma_intro}
Given a finite adapted sequence of Hermitian tensors $\{ \mathcal{X}_i  \in \mathbb{C}^{I_1 \times \cdots \times I_M  \times I_1 \times \cdots \times I_M } \}$ and a fixed sequence of Hermitian tensors $\{ \mathcal{A}_i \}$ that satisfy
\begin{eqnarray}\label{eq1:thm:Tensor Azuma_intro}
\mathbb{E}_{i-1} \mathcal{X}_i = 0 \mbox{~~and~~} \mathcal{X}^2_i \preceq  \mathcal{A}^2_i \mbox{almost surely}, 
\end{eqnarray}
where $i = 1,2,3,\cdots$. 

Define the total varaince $\sigma^2$ as: $\sigma^2 \define \left\Vert \sum\limits_i^n \mathcal{A}_i^2 \right\Vert$.
Then, we have following inequalities:
\begin{eqnarray}\label{eq2:thm:Tensor Azuma_intro}
\mathrm{Pr} \left( \lambda_{\max}\left( \sum\limits_{i=1}^{n} \mathcal{X}_i \right)\geq \theta \right) \leq \mathbb{I}_1^M e^{-\frac{\theta^2}{8 \sigma^2}}.
\end{eqnarray}
\end{theorem}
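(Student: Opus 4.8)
The plan is to adapt the proof of the matrix Azuma inequality, substituting the tensor Laplace transform method and the tensor version of Lieb's concavity theorem (Section~\ref{sec:Tensor Trace Concavity}) for their matrix counterparts. First I would reduce the tail probability to a trace moment generating function by the tensor analogue of~\eqref{eq:Ahlswede Winter bound for max eigen}: for every $t>0$,
\[
\mathrm{Pr}\left(\lambda_{\max}\left(\sum_{i=1}^{n}\mathcal{X}_i\right)\geq\theta\right)\leq e^{-t\theta}\,\mathbb{E}\,\mathrm{Tr}\exp\left(\sum_{i=1}^{n}t\mathcal{X}_i\right),
\]
so it remains to dominate $\mathbb{E}\,\mathrm{Tr}\exp\left(t\sum_i\mathcal{X}_i\right)$ by a deterministic quantity and then optimize over $t$.

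The adapted (rather than independent) structure is handled by peeling off the summands one at a time against the natural filtration $\{\mathcal{F}_i\}$. Writing $\mathbb{E}_{i-1}$ for conditional expectation given $\mathcal{F}_{i-1}$ and $\mathcal{W}_{i-1}=\sum_{j<i}t\mathcal{X}_j$ (which is $\mathcal{F}_{i-1}$-measurable), the tensor Lieb concavity theorem — that $\mathcal{Z}\mapsto\mathrm{Tr}\exp(\mathcal{H}+\log\mathcal{Z})$ is concave on positive-definite $\mathcal{Z}$ — combined with Jensen's inequality applied to $\mathbb{E}_{i-1}$ yields
\[
\mathbb{E}_{i-1}\,\mathrm{Tr}\exp\left(\mathcal{W}_{i-1}+t\mathcal{X}_i\right)\leq\mathrm{Tr}\exp\left(\mathcal{W}_{i-1}+\log\mathbb{E}_{i-1}e^{t\mathcal{X}_i}\right).
\]
Taking total expectations and iterating this for $i=n,n-1,\dots,1$, using at each stage that $\mathcal{M}\mapsto\mathrm{Tr}\exp(\mathcal{H}+\mathcal{M})$ is monotone in the semidefinite order, collapses the trace m.g.f.\ to $\mathrm{Tr}\exp\left(\sum_{i}\Xi_i\right)$ as soon as each conditional cumulant $\log\mathbb{E}_{i-1}e^{t\mathcal{X}_i}$ is dominated by a \emph{fixed} Hermitian tensor $\Xi_i$.

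The crux is therefore the tensor Azuma cumulant estimate: from $\mathbb{E}_{i-1}\mathcal{X}_i=\mathcal{O}$ and $\mathcal{X}_i^2\preceq\mathcal{A}_i^2$ I need a semidefinite bound $\log\mathbb{E}_{i-1}e^{t\mathcal{X}_i}\preceq g(t)\,\mathcal{A}_i^2$ for a fixed even scalar function $g$ with $g(t)=O(t^2)$, the precise form of $g$ being what produces the constant $8$. I would obtain it by symmetrization: at the level of the trace m.g.f.\ — where it is legitimate because $\mathcal{M}\mapsto\mathrm{Tr}\exp(\mathcal{H}+\mathcal{M})$ is convex and $\mathbb{E}_{i-1}\mathcal{X}'_i=\mathcal{O}$ for an independent conditional copy $\mathcal{X}'_i$ — replace $\mathcal{X}_i$ by the conditionally symmetric increment $\mathcal{Y}_i=\mathcal{X}_i-\mathcal{X}'_i$, which satisfies $\mathcal{Y}_i^2\preceq 2\bigl(\mathcal{X}_i^2+(\mathcal{X}'_i)^2\bigr)\preceq 4\mathcal{A}_i^2$; since $\mathcal{Y}_i$ is conditionally symmetric, $\mathbb{E}_{i-1}e^{t\mathcal{Y}_i}=\mathbb{E}_{i-1}\cosh(t\mathcal{Y}_i)$, and bounding the even function $\cosh$ against a quadratic through the eigenvalue decomposition of $\mathcal{Y}_i$ under the Einstein product (using $\mathcal{Y}_i^2\preceq 4\mathcal{A}_i^2$) gives the cumulant estimate, after which $\mathcal{I}+\mathcal{M}\preceq e^{\mathcal{M}}$ for $\mathcal{M}\succeq\mathcal{O}$ converts it to the exponential form. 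Feeding this into the iteration above, together with $\sum_i g(t)\mathcal{A}_i^2=g(t)\sum_i\mathcal{A}_i^2$, the bound $\mathrm{Tr}\exp(\mathcal{M})\leq\mathbb{I}_1^M\,e^{\lambda_{\max}(\mathcal{M})}$, and the definition $\sigma^2=\left\Vert\sum_i\mathcal{A}_i^2\right\Vert$, bounds the left-hand side of~\eqref{eq2:thm:Tensor Azuma_intro} by $\mathbb{I}_1^M\,e^{-t\theta+g(t)\sigma^2}$ for every $t>0$; minimizing the exponent over $t$ then produces the claimed $\mathbb{I}_1^M e^{-\theta^2/(8\sigma^2)}$.

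I expect the cumulant estimate to be the main obstacle. The scalar Hoeffding/Azuma argument does not carry over directly because the exponential map on tensors is neither operator convex nor operator monotone, so one cannot simply interpolate $e^{t\mathcal{X}_i}$ between the endpoints $\pm\mathcal{A}_i$; the symmetrization has to be performed at the trace level rather than at the level of tensor-valued moment generating functions, and the genuinely delicate point is to keep the resulting estimate expressed through $\mathcal{A}_i^2$ — rather than the coarser $\|\mathcal{A}_i\|^2\mathcal{I}$ — so that the variance proxy $\left\Vert\sum_i\mathcal{A}_i^2\right\Vert$, and not $\sum_i\|\mathcal{A}_i\|^2$, controls the exponent. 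Everything else should be routine once the tensor functional calculus and the tensor Lieb inequality of Section~\ref{sec:Tensor Trace Concavity} are available: in particular, that the Lieb concavity theorem and Jensen's inequality remain valid under conditional expectation with respect to the filtration, and the measurability bookkeeping in the peeling step.
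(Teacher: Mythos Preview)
Your overall architecture --- Laplace transform, trace-level symmetrization, Lieb concavity, then optimize in $t$ --- matches the paper's, but the order in which you invoke the pieces does not quite close. You first apply Lieb--Jensen with respect to $\mathbb{E}_{i-1}$ to isolate $\log\mathbb{E}_{i-1}e^{t\mathcal{X}_i}$, and then declare that a trace-level symmetrization will furnish the \emph{semidefinite} bound $\log\mathbb{E}_{i-1}e^{t\mathcal{X}_i}\preceq g(t)\mathcal{A}_i^2$. It will not: symmetrization at the trace level only controls $\mathbb{E}_{i-1}\mathrm{Tr}\exp(\mathcal{W}_{i-1}+t\mathcal{X}_i)$, not the tensor-valued quantity $\mathbb{E}_{i-1}e^{t\mathcal{X}_i}$ itself, and you cannot pass from $e^{t\mathcal{X}_i}$ to $\mathbb{E}'e^{t(\mathcal{X}_i-\mathcal{X}'_i)}$ by Jensen since the exponential is not operator convex. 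The fix is to reverse the order: symmetrize first at the trace level, then apply Lieb--Jensen over the \emph{Rademacher} variable, conditioning on the random tensor. This is exactly what the paper does (Lemma~\ref{lma:tensor symmetriation} followed by Corollary~\ref{cor:3.3} and Lemma~\ref{lma:Azuma CGF}): one gets $\log\mathbb{E}_\beta e^{2\beta t\mathcal{X}_i}=\log\cosh(2t\mathcal{X}_i)\preceq 2t^2\mathcal{X}_i^2\preceq 2t^2\mathcal{A}_i^2$, a genuine semidefinite bound because both sides are functions of $\mathcal{X}_i$ alone.

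One genuine difference: the paper's symmetrization lemma uses Golden--Thompson plus Cauchy--Schwarz to replace $\mathcal{X}_i-\mathcal{X}'_i$ by $2\beta\mathcal{X}_i$, so that the cumulant bound only needs $\mathcal{X}_i^2\preceq\mathcal{A}_i^2$. Your route via $\mathcal{Y}_i=\mathcal{X}_i-\mathcal{X}'_i$ and $\mathcal{Y}_i^2\preceq 4\mathcal{A}_i^2$ also works (once the ordering is repaired: apply Lieb--Jensen over $\beta$ conditioning on $\mathcal{Y}_i$, then use $\log\cosh(t\mathcal{Y}_i)\preceq t^2\mathcal{Y}_i^2/2\preceq 2t^2\mathcal{A}_i^2$), yields the same constant $8$, and has the virtue of avoiding Golden--Thompson.
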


In probability theory, Hoeffding's inequality builds an upper bound on the probability that the sum of bounded independent random variables drifts away from its expected value by more than a certain amount~\cite{MR144363}. In this work, we generalize this result to the tensor setting by considering random tensors that satisfy semidefinite upper bounds in Section~\ref{sec:Tensor Martingale Deviation Bounds}. In the tensor situation, the maximum eigenvalue for the sum of tensors also have subgaussian behavior.
%
%
\begin{theorem}[Tensor Hoeffding Inequality]\label{thm:Tensor Hoeffding}
Given a finite sequence of independent Hermitian tensors $\{ \mathcal{X}_i  \in \mathbb{C}^{I_1 \times \cdots \times I_M  \times I_1 \times \cdots \times I_M } \}$ and a fixed sequence of Hermitian tensors $\{ \mathcal{A}_i \}$ that satisfy
\begin{eqnarray}\label{eq1:thm:Tensor Hoeffding}
\mathbb{E}\mathcal{X}_i = 0 \mbox{~~and~~} \mathcal{X}^2_i \preceq  \mathcal{A}^2_i \mbox{almost surely}, 
\end{eqnarray}
where $i = 1,2,3,\cdots$. 

Define the total variance $\sigma^2$ as: $\sigma^2 \define \left\Vert \sum\limits_i^n \mathcal{A}_i^2 \right\Vert$.
Then, we have following inequalities:
\begin{eqnarray}\label{eq2:thm:Tensor Hoeffding}
\mathrm{Pr} \left( \lambda_{\max}\left( \sum\limits_{i=1}^{n} \mathcal{X}_i \right)\geq \theta \right) \leq \mathbb{I}_1^M e^{-\frac{\theta^2}{8 \sigma^2}}.
\end{eqnarray}
\end{theorem}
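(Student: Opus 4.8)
The plan is to mirror the classical matrix Hoeffding argument of Tropp, but using the tensor Laplace transform method and tensor Lieb concavity theorem that were established earlier in the paper. First I would invoke the tensor Laplace transform bound (the tensor analogue of Eq.~\eqref{eq:Ahlswede Winter bound for max eigen}), which gives
\begin{eqnarray}\label{eq:hoeffding_laplace}
\mathrm{Pr}\left( \lambda_{\max}\left( \sum_{i=1}^n \mathcal{X}_i \right) \geq \theta \right) \leq \inf_{t>0} \left\{ e^{-t\theta}\, \mathbb{E}\,\mathrm{Tr}\exp\left( \sum_{i=1}^n t\mathcal{X}_i \right) \right\}.
\end{eqnarray}
The key structural step is then the subadditivity of the tensor cumulant generating function: using the tensor Lieb concavity theorem together with Jensen's inequality (applied conditionally, peeling off one $\mathcal{X}_i$ at a time), one shows
\begin{eqnarray}\label{eq:hoeffding_subadd}
\mathbb{E}\,\mathrm{Tr}\exp\left( \sum_{i=1}^n t\mathcal{X}_i \right) \leq \mathrm{Tr}\exp\left( \sum_{i=1}^n \log \mathbb{E}\, e^{t\mathcal{X}_i} \right).
\end{eqnarray}
This reduces everything to bounding the individual matrix/tensor moment generating functions $\mathbb{E}\, e^{t\mathcal{X}_i}$.

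Next I would establish the per-tensor cumulant bound. Since $\mathbb{E}\mathcal{X}_i = 0$ and $\mathcal{X}_i^2 \preceq \mathcal{A}_i^2$ almost surely, I would use the Hoeffding-type scalar inequality $e^{tx} \leq \cosh(tx) + x\sinh(tx)/\|x\|$ on the spectral argument, or more directly the bound $\log \mathbb{E}\, e^{t\mathcal{X}_i} \preceq \frac{t^2}{2}\mathcal{A}_i^2$ (this is the tensor transcription of Tropp's Lemma for Hoeffding; the factor that eventually produces the $8$ in the exponent comes from a slightly lossy step such as $g(t) = e^{t} - t - 1 \leq \frac{t^2}{2}$ being replaced by a cruder estimate, or from using $\cosh$ bounds). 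Summing these, applying monotonicity of the trace exponential under the semidefinite order, and using $\mathrm{Tr}\exp(\cdot) \leq \mathbb{I}_1^M \lambda_{\max}\exp(\cdot) = \mathbb{I}_1^M \exp(\lambda_{\max}(\cdot))$, I obtain
\begin{eqnarray}\label{eq:hoeffding_presup}
\mathrm{Pr}\left( \lambda_{\max}\left( \sum_{i=1}^n \mathcal{X}_i \right) \geq \theta \right) \leq \mathbb{I}_1^M \inf_{t>0} \exp\left( -t\theta + \frac{t^2 c}{2}\sigma^2 \right)
\end{eqnarray}
for the appropriate constant $c$, where I used $\lambda_{\max}\left(\sum_i \mathcal{A}_i^2\right) = \|\sum_i \mathcal{A}_i^2\| = \sigma^2$.

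Finally I would optimize over $t>0$: the minimizer is $t = \theta/(c\sigma^2)$, yielding the exponent $-\theta^2/(2c\sigma^2)$, and with the constant tracked through the Hoeffding lemma this gives exactly $-\theta^2/(8\sigma^2)$, establishing Eq.~\eqref{eq2:thm:Tensor Hoeffding}. I expect the main obstacle to be the rigorous justification of the subadditivity step Eq.~\eqref{eq:hoeffding_subadd} in the tensor setting — this is precisely where the tensor Lieb concavity theorem is needed, and one must be careful that the Einstein-product functional calculus (tensor logarithm, tensor exponential, tensor square) behaves exactly as in the matrix case under the semidefinite order, and that the conditional-expectation peeling argument for adapted/independent sequences goes through verbatim. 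The per-tensor cumulant bound is the secondary technical point, but it reduces to a scalar inequality applied spectrally and should be routine once the functional calculus is in place.
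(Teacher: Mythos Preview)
Your route diverges from the paper's, and the divergence matters. The paper does not prove the Hoeffding inequality via the subadditivity lemma (Lemma~\ref{lma:subadditivity of tensor cgfs}) plus a per-tensor cumulant bound; it simply observes that independent zero-mean tensors form a martingale difference sequence and invokes the tensor Azuma inequality (Theorem~\ref{thm:Tensor Azuma}) verbatim. The Azuma proof itself does \emph{not} bound $\log\mathbb{E}e^{t\mathcal{X}_i}$ in the semidefinite order. Instead it peels off $\mathcal{X}_n$ by first applying the symmetrization Lemma~\ref{lma:tensor symmetriation} inside the trace exponential to replace $t\mathcal{X}_n$ by $2\beta t\mathcal{X}_n$, then uses Corollary~\ref{cor:3.3} to pull the conditional expectation into a logarithm, and finally applies the Rademacher CGF bound of Lemma~\ref{lma:Azuma CGF}, which gives $\log\mathbb{E}[e^{2\beta t\mathcal{X}_n}\mid\mathcal{X}_n]\preceq 2t^2\mathcal{A}_n^2$. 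Iterating yields $\mathbb{E}\mathrm{Tr}\exp(\sum_i t\mathcal{X}_i)\leq\mathrm{Tr}\exp(2t^2\sum_i\mathcal{A}_i^2)$, and optimizing $-t\theta+2t^2\sigma^2$ at $t=\theta/(4\sigma^2)$ produces the constant~$8$.

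The gap in your plan is precisely the step you flag as ``routine'': the semidefinite bound $\log\mathbb{E}e^{t\mathcal{X}_i}\preceq \tfrac{t^2}{2}\mathcal{A}_i^2$ (or any bound of the form $ct^2\mathcal{A}_i^2$) for a general zero-mean Hermitian tensor with $\mathcal{X}_i^2\preceq\mathcal{A}_i^2$. A scalar Hoeffding inequality cannot be ``applied spectrally'' here because different realizations of $\mathcal{X}_i$ need not commute with one another or with $\mathcal{A}_i$, so there is no common eigenbasis in which to work. This is exactly why the paper (and Tropp in the matrix case) introduces the Rademacher symmetrization: after symmetrizing, the only randomness left in the inner MGF is the Rademacher sign, and for that Lemma~\ref{lma:mgf of Rademacher and normal rvs.} gives the needed operator inequality cleanly. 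Your uncertainty about where the $8$ comes from is a symptom of this missing ingredient --- it is the factor $2$ from symmetrization combined with the $2$ in the exponent of the Rademacher bound, not a ``cruder estimate'' of $e^t-t-1$.
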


McDiarmid inequality is crucial in determining the stability of machine learning algorithms by applying a simple concept: small change in training set will make small change in hypothesis. In Section~\ref{sec:Tensor Martingale Deviation Bounds}, we extend McDiarmid inequality from the scalar-valued function to the tensor-valued function as shown below. 
\begin{theorem}[Tensor McDiarmid Inequality]\label{thm:Tensor McDiarmid_intro}
Given a set of $n$ independent random variables, i.e. $\{X_i: i = 1,2,\cdots n\}$, and let 
$F$ be a Hermitian tensor-valued function that maps these $n$ random variables to a Hermitian tensor of dimension within $\mathbb{C}^{I_1 \times \cdots \times I_M  \times I_1 \times \cdots \times I_M }$. Consider a sequence of Hermitian tensors $\{ \mathcal{A}_i \}$ that satisfy
\begin{eqnarray}\label{eq1:thm:Tensor McDiarmid_intro}
\left( F(x_1,\cdots,x_i,\cdots,x_n)  -  F(x_1,\cdots,x'_i,\cdots,x_n) \right)^2 \preceq \mathcal{A}^2_i,
\end{eqnarray}
where $x_i, x'_i \in X_i$ and $1 \leq i \leq n$. Define the total variance $\sigma^2$ as: $\sigma^2 \define \left\Vert \sum\limits_i^n \mathcal{A}_i^2 \right\Vert $.
Then, we have following inequality:
\begin{eqnarray}\label{eq2:thm:Tensor McDiarmid_intro}
\mathrm{Pr} \left( \lambda_{\max}\left( F(x_1,\cdots,x_n) - \mathbb{E}F(x_1,\cdots,x_n )\right)\geq \theta \right) \leq \mathbb{I}_1^M e^{-\frac{\theta^2}{8 \sigma^2}}.
\end{eqnarray}
\end{theorem}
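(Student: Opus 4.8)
The plan is to reduce the statement to the Tensor Azuma Inequality (Theorem~\ref{thm:Tensor Azuma_intro}) by means of the classical Doob martingale construction. Let $\mathcal{F}_k$ denote the $\sigma$-algebra generated by $X_1,\dots,X_k$ (with $\mathcal{F}_0$ trivial), and define
\begin{eqnarray}
\mathcal{Y}_k \define \mathbb{E}\left[ F(X_1,\dots,X_n) \mid \mathcal{F}_k \right], \qquad k = 0,1,\dots,n,
\end{eqnarray}
so that $\mathcal{Y}_0 = \mathbb{E}F(X_1,\dots,X_n)$ and $\mathcal{Y}_n = F(X_1,\dots,X_n)$. Put $\mathcal{D}_k \define \mathcal{Y}_k - \mathcal{Y}_{k-1}$; then $\sum_{k=1}^{n}\mathcal{D}_k = F(X_1,\dots,X_n) - \mathbb{E}F(X_1,\dots,X_n)$, the sequence $\{\mathcal{D}_k\}$ is adapted to $\{\mathcal{F}_k\}$, and $\mathbb{E}_{k-1}\mathcal{D}_k = \mathbb{E}\left[\mathcal{Y}_k \mid \mathcal{F}_{k-1}\right] - \mathcal{Y}_{k-1} = \mathcal{O}$ by the tower property. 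Thus $\{\mathcal{D}_k\}$ is a martingale difference sequence of Hermitian tensors.

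The crux is to show $\mathcal{D}_k^2 \preceq \mathcal{A}_k^2$ almost surely, so that the \emph{fixed} sequence $\{\mathcal{A}_k\}$ of hypothesis~\eqref{eq1:thm:Tensor McDiarmid_intro} is precisely the one demanded by Theorem~\ref{thm:Tensor Azuma_intro}. Introduce the partial-expectation map $G_k(x_1,\dots,x_k) \define \mathbb{E}\left[ F(x_1,\dots,x_k,X_{k+1},\dots,X_n) \right]$. By independence of the $X_i$ one has $\mathcal{Y}_k = G_k(X_1,\dots,X_k)$ and $\mathcal{Y}_{k-1} = \mathbb{E}_{X_k'}\left[ G_k(X_1,\dots,X_{k-1},X_k') \right]$, where $X_k'$ is an independent copy of $X_k$, hence
\begin{eqnarray}
\mathcal{D}_k = \mathbb{E}_{X_k'}\left[ G_k(X_1,\dots,X_{k-1},X_k) - G_k(X_1,\dots,X_{k-1},X_k') \right].
\end{eqnarray}
Because the bounded-difference condition holds for every realization of $X_{k+1},\dots,X_n$, one application of a tensor operator Jensen inequality for the operator convex map $\mathcal{Z}\mapsto\mathcal{Z}^2$ (i.e. $(\mathbb{E}\mathcal{Z})^2 \preceq \mathbb{E}\mathcal{Z}^2$ for Hermitian $\mathcal{Z}$) gives $\left( G_k(x_1,\dots,x_{k-1},x_k) - G_k(x_1,\dots,x_{k-1},x_k') \right)^2 \preceq \mathcal{A}_k^2$ for all arguments; a second application, now averaging over $X_k'$ conditionally on $\mathcal{F}_{k-1}$, yields $\mathcal{D}_k^2 \preceq \mathbb{E}_{X_k'}\left[ \left( G_k(X_1,\dots,X_{k-1},X_k) - G_k(X_1,\dots,X_{k-1},X_k')\right)^2 \right] \preceq \mathcal{A}_k^2$.

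It then remains only to invoke Theorem~\ref{thm:Tensor Azuma_intro} on the martingale difference sequence $\{\mathcal{D}_k\}$ with $\sigma^2 = \left\Vert \sum_{k=1}^{n}\mathcal{A}_k^2 \right\Vert$, which gives
\begin{eqnarray}
\mathrm{Pr}\left( \lambda_{\max}\left( F(X_1,\dots,X_n) - \mathbb{E}F(X_1,\dots,X_n)\right) \geq \theta \right) \leq \mathbb{I}_1^M e^{-\frac{\theta^2}{8\sigma^2}}.
\end{eqnarray}
The main obstacle I expect is the rigorous handling of the two Jensen steps: one needs the tensor analogue of operator Jensen's inequality $(\mathbb{E}\mathcal{Z})^2 \preceq \mathbb{E}\mathcal{Z}^2$, together with careful bookkeeping of which variables are integrated out at each stage so that $\mathcal{D}_k^2$ is dominated by the \emph{deterministic} tensor $\mathcal{A}_k^2$ rather than by some random majorant — this is exactly what makes $\{\mathcal{A}_k\}$ a legitimate input to the Tensor Azuma Inequality. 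The operator convexity of the square map carries over from matrices to tensors through the Einstein-product correspondence already exploited in the rest of the paper, so no essentially new estimate is required beyond what has been developed.
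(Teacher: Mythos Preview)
Your proposal is correct and follows essentially the same route as the paper: both build the Doob martingale $\mathcal{Y}_k=\mathbb{E}[F\mid\mathcal{F}_k]$, verify that the martingale differences satisfy $\mathcal{D}_k^2\preceq\mathcal{A}_k^2$ via an independent-copy argument, and then invoke the Tensor Azuma inequality (the paper uses its Corollary~\ref{cor:7.2}, which is just Azuma restated for a martingale). If anything, you are more explicit than the paper about the two applications of operator Jensen $(\mathbb{E}\mathcal{Z})^2\preceq\mathbb{E}(\mathcal{Z}^2)$ needed to pass from the pointwise bound~\eqref{eq1:thm:Tensor McDiarmid_intro} to $\mathcal{D}_k^2\preceq\mathcal{A}_k^2$; the paper records only the averaged bound $\mathbb{E}_{X_{i+1}}\cdots\mathbb{E}_{X_n}\mathbb{E}_{X'_i}(F(\mathbf{x})-F(\mathbf{x}'))^2\preceq\mathcal{A}_i^2$ and tacitly uses the same Jensen step (stated earlier in its preliminaries) to conclude.
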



\subsection{Paper Organization}\label{sec:Paper Organization}

The remaining part of this paper is organized as follows. Basic probability theory required by our proofs and tensor notations are provided in Section~\ref{sec:Preliminaries of Probability and Tensors}. Main technical tools for tensor tail bounds are discussed in  Section~\ref{sec:Trace Concavity Method}. Section~\ref{sec:Tensor Gaussian Series} utilizes Gaussian and Rademacher series as case studies to explain tensor probability inequalities. Tensor Chernoff bound and its applications are discussed in Section~\ref{sec:Tensor Chernoff Bounds}. In Section~\ref{sec:Tensor Bernstein Bounds}, Tensor Bernstein bound and its applications are provided. Several martingale results based on random tensor are presented in Section~\ref{sec:Martingale Deviation Bounds}. Concluding remarks are given by Section~\ref{sec:Conclusion}.

\section{Preliminaries of Tensor and Probability}\label{sec:Preliminaries of Probability and Tensors}

In this section, we will provide a brief introduction of tensors and probability required for our future theory development. More detailed exposition about tensors can be found at~\cite{MR3660696,MR3791481}. In~\cite{MR559726}, it introduces those basic concepts about probability and moment-generating function. 

\subsection{Fundamental of Tensor}\label{sec:Fundamental of Tensors}


\subsubsection{Tensor Notations}\label{sec:Tensor Notations}

Throughout this work, scalars are represented by lower-case letters (e.g., $d$, $e$, $f$, $\ldots$), vectors by boldfaced lower-case letters (e.g., $\mathbf{d}$, $\mathbf{e}$, $\mathbf{f}$, $\ldots$), matrices by boldfaced capitalized letters (e.g., $\mathbf{D}$, $\mathbf{E}$, $\mathbf{F}$, $\ldots$), and tensors by calligraphic letters (e.g., $\mathcal{D}$, $\mathcal{E}$, $\mathcal{F}$, $\ldots$), respectively. Tensors are multiarrays of values which are higher-dimensional generalizations from vectors and matrices. Given a positive integer $N$, let $[N] \define \{1, 2, \cdots ,N\}$. An \emph{order-$N$ tensor} (or \emph{$N$-th order tensor}) denoted by $\mathcal{X} \define (a_{i_1, i_2, \cdots, i_N})$, where $1 \leq i_j = 1, 2, \ldots, I_j$ for $j \in [N]$, is a multidimensional array containing $I_1 \times I_2 \times \cdots \times I_{N}$ entries. Let $\mathbb{C}^{I_1 \times \cdots \times I_N}$ and $\mathbb{R}^{I_1 \times \cdots \times I_N}$ be the sets of the order-$N$ $I_1 \times \cdots \times I_N$ tensors over the complex field $\mathbb{C}$ and the real field $\mathbb{R}$, respectively. For example, $\mathcal{X} \in \mathbb{C}^{I_1 \times \cdots \times I_N}$ is an order-$N$ multiarray, where the first, second, ..., and $N$-th dimensions have $I_1$, $I_2$, $\ldots$, and $I_N$ entries, respectively. Thus, each entry of $\mathcal{X}$ can be represented by $a_{i_1, \cdots, i_N}$. For example, when $N = 3$, $\mathcal{X} \in \mathbb{C}^{I_1 \times I_2 \times I_3}$ is a third-order tensor containing entries $a_{i_1, i_2, i_3}$'s.

Without loss of generality, one can partition the dimensions of a tensor into two groups, say $M$ and $N$ dimensions, separately. Thus, for two order-($M$+$N$) tensors: $\mathcal{X} \define (a_{i_1, \cdots, i_M, j_1, \cdots,j_N}) \in \mathbb{C}^{I_1 \times \cdots \times I_M\times
J_1 \times \cdots \times J_N}$ and $\mathcal{Y} \define (b_{i_1, \cdots, i_M, j_1, \cdots,j_N}) \in \mathbb{C}^{I_1 \times \cdots \times I_M\times
J_1 \times \cdots \times J_N}$, according to~\cite{MR3913666}, the \emph{tensor addition} $\mathcal{X} + \mathcal{Y}\in \mathbb{C}^{I_1 \times \cdots \times I_M\times
J_1 \times \cdots \times J_N}$ is given by 
\begin{eqnarray}\label{eq: tensor addition definition}
(\mathcal{X} + \mathcal{Y} )_{i_1, \cdots, i_M, j_1 \times \cdots \times j_N} &\define&
 a_{i_1, \cdots, i_M, j_1 \times \cdots \times j_N} \nonumber \\
& &+ b_{i_1, \cdots, i_M, j_1 \times \cdots \times j_N}. 
\end{eqnarray}
On the other hand, for tensors $\mathcal{X} \define (a_{i_1, \cdots, i_M, j_1, \cdots,j_N}) \in \mathbb{C}^{I_1 \times \cdots \times I_M\times
J_1 \times \cdots \times J_N}$ and $\mathcal{Y} \define (b_{j_1, \cdots, j_N, k_1, \cdots,k_L}) \in \mathbb{C}^{J_1 \times \cdots \times J_N\times K_1 \times \cdots \times K_L}$, according to~\cite{MR3913666}, the \emph{Einstein product} (or simply referred to as \emph{tensor product} in this work) $\mathcal{X} \star_{N} \mathcal{Y} \in  \mathbb{C}^{I_1 \times \cdots \times I_M\times
K_1 \times \cdots \times K_L}$ is given by 
\begin{eqnarray}\label{eq: Einstein product definition}
\lefteqn{(\mathcal{X} \star_{N} \mathcal{Y} )_{i_1, \cdots, i_M,k_1 \times \cdots \times k_L} \define} \nonumber \\ &&\sum\limits_{j_1, \cdots, j_N} a_{i_1, \cdots, i_M, j_1, \cdots,j_N}b_{j_1, \cdots, j_N, k_1, \cdots,k_L}. 
\end{eqnarray}
Note that we will often abbreviate a tensor product $\mathcal{X} \star_{N} \mathcal{Y}$ to ``$\mathcal{X} \hspace{0.05cm}\mathcal{Y}$'' for notational simplicity in the rest of the paper. 
This tensor product will be reduced to the standard matrix multiplication as $L$ $=$ $M$ $=$ $N$ $=$ $1$. Other simplified situations can also be extended as tensor–vector product ($M >1$, $N=1$, and $L=0$) and tensor–matrix product ($M>1$ and $N=L=1$). In analogy to matrix analysis, we define some basic tensors and elementary tensor operations as follows. 

\begin{definition}\label{def: zero tensor}
A tensor whose entries are all zero is called a \emph{zero tensor}, denoted by $\mathcal{O}$. 
\end{definition}

\begin{definition}\label{def: identity tensor}
An \emph{identity tensor} $\mathcal{I} \in  \mathbb{C}^{I_1 \times \cdots \times I_N\times
J_1 \times \cdots \times J_N}$ is defined by 
\begin{eqnarray}\label{eq: identity tensor definition}
(\mathcal{I})_{i_1 \times \cdots \times i_N\times
j_1 \times \cdots \times j_N} \define \prod_{k = 1}^{N} \delta_{i_k, j_k},
\end{eqnarray}
where $\delta_{i_k, j_k} \define 1$ if $i_k  = j_k$; otherwise $\delta_{i_k, j_k} \define 0$.
\end{definition}
In order to define \emph{Hermitian} tensor, the \emph{conjugate transpose operation} (or \emph{Hermitian adjoint}) of a tensor is specified as follows.  
\begin{definition}\label{def: tensor conjugate transpose}
Given a tensor $\mathcal{X} \define (a_{i_1, \cdots, i_M, j_1, \cdots,j_N}) \in \mathbb{C}^{I_1 \times \cdots \times I_M\times J_1 \times \cdots \times J_N}$, its conjugate transpose, denoted by
$\mathcal{X}^{H}$, is defined by
\begin{eqnarray}\label{eq:tensor conjugate transpose definition}
(\mathcal{X}^H)_{ j_1, \cdots,j_N,i_1, \cdots, i_M}  \define  
\overline{a_{i_1, \cdots, i_M,j_1, \cdots,j_N}},
\end{eqnarray}
where the overline notion indicates the complex conjugate of the complex number $a_{i_1, \cdots, i_M,j_1, \cdots,j_N}$. If a tensor $\mathcal{X}$ satisfies $ \mathcal{X}^H = \mathcal{X}$, then $\mathcal{X}$ is a \emph{Hermitian tensor}. 
\end{definition}
\begin{definition}\label{def: unitary tensor}
Given a tensor $\mathcal{U} \define (u_{i_1, \cdots, i_M, i_1, \cdots,i_M}) \in \mathbb{C}^{I_1 \times \cdots \times I_M\times I_1 \times \cdots \times I_M}$, if
\begin{eqnarray}\label{eq:unitary tensor definition}
\mathcal{U}^H \star_M \mathcal{U} = \mathcal{U} \star_M \mathcal{U}^H = \mathcal{I} \in \mathbb{C}^{I_1 \times \cdots \times I_M\times I_1 \times \cdots \times I_M},
\end{eqnarray}
then $\mathcal{U}$ is a \emph{unitary tensor}. 
\end{definition}
In this work, the symbol $\mathcal{U}$ is reserved for a unitary tensor. 

\begin{definition}\label{def: inverse of a tensor}
Given a \emph{square tensor} $\mathcal{X} \define (a_{i_1, \cdots, i_M, j_1, \cdots,j_M}) \in \mathbb{C}^{I_1 \times \cdots \times I_M\times I_1 \times \cdots \times I_M}$, if there exists $\mathcal{X} \in \mathbb{C}^{I_1 \times \cdots \times I_M\times I_1 \times \cdots \times I_M}$ such that 
\begin{eqnarray}\label{eq:tensor invertible definition}
\mathcal{X} \star_M \mathcal{X} = \mathcal{X} \star_M \mathcal{X} = \mathcal{I},
\end{eqnarray}
then $\mathcal{X}$ is the \emph{inverse} of $\mathcal{X}$. We usually write $\mathcal{X} \define \mathcal{X}^{-1}$ thereby. 
\end{definition}

We also list other crucial tensor operations here. The \emph{trace} of a square tensor is equivalent to the summation of all diagonal entries such that 
\begin{eqnarray}\label{eq: tensor trace def}
\mathrm{Tr}(\mathcal{X}) \define \sum\limits_{1 \leq i_j \leq I_j,\hspace{0.05cm}j \in [M]} \mathcal{X}_{i_1, \cdots, i_M,i_1, \cdots, i_M}.
\end{eqnarray}
The \emph{inner product} of two tensors $\mathcal{X}$, $\mathcal{Y} \in \mathbb{C}^{I_1 \times \cdots \times I_M\times J_1 \times \cdots \times J_N}$ is given by 
\begin{eqnarray}\label{eq: tensor inner product def}
\langle \mathcal{X}, \mathcal{Y} \rangle \define \mathrm{Tr}\left(\mathcal{X}^H \star_M \mathcal{Y}\right).
\end{eqnarray}
According to Eq.~\eqref{eq: tensor inner product def}, the \emph{Frobenius norm} of a tensor $\mathcal{X}$ is defined by 
\begin{eqnarray}\label{eq:Frobenius norm}
\left\Vert \mathcal{X} \right\Vert \define \sqrt{\langle \mathcal{X}, \mathcal{X} \rangle}.
\end{eqnarray}

We use $\lambda_{\min}$ and $\lambda_{\max}$ to represent the minimum and the maximum eigenvalues of a Hermitian tensor. The notation $\succeq$ is used to indicate the semidefinite ordering of tensors. If we have $\mathcal{X} \succeq \mathcal{Y}$, this means that the difference tensor $\mathcal{X} - \mathcal{Y}$ is a positive semidefinite tensor.

\subsubsection{Tensor Functions}\label{sec:Tensor Functions}

Given a function $g: \mathbb{R} \rightarrow \mathbb{R}$, the mapping result of a diagonal tensor by the function $g$ is to obtain another same size diagonal tensor with diagonal entry mapped by the function $g$. Then, the function $g$ can be extended to allow a Hermitian tensor $\mathcal{X} \in \mathbb{C}^{I_1 \times \cdots \times I_M\times I_1 \times \cdots \times I_M}$ as an input argument as
\begin{eqnarray}\label{eq:tensor func def}
g(\mathcal{X}) \define \mathcal{U} \star_M g(\Lambda) \star_M \mathcal{U}^H,~~\mbox{where $\mathcal{X} =  \mathcal{U}\star_M \Lambda \star_M \mathcal{U}^H $.}
\end{eqnarray}
The \emph{spectral mapping theorem} asserts that each eigenvalue of $g(\mathcal{X})$ is equal to $g(\lambda)$ for some eigenvalue $\lambda$ of $\mathcal{X}$. From the semidefinite ordering of tensors, we also have 
\begin{eqnarray}\label{eq:tensor psd ordering}
f(x) \geq g(x),~~\mbox{for $x \in [a, b]$}~~~ \Rightarrow ~~~ f(\mathcal{X})  \succeq 
 g(\mathcal{X}),~~\mbox{for eigenvalues of $\mathcal{X} \in [a, b]$;}
\end{eqnarray}
where $[a, b]$ is a real interval. 

\begin{definition}\label{def: tensor exponential}
Given a square tensor $\mathcal{X} \in \mathbb{C}^{I_1 \times \cdots \times I_M\times I_1 \times \cdots \times I_M}$, the \emph{tensor exponential} of the tensor $\mathcal{X}$ is defined as 
\begin{eqnarray}\label{eq: tensor exp def}
e^{\mathcal{X}} \define \sum\limits_{k=0}^{\infty} \frac{\mathcal{X}^{k}}{k !}, 
\end{eqnarray}
where $\mathcal{X}^0$ is defined as the identity tensor $\mathcal{I} \in \mathbb{C}^{I_1 \times \cdots \times I_M\times I_1 \times \cdots \times I_M}$ and \\
$\mathcal{X}^{k} = \underbrace{\mathcal{X} \star_M \mathcal{X} \star_M \dots \star_M\mathcal{X} }_{\mbox{$k$ terms of $\mathcal{X}$}} $.

Given a tensor $\mathcal{Y}$, the tensor $\mathcal{X}$ is said to be a \emph{tensor logarithm} of $\mathcal{Y}$ if $e^{\mathcal{X}}  = \mathcal{Y}$
\end{definition}

Several facts are about tensor exponential. First, the exponential of a Hermitian tensor is always positive-definite due to the spectral mapping theorem. Second, the trace exponential function, $\mathcal{X} \rightarrow \mathrm{Tr} \exp(\mathcal{X})$, is convex. Third, the trace exponential function follows monotone property with respect to semidefinite ordering as
\begin{eqnarray}\label{eq:trace exp monotone}
\mathcal{X} \succeq \mathcal{Y} ~~~ \Rightarrow ~~~ \mathrm{Tr} \exp(\mathcal{X}) \geq  \mathrm{Tr} \exp(\mathcal{Y}).
\end{eqnarray}
%
%
However, different from exponential rules for scalers, the tensor exponential does not convert sums into products. The Golden-Thompson inequality for tensors~\cite{2020arXiv201002152C} shows the following relationship
\begin{eqnarray}
\mathrm{Tr}e^{\mathcal{X} + \mathcal{Y}} \leq \mathrm{Tr} \left( e^{\mathcal{X} \star_M \mathcal{Y} }\right), 
\end{eqnarray}
where $\mathcal{X}, \mathcal{Y} \in \mathbb{C}^{I_1 \times \cdots \times I_M\times I_1 \times \cdots \times I_M}$ are Hermitian tensors.

For the tensor logarithm, we have the following monotone relation 
\begin{eqnarray}\label{eq:log monotone}
\mathcal{X} \succeq \mathcal{Y} ~~~ \Rightarrow ~~~ \log(\mathcal{X}) \succeq  \log(\mathcal{Y}).
\end{eqnarray}
Moreover, the tensor logarithm is also concave, i.e., we have
\begin{eqnarray}\label{eq:log concave}
t \log (\mathcal{X}_1) + (1-t) \log (\mathcal{X}_2)  \preceq \log( t \mathcal{X}_1 +
(1-t) \mathcal{X}_2 ),
\end{eqnarray}
where $\mathcal{X}_1, \mathcal{X}_2$ are positive-definite tensors and $t \in [0,1]$. The concavity of tensor logarithm can be derived from Hansen-Pedersen Characterizations, see~\cite{MR3426646}.


If a given tensor $\mathcal{Y} \in \mathbb{C}^{I_1 \times \cdots \times I_M\times J_1 \times \cdots \times J_M}$ is not square, we can perform \emph{Hermitian dialation}, represented as $\mathbb{D}$, to the tensor $\mathcal{Y}$ as 
\begin{eqnarray}\label{eq:Hermitian dialation def}
\mathbb{D}(\mathcal{Y}) \define  \begin{bmatrix}
\mathcal{O} & \mathcal{Y} \\
\mathcal{Y}^H & \mathcal{O}  \\
\end{bmatrix}
\end{eqnarray}
where $\mathbb{D}(\mathcal{Y}) \in  \mathbb{C}^{(I_1 + J_1) \times \cdots \times (I_M + J_M) \times (I_1 + J_1) \times \cdots \times (I_M + J_M)}$ is a Hermitian tensor. Since we have 
\begin{eqnarray}
\mathbb{D}(\mathcal{Y})^2 =  \begin{bmatrix}
\mathcal{Y}\mathcal{Y}^H & \mathcal{O}   \\
\mathcal{O} & \mathcal{Y}^H \mathcal{Y}  \\
\end{bmatrix},
\end{eqnarray}
then, we have following spectral norm relation 
\begin{eqnarray}\label{eq:spectral identity}
\lambda_{\max}( \mathbb{D}(\mathcal{Y})  ) = \left\Vert \mathbb{D}(\mathcal{Y})\right\Vert = \left\Vert \mathcal{Y} \right\Vert,
\end{eqnarray}
where $\left \Vert \cdot \right \Vert$ is the spectral norm for a tensor and this will return the maximum singular value of its argument tensor. Hermitian dilation operation enables us to extend results from Hermitian tensors to other non-square tensors. 

\subsection{Tensor Moments and Cumulants}\label{sec:Tensor Moments and Cumulants}

Since the expectation of a random tensor can be treated as convex combination, expectation will preserve the semidefinite order as
\begin{eqnarray}
\mathcal{X} \succ \mathcal{Y} \mbox{~~almost surely} ~~~ \Rightarrow ~~~
\mathbb{E}(\mathcal{X}) \succ \mathbb{E}(\mathcal{Y}).
\end{eqnarray}
From operator Jensen's inequality~\cite{MR649196}, we also have 
\begin{eqnarray}
\mathbb{E}(\mathcal{X}^2) \succeq \left(\mathbb{E}(\mathcal{X})\right)^2.
\end{eqnarray}


Suppose a random Hermitian tensor $\mathcal{X}$ having tensor moments of all orders, i.e., $\mathbb{E}(\mathcal{X}^n)$ existing for all $n$, we can define the tensor moment-generating function, denoted as $\mathbb{M}_{\mathcal{X}}(t)$, and the tensor cumulant-generating function, denoted as $\mathbb{K}_{\mathcal{X}}(t)$, for the tensor $\mathcal{X}$ as 
\begin{eqnarray}\label{eq:def mgf and cgf}
\mathbb{M}_{\mathcal{X}}(t) \define \mathbb{E} e^{t \mathcal{X}}, \mbox{~~and~~~}
\mathbb{K}_{\mathcal{X}}(t) \define \log \mathbb{E} e^{t \mathcal{X}},
\end{eqnarray}
where $t \in \mathbb{R}$. Both the tensor moment-generating function and the tensor cumulant-generating function can be expressed as power series expansions:
\begin{eqnarray}\label{eq:mgf and cgf expans}
\mathbb{M}_{\mathcal{X}}(t) = \mathcal{I} + \sum\limits_{n=1}^{\infty} \frac{t^n}{n !}\mathbb{E}(\mathcal{X}^n), \mbox{~~and~~~}
\mathbb{K}_{\mathcal{X}}(t) = \sum\limits_{n=1}^{\infty} \frac{t^n}{n !} \psi_n,
\end{eqnarray}
where $\psi_n$ is called \emph{tensor cumulant}. The tensor cumulant $\psi_n$ can be expressed as a polynomial in terms of tensor moments up to the order $n$, for example, the first cumulant is the mean and the second cumulant is the variance: 
\begin{eqnarray}\label{eq:cumulant mean and var}
\psi_1 = \mathbb{E}(\mathcal{X}), \mbox{~~and~~~}
\psi_2 = \mathbb{E}(\mathcal{X}^2) - (\mathbb{E}(\mathcal{X}))^2.
\end{eqnarray}

Finally, in this work, we also assume that all random variables are sufficiently regular for us to compute their expectations, interchange limits, etc.

\section{Trace Concavity Method}\label{sec:Trace Concavity Method}

The main purpose of this section is to develop two important tools which will be applied 
intensively in the proof of probability inequalities for the maximum eigenvalue of a sum
of independent random tensors. The first one is the Laplace transform method for tensors discussed in Section~\ref{sec:Laplace Transform Method for Tensors}, and the second one is the tensor trace concavity which will be presented in Section~\ref{sec:Tensor Trace Concavity}. 

\subsection{Laplace Transform Method for Tensors}\label{sec:Laplace Transform Method for Tensors}

We extend the Laplace transform bound from matrices to tensors based on~\cite{MR1966716}. Following lemma is given to establish the Laplace transform bound for tensors.
\begin{lemma}[Laplace Transform Method for Tensors]\label{lma: Laplace Transform Method}
Let $\mathcal{X}$ be a random Hermitian tensor. For $\theta \in \mathbb{R}$, we have
\begin{eqnarray}
\mathbb{P} (\lambda_{\max}(\mathcal{X}) \geq \theta) \leq \inf_{t > 0} \Big\{ e^{-\theta t} \mathbb{E}\mathrm{Tr} e^{t \mathcal{X}} \Big\}
\end{eqnarray}
\end{lemma}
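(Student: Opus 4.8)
The plan is to mimic the classical Laplace transform (Chernoff-style) argument, with the scalar exponential replaced by the tensor exponential and Markov's inequality applied to a scalar obtained via the trace. First I would fix $t > 0$ and observe that the event $\{\lambda_{\max}(\mathcal{X}) \geq \theta\}$ coincides with $\{\lambda_{\max}(t\mathcal{X}) \geq t\theta\}$, since multiplication by a positive scalar scales every eigenvalue by $t$. Applying the spectral mapping theorem to the increasing function $x \mapsto e^{x}$, this event equals $\{\lambda_{\max}(e^{t\mathcal{X}}) \geq e^{t\theta}\}$, because the largest eigenvalue of $e^{t\mathcal{X}}$ is exactly $e^{t\lambda_{\max}(\mathcal{X})}$.

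Next I would pass from the eigenvalue to the trace. Since $e^{t\mathcal{X}}$ is a Hermitian (indeed positive-definite) tensor, all of its eigenvalues are positive, so $\lambda_{\max}(e^{t\mathcal{X}}) \leq \mathrm{Tr}\, e^{t\mathcal{X}}$; this is the tensor analogue of the bound ``largest eigenvalue $\leq$ sum of eigenvalues'' for a positive-semidefinite object, and it follows from the eigenvalue decomposition together with the definition of the trace in Eq.~\eqref{eq: tensor trace def}. Therefore
\begin{eqnarray}
\mathbb{P}(\lambda_{\max}(\mathcal{X}) \geq \theta) = \mathbb{P}\left(\lambda_{\max}(e^{t\mathcal{X}}) \geq e^{t\theta}\right) \leq \mathbb{P}\left(\mathrm{Tr}\, e^{t\mathcal{X}} \geq e^{t\theta}\right).
\end{eqnarray}
Now $\mathrm{Tr}\, e^{t\mathcal{X}}$ is a nonnegative real-valued random variable, so Markov's inequality gives $\mathbb{P}(\mathrm{Tr}\, e^{t\mathcal{X}} \geq e^{t\theta}) \leq e^{-t\theta}\, \mathbb{E}\,\mathrm{Tr}\, e^{t\mathcal{X}}$. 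Since $t > 0$ was arbitrary, taking the infimum over $t > 0$ yields the claim.

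The only genuinely delicate point — and the step I would be most careful about — is justifying $\lambda_{\max}(e^{t\mathcal{X}}) \le \mathrm{Tr}\, e^{t\mathcal{X}}$ and, before that, the identity $\lambda_{\max}(e^{t\mathcal{X}}) = e^{t\lambda_{\max}(\mathcal{X})}$ in the tensor setting; both rest on the eigenvalue decomposition $\mathcal{X} = \mathcal{U}\star_M \Lambda \star_M \mathcal{U}^H$ and the definition of tensor functions in Eq.~\eqref{eq:tensor func def}, so I would invoke the spectral mapping theorem stated in the excerpt together with the fact that the exponential of a Hermitian tensor is positive-definite. Everything else is routine: the scaling of eigenvalues under $t > 0$, Markov's inequality for the scalar random variable $\mathrm{Tr}\, e^{t\mathcal{X}}$, and the final infimum. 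I would also note in passing that the bound is vacuous (the right-hand side may exceed $1$) unless $t$ is chosen appropriately, which is exactly why the infimum is taken rather than a single value of $t$.
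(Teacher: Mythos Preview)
Your proof is correct and essentially matches the paper's argument: fix $t>0$, use positive homogeneity of $\lambda_{\max}$ and the spectral mapping theorem to reach $e^{\lambda_{\max}(t\mathcal{X})}=\lambda_{\max}(e^{t\mathcal{X}})\le \mathrm{Tr}\,e^{t\mathcal{X}}$, apply Markov's inequality, and optimize over $t$. The only cosmetic difference is ordering---the paper applies Markov to the scalar $e^{\lambda_{\max}(t\mathcal{X})}$ first and then bounds by the trace, whereas you bound by the trace first and then apply Markov---but the content is identical.
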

\begin{proof}
Given a fix value $t$, we have 
\begin{eqnarray}\label{eq1: lma: Laplace Transform Method}
\mathbb{P} (\lambda_{\max}(\mathcal{X}) \geq \theta) = \mathbb{P} (\lambda_{\max}(t\mathcal{X}) \geq t \theta ) = \mathbb{P} (e^{ \lambda_{\max}(t\mathcal{X})  } \geq e^{ t \theta}  ) \leq e^{-t \theta} \mathbb{E} e^{ \lambda_{\max}(t\mathcal{X}) }. 
\end{eqnarray}
The first equality uses the homogeneity of the maximum eigenvalue map, the second equality comes from the monotonicity of the scalar exponential function, and the last relation is Markov's inequality. Because we have
\begin{eqnarray}\label{eq2: lma: Laplace Transform Method}
e^{\lambda_{\max} (t \mathcal{Y})} = \lambda_{\max}(e^{t \mathcal{Y}}) \leq  \mathrm{Tr} e^{t \mathcal{Y}},
\end{eqnarray}
where the first equality used the spectral mapping theorem, and the inequality holds because the exponential of an Hermitian tensor is positive definite and the maximum eigenvalue of a positive definite tensor is dominated by the trace~\cite{MR3913666}. From Eqs~\eqref{eq1: lma: Laplace Transform Method} and~\eqref{eq2: lma: Laplace Transform Method}, this lemma is established.
\end{proof}

The Lemma~\ref{lma: Laplace Transform Method} helps us to control the tail probabilities for the maximum eigenvalue of a random Hermitian tensor by utilizing a bound for the traces of the tensor moment-generating function introduced in Section~\ref{sec:Tensor Moments and Cumulants}.

\subsection{Tensor Trace Concavity}\label{sec:Tensor Trace Concavity}


In this section, we will extend Lieb’s concavity theorem to tensors and we begin with the definition about the relative entropy between two tensors. 

\begin{definition}\label{def: relative entropy for tensors}
Given two positive definite tensors $\mathcal{A} \in \mathbb{C}^{I_1 \times \cdots \times I_M \times I_1 \times \cdots \times I_M}$ and tensor $\mathcal{B} \in \mathbb{C}^{I_1 \times \cdots \times I_M \times I_1 \times \cdots \times I_M}$. The relative entropy between tensors $\mathcal{A}$ and  $\mathcal{B}$ is defined as 
\begin{eqnarray}
D( \mathcal{A} \parallel \mathcal{B}) \define \mathrm{Tr} \mathcal{A} \star_M (\log \mathcal{A} - \log \mathcal{B}).
\end{eqnarray} 
\end{definition}

Given a continuous function defined over a real interval as $f: [a, b] \rightarrow \mathbb{R}$, and $\mathcal{T} \in \mathbb{C}^{I_1 \times \cdots \times I_M \times I_1 \times \cdots \times I_M}$ as a Hermitian tensor with spectrum in $[a, b]$ and spectrum decomposition as $\mathcal{T} = \sum\limits_{\lambda_n} \lambda_n \mathcal{U}_{\lambda_n}$, where $\mathcal{U}_{\lambda_n} \in \mathbb{C}^{I_1 \times \cdots \times I_M \times I_1 \times \cdots \times I_M}$ are mutually orthogonal tensors, then the mapping for the tensor $\mathcal{T}$ by $f$ can be defined as $f(\mathcal{T}) = \sum\limits_{\lambda_n} f(\lambda_n) \mathcal{U}_{\lambda_n}$. The function $f$ is called as a \emph{tensor convex} function if $f(\lambda_n)$ is convex on the Hermitian tenor with spectrum in $[a,b]$. We apply \emph{perspective function}~\cite{MR2475796} notion for tensor convex and introduce the following lemma about the convexity of a tensor convex function. 
\begin{lemma}\label{lma:perspective func}
Given $f$ as a tensor convex function, two commuting tensors $\mathcal{X}, \mathcal{Y} \in$ \\ $\mathbb{C}^{I_1 \times \cdots \times I_M \times I_1 \times \cdots \times I_M}$, i.e., $\mathcal{X}\star_M \mathcal{Y} = \mathcal{Y}\star_M \mathcal{X}$, and the existence of the $\mathcal{Y}^{-1}$, then the following map $h$ 
\begin{eqnarray}\label{eq:lma:perspective func}
h (\mathcal{X}, \mathcal{Y}) = f(\mathcal{X} \star_M \mathcal{Y}^{-1}) \star_M \mathcal{Y}
\end{eqnarray}
is jointly convex in the sense that, given $t \in [0, 1]$, if $\mathcal{X} = t \mathcal{X}_1 + (1-t) \mathcal{X}_2$ and $\mathcal{Y} = t \mathcal{Y}_1 + (1-t) \mathcal{Y}_2$ with $\mathcal{X}_1 \star_M \mathcal{Y}_1 = \mathcal{Y}_1 \star_M \mathcal{X}_1$ and $\mathcal{X}_2 \star_M \mathcal{Y}_2 = \mathcal{Y}_2 \star_M \mathcal{X}_2$, we should have 
\begin{eqnarray}
h (\mathcal{X}, \mathcal{Y})  \leq t h (\mathcal{X}_1, \mathcal{Y}_1) + (1-t) h (\mathcal{X}_2, \mathcal{Y}_2). 
\end{eqnarray}
\end{lemma}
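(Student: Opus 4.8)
The plan is to reduce the joint convexity of the tensor perspective map $h(\mathcal{X},\mathcal{Y}) = f(\mathcal{X}\star_M\mathcal{Y}^{-1})\star_M\mathcal{Y}$ to the classical (matrix/operator) statement about perspective functions, which is available via the Hansen--Pedersen characterization already invoked in the excerpt. The key structural observation is that the Einstein product $\star_M$ on $\mathbb{C}^{I_1\times\cdots\times I_M\times I_1\times\cdots\times I_M}$ is, after a fixed reindexing of the $M$ ``row'' multi-indices into a single index of size $\mathbb{I}_1^M=\prod_{m=1}^M I_m$ and likewise for the ``column'' multi-indices, exactly ordinary matrix multiplication; the identity tensor $\mathcal{I}$ corresponds to the identity matrix, the conjugate transpose $(\cdot)^H$ to the matrix adjoint, and semidefinite ordering, eigenvalue decomposition, tensor functions $f(\mathcal{T})$, and inverses all correspond to their matrix counterparts under this isometric $*$-algebra isomorphism. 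So first I would make this flattening map precise and record that it intertwines every operation appearing in the statement.

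Second, once we are in the matrix picture, the hypotheses translate verbatim: $\mathcal{X}\leftrightarrow X$, $\mathcal{Y}\leftrightarrow Y$ Hermitian with $Y$ invertible (and, in the joint-convexity hypothesis, positive definite so that the perspective is well-defined), $XY=YX$, and $f$ operator convex on an interval containing the spectra. The map $X,Y\mapsto f(XY^{-1})Y$ (the noncommutative perspective, or in the commuting case simply $Y^{1/2}f(Y^{-1/2}XY^{-1/2})Y^{1/2}$, which coincides with $f(XY^{-1})Y$ since $X$ and $Y$ commute) is jointly operator convex whenever $f$ is operator convex — this is the perspective-function result of Effros (cf.~\cite{MR2475796}) and follows from the Hansen--Pedersen--Jensen characterization of operator convexity. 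Third, I would pull the matrix inequality
\begin{eqnarray}
f(XY^{-1})Y \;\preceq\; t\,f(X_1Y_1^{-1})Y_1 + (1-t)\,f(X_2Y_2^{-1})Y_2
\end{eqnarray}
back through the inverse of the flattening isomorphism (which preserves $\preceq$) to obtain exactly
\begin{eqnarray}
h(\mathcal{X},\mathcal{Y}) \;\leq\; t\,h(\mathcal{X}_1,\mathcal{Y}_1) + (1-t)\,h(\mathcal{X}_2,\mathcal{Y}_2),
\end{eqnarray}
which is the assertion.

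The main obstacle is not any hard analysis but making sure the bookkeeping around the commutativity hypotheses is consistent. In particular one must check that the ``convex combination'' hypotheses $\mathcal{X}=t\mathcal{X}_1+(1-t)\mathcal{X}_2$, $\mathcal{Y}=t\mathcal{Y}_1+(1-t)\mathcal{Y}_2$ together with $\mathcal{X}_1\star_M\mathcal{Y}_1=\mathcal{Y}_1\star_M\mathcal{X}_1$ and $\mathcal{X}_2\star_M\mathcal{Y}_2=\mathcal{Y}_2\star_M\mathcal{X}_2$ are enough to make each term $f(\mathcal{X}_j\star_M\mathcal{Y}_j^{-1})\star_M\mathcal{Y}_j$ well-defined and Hermitian, and that $f$ being ``tensor convex'' in the sense defined just before the lemma is exactly operator convexity on the relevant interval under the flattening — so that Effros's theorem applies off the shelf. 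A secondary point to handle carefully is that $f(XY^{-1})Y$ need only be interpreted via the functional calculus of the Hermitian operator $Y^{-1/2}XY^{-1/2}$; in the commuting case this reconciliation is immediate, and I would state it as a short preliminary computation rather than belabor it. Once these identifications are in place, the proof is just: flatten, apply the operator perspective theorem, unflatten.
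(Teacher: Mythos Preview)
Your proposal is correct, but it takes a genuinely different route from the paper. You reduce to the matrix case via the flattening/unfolding $*$-algebra isomorphism between $(\mathbb{C}^{I_1\times\cdots\times I_M\times I_1\times\cdots\times I_M},\star_M)$ and $(\mathbb{C}^{\mathbb{I}_1^M\times\mathbb{I}_1^M},\cdot)$, and then invoke Effros's perspective theorem as a black box. The paper instead reproduces Effros's argument directly in the tensor setting: it sets $\mathcal{A}=(t\mathcal{Y}_1)^{1/2}\star_M\mathcal{Y}^{-1/2}$ and $\mathcal{B}=((1-t)\mathcal{Y}_2)^{1/2}\star_M\mathcal{Y}^{-1/2}$, checks that $\mathcal{A}^H\star_M\mathcal{A}+\mathcal{B}^H\star_M\mathcal{B}=\mathcal{I}$, rewrites $h(\mathcal{X},\mathcal{Y})=\mathcal{Y}^{1/2}\star_M f(\mathcal{Y}^{-1/2}\star_M\mathcal{X}\star_M\mathcal{Y}^{-1/2})\star_M\mathcal{Y}^{1/2}$, and then applies the Hansen--Pedersen--Jensen inequality (Theorem~2.1 of~\cite{MR2475796}) to the pair $(\mathcal{A},\mathcal{B})$ to obtain the desired inequality.

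Your approach is shorter and makes transparent that the lemma is nothing more than the matrix statement transported through a fixed isomorphism; it also cleanly isolates the single nontrivial analytic input (operator convexity $\Rightarrow$ joint convexity of the perspective). The paper's approach is more explicit about the mechanism and avoids formally introducing the unfolding map, at the cost of repeating a known computation. Either way the substance is the same Hansen--Pedersen--Jensen step; your bookkeeping remarks about commutativity and well-definedness are appropriate and would need to be stated in the paper's version as well (the paper implicitly uses them when forming $\mathcal{Y}^{1/2}$, $\mathcal{Y}_j^{-1}$, etc.).
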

\begin{proof}
Constructing tensors $\mathcal{A} = (t \mathcal{Y}_1)^{1/2} \star_M \mathcal{Y}^{-1/2}$
and $\mathcal{B} = ( (1-t)\mathcal{Y}_2)^{1/2} \star_M \mathcal{Y}^{-1/2}$, then we have 
\begin{eqnarray}\label{eq:lma:perspective func 1}
\mathcal{A}^H \star_M \mathcal{A} + \mathcal{B}^H \star_M \mathcal{B} = \mathcal{I}
\end{eqnarray}

Since we have 
\begin{eqnarray}
h(\mathcal{X}, \mathcal{Y}) &=& f(\mathcal{X} \star_M \mathcal{Y}^{-1} ) \star_M \mathcal{Y} \nonumber \\
&=&   \mathcal{Y}^{1/2} \star_M f(  \mathcal{Y}^{-1/2} \star_M \mathcal{X} \star_M \mathcal{Y}^{-1/2} ) \star_M \mathcal{Y}^{1/2} \nonumber \\
&=&  \mathcal{Y}^{1/2} \star_M f( \mathcal{A}^H \star_M \mathcal{X}_1 \star_M \mathcal{Y}^{-1}_1 \star_M \mathcal{A} +  \mathcal{B}^H \star_M  \mathcal{X}_2 \star_M \mathcal{Y}^{-1}_2  \star_M \mathcal{B} ) \star_M \mathcal{Y}^{1/2} \nonumber \\
&\leq_1& \mathcal{Y}^{1/2} \star_M \left( \mathcal{A}^H \star_M f(\mathcal{X}_1 \star_M \mathcal{Y}^{-1}_1 ) \star_M \mathcal{A} \right. \nonumber \\
&  & \left. +  \mathcal{B}^H \star_M f(\mathcal{X}_2 \star_M \mathcal{Y}^{-1}_2 )  \star_M \mathcal{B} \right) \star_M \mathcal{Y}^{1/2} \nonumber \\
&=&  (t \mathcal{Y}_1)^{1/2}  f(\mathcal{X}_1 \star_M \mathcal{Y}^{-1}_1 )  (t \mathcal{Y}_1)^{1/2} + ((1-t) \mathcal{Y}_2)^{1/2}  f(\mathcal{X}_2 \star_M \mathcal{Y}^{-1}_2 )  ((1-t) \mathcal{Y}_2)^{1/2}   \nonumber \\
&=& t h(\mathcal{X}_1, \mathcal{Y}_1)  + (1 - t) h(\mathcal{X}_2, \mathcal{Y}_2) 
\end{eqnarray}
where $\leq_1$ is based on Hansen-Pedersen-Jensen inequality and the condition provided by Eq.~\eqref{eq:lma:perspective func 1}, see Theorem 2.1 in~\cite{MR2475796}.    
\end{proof}

Following lemma is given to establish the joint convexity property of relative entropy for tensors.
\begin{lemma}[Joint Convexity of Relative Entropy for Tensors]\label{lma:joint conv of rela entropy}
The relative entropy function of two positive-definite tensors  is a jointly convex function. That is 
\begin{eqnarray}\label{eq:lma:joint conv of rela entropy}
\mathbb{D}(t \mathcal{A}_1 + (1 - t) \mathcal{A}_2 \parallel t \mathcal{B}_1 + (1 - t) \mathcal{B}_2) \leq t \mathbb{D}( \mathcal{A}_1 \parallel \mathcal{B}_1 ) +  (1- t) \mathbb{D} (\mathcal{A}_2 \parallel \mathcal{B}_2),
\end{eqnarray} 
where $ t \in [0, 1]$ and all the following four tensors $\mathcal{A}_1$, $\mathcal{B}_1$, $\mathcal{A}_2$ and $\mathcal{B}_2$, are positive definite. 
\end{lemma}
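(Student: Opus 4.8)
The plan is to deduce the joint convexity of the tensor relative entropy $\mathbb{D}(\cdot\parallel\cdot)$ from Lemma~\ref{lma:perspective func} by exhibiting $D(\mathcal{A}\parallel\mathcal{B})$ as (minus) a trace of a perspective function of a suitable tensor-convex map, mimicking the classical Lindblad/Effros argument. The first step is to recall that the scalar function $f(x) = x\log x$ is convex on $(0,\infty)$, hence (by the tensor functional calculus set up in Section~\ref{sec:Tensor Functions} and the notion of tensor convexity introduced just before Lemma~\ref{lma:perspective func}) $f$ is a tensor-convex function. The key algebraic identity I would establish is that, for positive-definite $\mathcal{A},\mathcal{B}$,
\begin{eqnarray}\label{eq:plan identity}
\mathrm{Tr}\, h(\mathcal{A},\mathcal{B}) = -D(\mathcal{A}\parallel\mathcal{B}) \quad\text{with}\quad h(\mathcal{X},\mathcal{Y}) = f(\mathcal{X}\star_M\mathcal{Y}^{-1})\star_M\mathcal{Y},
\end{eqnarray}
at least when $\mathcal{A}$ and $\mathcal{B}$ commute, where $f(x)=-x\log x$ (equivalently working with $x\log x$ and tracking the sign). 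Indeed, when $\mathcal{A}\star_M\mathcal{B} = \mathcal{B}\star_M\mathcal{A}$ one can simultaneously diagonalize, and then $h(\mathcal{A},\mathcal{B}) = (\mathcal{A}\star_M\mathcal{B}^{-1})\log(\mathcal{A}\star_M\mathcal{B}^{-1})\star_M\mathcal{B} = \mathcal{A}\star_M(\log\mathcal{A}-\log\mathcal{B})$, whose trace is exactly $D(\mathcal{A}\parallel\mathcal{B})$.

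The delicate point is that Lemma~\ref{lma:perspective func} is stated only for \emph{commuting} pairs, whereas joint convexity of relative entropy is a genuinely noncommutative statement. So the next step in the plan is to lift to a commuting situation by the standard tensorization trick: form, on a doubled index set, the block-diagonal tensors $\widetilde{\mathcal{A}} = \mathrm{diag}(\mathcal{A}_1,\mathcal{A}_2)$ and $\widetilde{\mathcal{B}} = \mathrm{diag}(\mathcal{B}_1,\mathcal{B}_2)$, or more precisely to work with the pair $\bigl(\mathcal{A}\star_M\mathcal{B}^{-1}\bigr)$ replaced by a left/right multiplication representation on the Einstein-product algebra so that the ``numerator'' and ``denominator'' operators commute. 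Concretely, I would realize the relative entropy via the left-multiplication operator $L_{\mathcal{A}}$ and right-multiplication operator $R_{\mathcal{B}}$ acting on the Hilbert space of tensors with the inner product from Eq.~\eqref{eq: tensor inner product def}; these commute, $D(\mathcal{A}\parallel\mathcal{B}) = \langle \mathcal{A}^{1/2}, (\log L_{\mathcal{A}} - \log R_{\mathcal{B}})\,\mathcal{A}^{1/2}\rangle$, and then apply the commuting perspective-function lemma to $L$ and $R$ on this larger space. If instead one prefers to stay within the tensor algebra as written, the alternative is to prove a noncommuting version of Lemma~\ref{lma:perspective func} first — which is itself essentially Lieb's concavity theorem — but since the paper's stated Lemma~\ref{lma:perspective func} assumes commutativity, the representation-on-a-larger-space route is the honest one and is what I would carry out.

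With the identity \eqref{eq:plan identity} and the commuting realization in hand, the conclusion is immediate: apply Lemma~\ref{lma:perspective func} to the convex map $h$ (with $f(x) = -x\log x$, which is tensor concave, or equivalently track signs with $x\log x$) at the convex combinations $\widetilde{\mathcal{A}} = t\mathcal{A}_1 + (1-t)\mathcal{A}_2$ and $\widetilde{\mathcal{B}} = t\mathcal{B}_1 + (1-t)\mathcal{B}_2$, take $\mathrm{Tr}$ of both sides (the trace is linear and, by Eq.~\eqref{eq:trace exp monotone}-type monotonicity, preserves $\preceq$ as a scalar inequality $\leq$), and read off Eq.~\eqref{eq:lma:joint conv of rela entropy}. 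I would close by noting the boundary/limiting cases: the statement is for positive-definite tensors so $\mathcal{B}^{-1}$, $\log\mathcal{A}$, $\log\mathcal{B}$ all exist, and no regularization is needed.

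The main obstacle I anticipate is precisely the commutativity gap: making rigorous the passage from the commuting perspective lemma to the noncommuting relative entropy, i.e. setting up the left/right multiplication operators on the tensor inner-product space of Eq.~\eqref{eq: tensor inner product def}, checking that $\log$ of those operators reproduces $\mathrm{Tr}\,\mathcal{A}\star_M(\log\mathcal{A}-\log\mathcal{B})$, and verifying that the Hansen-Pedersen-Jensen step inside Lemma~\ref{lma:perspective func} still applies there. Everything else — the spectral calculus identity \eqref{eq:plan identity} in the commuting case, linearity of the trace, and the final substitution — is routine given the tools already assembled in Section~\ref{sec:Trace Concavity Method}.
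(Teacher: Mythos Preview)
Your proposal is correct and follows essentially the same route as the paper: the paper defines the left-multiplication operator $\mathcal{F}(\mathcal{X})=\mathcal{A}\star_M\mathcal{X}$ and the right-multiplication operator $\mathcal{G}(\mathcal{X})=\mathcal{X}\star_M\mathcal{B}$ on the inner-product space of Eq.~\eqref{eq: tensor inner product def}, observes that these commute, applies Lemma~\ref{lma:perspective func} with $f(x)=x\log x$ to this commuting pair, and then reads off $\langle\mathcal{I},\,h(\mathcal{F}(\mathcal{I}),\mathcal{G}(\mathcal{I}))\rangle=\mathrm{Tr}(\mathcal{A}\log\mathcal{A}-\mathcal{A}\log\mathcal{B})=\mathbb{D}(\mathcal{A}\parallel\mathcal{B})$. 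What you flag as the ``main obstacle'' --- the passage to left/right multiplication operators on the tensor Hilbert space to recover a commuting setting --- is precisely the step the paper carries out, so your plan is on target (modulo the harmless sign bookkeeping you already noted).
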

\begin{proof}
From the definition~\ref{def: relative entropy for tensors}, we wish to show the joint convexity of the function $\mathbb{D} (\mathcal{A} \parallel \mathcal{B})$ with respect to the tensors $\mathcal{A}, \mathcal{B} \in \mathbb{C}^{I_1 \times \cdots \times I_M \times I_1 \times \cdots \times I_M}$. Let us define tensor operators $\mathcal{F}(\mathcal{X}) \define \mathcal{A} \star_M \mathcal{X}$ and $\mathcal{G}(\mathcal{X}) \define \mathcal{X} \star_M \mathcal{B}$  for the variable tensor $\mathcal{X}  \in \mathbb{C}^{I_1 \times \cdots \times I_M \times I_1 \times \cdots \times I_M}$. Then, we have $\mathcal{F}(\mathcal{X})$ and $\mathcal{G}(\mathcal{X})$ commuting on the inner product operation $\langle \mathcal{F}(\mathcal{X}), \mathcal{G}(\mathcal{X}) \rangle$ defined by Eq.~\eqref{eq: tensor inner product def}, i.e., $\mathrm{Tr}(\mathcal{F}^H(\mathcal{X})\star_M \mathcal{G}(\mathcal{X}) ) =\mathrm{Tr}(\mathcal{G}^H(\mathcal{X}) \star_M \mathcal{F}(\mathcal{X}) )$. Since the function $f(x) = x \log x$ is tensor convex, we apply Lemma~\ref{lma:perspective func} to operators $\mathcal{F}(~~), \mathcal{G}(~~)$ and the function $h$ definition provided by Eq.~\eqref{eq:lma:perspective func} to obtain the following relation: 
\begin{eqnarray}
\langle \mathcal{I}, h (\mathcal{F}(\mathcal{I}), \mathcal{G}(\mathcal{I}) \rangle &=& 
\langle \mathcal{I},~~\mathcal{G}(\mathcal{I}) \star_M ( \mathcal{F}(\mathcal{I}) \star_M \mathcal{G}^{-1}(\mathcal{I} )) \log ( \mathcal{F}(\mathcal{I} ) \star_M \mathcal{G}^{-1}(\mathcal{I})  )   \rangle \nonumber \\
&=& \langle \mathcal{I}, \mathcal{F}(\mathcal{I}) (\log \mathcal{F}(\mathcal{I}) - \log \mathcal{G}(\mathcal{I}) )   \rangle \nonumber \\
&=& \mathrm{Tr}(\mathcal{A} \log \mathcal{A} - \mathcal{A} \log \mathcal{B} )= \mathbb{D}(\mathcal{A} \parallel \mathcal{B}),
\end{eqnarray}
is jointly convex with respect to tensors $\mathcal{A}$ and $\mathcal{B}$. 
\end{proof}

\begin{theorem}[Lieb's concavity theorem for tensors]\label{thm:Lieb concavity thm}
Let $\mathcal{H}$ be a Hermitian tensor. Following map 
\begin{eqnarray}\label{eq:thm:Lieb concavity thm}
\mathcal{A} \rightarrow \mathrm{Tr}e^{\mathcal{H}+ \log \mathcal{A}}
\end{eqnarray}
is concave on the positive-definite cone.
\end{theorem}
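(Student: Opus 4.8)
The plan is to derive Lieb's concavity theorem for tensors as a corollary of the joint convexity of tensor relative entropy, exactly as in the matrix case, using a variational (Legendre-type) representation of the trace exponential. First I would establish the variational formula
\begin{eqnarray}\label{eq:lieb var formula plan}
\log \mathrm{Tr}\, e^{\mathcal{H} + \log \mathcal{A}} = \sup_{\mathcal{S} \succ \mathcal{O}} \Big\{ \mathrm{Tr}(\mathcal{S} \star_M \mathcal{H}) + \mathrm{Tr}(\mathcal{S} \star_M \log \mathcal{A}) - \mathrm{Tr}(\mathcal{S} \star_M \log \mathcal{S}) \Big\},
\end{eqnarray}
where the supremum runs over positive-definite tensors $\mathcal{S}$ with $\mathrm{Tr}\,\mathcal{S} = 1$. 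This is the tensor analogue of the Gibbs variational principle; it can be proven by the same argument as for matrices, reducing everything to the spectral decomposition via Definition~\ref{def: tensor exponential} and the spectral mapping theorem, with the maximizer given explicitly by $\mathcal{S}^\star = e^{\mathcal{H} + \log \mathcal{A}} / \mathrm{Tr}\, e^{\mathcal{H} + \log \mathcal{A}}$. The nonnegativity of the ``gap'' in this formula is itself a relative-entropy (Klein's inequality type) statement for tensors.

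Next I would rewrite the bracketed expression in Eq.~\eqref{eq:lieb var formula plan} in terms of the tensor relative entropy of Definition~\ref{def: relative entropy for tensors}. Observe that
\begin{eqnarray}
\mathrm{Tr}(\mathcal{S} \star_M \log \mathcal{A}) - \mathrm{Tr}(\mathcal{S} \star_M \log \mathcal{S}) = - \mathrm{Tr}\big(\mathcal{S} \star_M (\log \mathcal{S} - \log \mathcal{A})\big) = - D(\mathcal{S} \parallel \mathcal{A}),
\end{eqnarray}
so that
\begin{eqnarray}
\log \mathrm{Tr}\, e^{\mathcal{H} + \log \mathcal{A}} = \sup_{\mathcal{S}} \Big\{ \mathrm{Tr}(\mathcal{S} \star_M \mathcal{H}) - D(\mathcal{S} \parallel \mathcal{A}) \Big\}.
\end{eqnarray}
For each fixed $\mathcal{S}$, the map $\mathcal{A} \mapsto \mathrm{Tr}(\mathcal{S} \star_M \mathcal{H}) - D(\mathcal{S} \parallel \mathcal{A})$ is concave in $\mathcal{A}$: this is precisely the joint convexity of relative entropy established in Lemma~\ref{lma:joint conv of rela entropy}, restricted to the second argument (partial convexity follows from joint convexity), combined with the minus sign. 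A pointwise supremum of concave functions of $\mathcal{A}$ is concave, so $\mathcal{A} \mapsto \log \mathrm{Tr}\, e^{\mathcal{H} + \log \mathcal{A}}$ is concave; since $x \mapsto e^x$ is increasing and $x \mapsto \log x$... here one must be slightly careful, because concavity of $\log \mathrm{Tr}\, e^{\mathcal{H}+\log\mathcal{A}}$ does not immediately give concavity of $\mathrm{Tr}\, e^{\mathcal{H}+\log\mathcal{A}}$. The cleanest fix is to work with the variational formula for $\mathrm{Tr}\, e^{\mathcal{H}+\log\mathcal{A}}$ itself rather than its logarithm, or to invoke that the supremum representation $\mathrm{Tr}\, e^{\mathcal{H}+\log\mathcal{A}} = \sup_{\mathcal{S}\succ\mathcal{O}}\{\mathrm{Tr}(\mathcal{S}\star_M\mathcal{H}) + \mathrm{Tr}(\mathcal{S}\star_M\log\mathcal{A}) - \mathrm{Tr}(\mathcal{S}\star_M\log\mathcal{S}) + \mathrm{Tr}\,\mathcal{S} - 1\}$ over \emph{all} positive-definite $\mathcal{S}$ (dropping the trace-one constraint) exhibits the map as a supremum of functions each concave in $\mathcal{A}$ (the only $\mathcal{A}$-dependent term being the linear-in-$\log\mathcal{A}$ expression $\mathrm{Tr}(\mathcal{S}\star_M\log\mathcal{A})$, and $\mathcal{A}\mapsto\log\mathcal{A}$ is concave by Eq.~\eqref{eq:log concave} while $\mathcal{S}\succ\mathcal{O}$ makes $\mathrm{Tr}(\mathcal{S}\star_M\,\cdot\,)$ monotone).

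The main obstacle I anticipate is verifying the unconstrained variational formula carefully in the tensor setting: one must check that the Einstein product $\star_M$, the trace defined in Eq.~\eqref{eq: tensor trace def}, and the cyclicity property $\mathrm{Tr}(\mathcal{X}\star_M\mathcal{Y}) = \mathrm{Tr}(\mathcal{Y}\star_M\mathcal{X})$ interact correctly, and that the first-order optimality condition (obtained by differentiating along $\mathcal{S} + \epsilon\,\mathcal{W}$) indeed produces the claimed maximizer $\mathcal{S}^\star$, which requires the tensor derivative of $\mathcal{S}\mapsto\mathrm{Tr}(\mathcal{S}\star_M\log\mathcal{S})$. All of these reduce, via the spectral decomposition $\mathcal{X} = \mathcal{U}\star_M\Lambda\star_M\mathcal{U}^H$ and the unitary invariance of the trace, to the corresponding matrix facts, so no genuinely new inequality is needed beyond Lemma~\ref{lma:joint conv of rela entropy} and Eq.~\eqref{eq:log concave}; the work is bookkeeping. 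Once the variational representation is in place, concavity on the positive-definite cone is immediate since a pointwise supremum of concave maps is concave, completing the proof.
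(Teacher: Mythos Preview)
Your overall strategy---the variational (Gibbs) formula combined with Lemma~\ref{lma:joint conv of rela entropy}---is exactly the paper's route, and your unconstrained variational representation
\[
\mathrm{Tr}\,e^{\mathcal{H}+\log\mathcal{A}}=\sup_{\mathcal{S}\succ\mathcal{O}}\Big\{\mathrm{Tr}(\mathcal{S}\star_M\mathcal{H})-D(\mathcal{S}\parallel\mathcal{A})+\mathrm{Tr}\,\mathcal{S}\Big\}
\]
is the same one the paper derives from Klein's inequality. However, there is a genuine gap in the concluding step. You assert twice that ``a pointwise supremum of concave maps is concave,'' and this is false in general (for instance $\sup\{x,-x\}=|x|$). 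It is true that for each fixed $\mathcal{S}$ the bracketed expression is concave in $\mathcal{A}$, but a supremum over $\mathcal{S}$ does not preserve concavity; what your argument actually establishes, after the substitution $\mathcal{B}=\log\mathcal{A}$, is only that $\mathcal{B}\mapsto\mathrm{Tr}\,e^{\mathcal{H}+\mathcal{B}}$ is convex (a supremum of affine functions of $\mathcal{B}$), which is a different and much weaker statement than concavity in $\mathcal{A}$. Restricting the joint convexity of $D$ to its second argument, as you explicitly do, is therefore not enough.

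The paper's argument uses the \emph{joint} convexity in both slots together with the linearity of $\mathcal{S}\mapsto\mathrm{Tr}(\mathcal{S}\star_M\mathcal{H})+\mathrm{Tr}\,\mathcal{S}$. Take maximizers $\mathcal{S}_1,\mathcal{S}_2$ for $\mathcal{A}_1,\mathcal{A}_2$ and set $\bar{\mathcal{S}}=t\mathcal{S}_1+(1-t)\mathcal{S}_2$. Lemma~\ref{lma:joint conv of rela entropy} gives
\[
D\big(\bar{\mathcal{S}}\,\big\|\,t\mathcal{A}_1+(1-t)\mathcal{A}_2\big)\le tD(\mathcal{S}_1\parallel\mathcal{A}_1)+(1-t)D(\mathcal{S}_2\parallel\mathcal{A}_2),
\]
and linearity of the remaining terms in $\mathcal{S}$ then yields
\[
t\,\mathrm{Tr}\,e^{\mathcal{H}+\log\mathcal{A}_1}+(1-t)\,\mathrm{Tr}\,e^{\mathcal{H}+\log\mathcal{A}_2}\le\big[\text{value at }\bar{\mathcal{S}}\big]\le\mathrm{Tr}\,e^{\mathcal{H}+\log(t\mathcal{A}_1+(1-t)\mathcal{A}_2)}.
\]
You have all the right ingredients; simply replace the ``sup of concave is concave'' sentence with this two-line joint-convexity computation and the proof is complete.
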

\begin{proof}

From Klein's inequality for the map $t \rightarrow t \log t$ (which is strictly concave for $t > 0$) and Hermitian tensors $\mathcal{X}, \mathcal{Y}, $we have 
\begin{eqnarray}
\mathrm{Tr} \mathcal{Y} \geq \mathrm{Tr} \mathcal{X} - \mathrm{Tr} \mathcal{X} \log \mathcal{X} + \mathrm{Tr} \mathcal{X} \log \mathcal{Y}.
\end{eqnarray}
If we replace $\mathcal{Y}$ by $e^{\mathcal{H} + \log \mathcal{A}}$, we then have
\begin{eqnarray}\label{eq:var formula}
\mathrm{Tr} e^{\mathcal{H} + \log \mathcal{A}} = \max\limits_{\mathcal{X} \succ \mathcal{O}} \Big\{ \mathrm{Tr}\mathcal{X} \star \mathcal{H} - \mathbb{D}(\mathcal{X} \parallel \mathcal{A})  + \mathrm{Tr}\mathcal{X} \Big\}
\end{eqnarray}
where $\mathbb{D}(\mathcal{X} \parallel  \mathcal{A})$ is the quantum relative entropy between two tensor operators. For real number $t \in [0, 1]$ and two positive-definite tensors $\mathcal{A}_1, \mathcal{A}_2$, we have 
\begin{eqnarray}
\mathrm{Tr} e^{\mathcal{H} + \log (t \mathcal{A}_1 + (1-t) \mathcal{A}_2 )} &=& 
\max_{\mathcal{X} \succ \mathcal{O}} \Big\{ \mathrm{Tr} \mathcal{X}\mathcal{H} 
- \mathbb{D}(\mathcal{X} \parallel t \mathcal{A}_1 + (1-t) \mathcal{A}_2) + \mathrm{Tr}\mathcal{X} \Big\}  \nonumber \\
&\geq & t \max_{\mathcal{X} \succ \mathcal{O}} \Big\{ \mathrm{Tr} \mathcal{X}\mathcal{H} 
- \mathbb{D}(\mathcal{X} \parallel  t \mathcal{A}_1) 
+ \mathrm{Tr}\mathcal{X} \Big\} \nonumber \\ 
& &+ (1-t) \max_{\mathcal{X} \succ \mathcal{O}} \Big\{ \mathrm{Tr} \mathcal{X}\mathcal{H} 
- \mathbb{D}(\mathcal{X}  \parallel  (1-t) \mathcal{A}_2) + \mathrm{Tr}\mathcal{X} \Big\} \nonumber \\
& = & t \mathrm{Tr}e^{\mathcal{H} + \log \mathcal{A}_1} +  (1-t) \mathrm{Tr}e^{\mathcal{H} + \log \mathcal{A}_2},
\end{eqnarray}
where the first and last equalities are obtained based on the variational formula provided by Eq.~\eqref{eq:var formula}, and the inequality is due to the joint convexity property of the 
relative entropy from Leamm~\ref{lma:joint conv of rela entropy}.
\end{proof}

Based on Lieb's concavity theorem for tensors, we have the following corollary.
\begin{corollary}\label{cor:3.3}
Let $\mathcal{A}$ be a fixed Hermitian tensor, and let $\mathcal{X}$ be a random Hermitian tensor, then we have
\begin{eqnarray}
\mathbb{E} \mathrm{Tr} e^{\mathcal{A} + \mathcal{X}} \leq \mathrm{Tr} e^{\mathcal{A} + \log \left( \mathbb{E} e^{\mathcal{X}} \right) }.
\end{eqnarray}
\end{corollary}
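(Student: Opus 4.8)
The plan is to reduce the statement to Lieb's concavity theorem for tensors (Theorem \ref{thm:Lieb concavity thm}) via Jensen's inequality. First I would observe that the map $\mathcal{Y} \mapsto \mathrm{Tr}\, e^{\mathcal{A} + \log \mathcal{Y}}$ is defined and concave on the positive-definite cone, where concavity is exactly the content of Theorem \ref{thm:Lieb concavity thm} with $\mathcal{H} = \mathcal{A}$. Next I would apply the operator/tensor Jensen inequality for a concave function to the random positive-definite tensor $\mathcal{Y} = e^{\mathcal{X}}$ (which is positive-definite almost surely, since the exponential of a Hermitian tensor is positive-definite by the spectral mapping theorem). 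This yields
\begin{eqnarray}
\mathbb{E}\,\mathrm{Tr}\, e^{\mathcal{A} + \log\left( e^{\mathcal{X}} \right)} \leq \mathrm{Tr}\, e^{\mathcal{A} + \log\left( \mathbb{E}\, e^{\mathcal{X}} \right)}.
\end{eqnarray}

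To finish, I would simplify the left-hand side: since $\mathcal{X}$ is Hermitian, $\log(e^{\mathcal{X}}) = \mathcal{X}$ by the spectral mapping theorem (the eigenvalues of $e^{\mathcal{X}}$ are the $e^{\lambda}$ for eigenvalues $\lambda$ of $\mathcal{X}$, all real, so taking $\log$ recovers $\mathcal{X}$ with the same eigentensors). Hence the left-hand side equals $\mathbb{E}\,\mathrm{Tr}\, e^{\mathcal{A} + \mathcal{X}}$, which gives the claimed inequality. One technical point worth spelling out is that the functional Jensen step is really just finite-dimensional convexity: for any finitely-supported distribution it is immediate from Theorem \ref{thm:Lieb concavity thm} by induction, and the general case follows by the regularity assumption stated at the end of Section \ref{sec:Tensor Moments and Cumulants} (all expectations exist and limits may be interchanged), together with the continuity of $\mathcal{Y} \mapsto \mathrm{Tr}\, e^{\mathcal{A} + \log \mathcal{Y}}$ on the positive-definite cone.

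The main obstacle I anticipate is justifying the passage from the pointwise concavity in Theorem \ref{thm:Lieb concavity thm} to the integral form of Jensen's inequality for a general (not necessarily discrete) random tensor $\mathcal{X}$, and ensuring $\mathbb{E}\, e^{\mathcal{X}}$ is genuinely positive-definite so that $\log(\mathbb{E}\, e^{\mathcal{X}})$ is well-defined; this is handled by noting $e^{\mathcal{X}} \succ \mathcal{O}$ almost surely forces $\mathbb{E}\, e^{\mathcal{X}} \succ \mathcal{O}$ because expectation preserves strict semidefinite order (as recorded in Section \ref{sec:Tensor Moments and Cumulants}). Everything else is a routine application of the spectral mapping theorem.
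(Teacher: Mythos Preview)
Your proposal is correct and follows essentially the same approach as the paper: set $\mathcal{Y} = e^{\mathcal{X}}$, invoke the concavity of $\mathcal{Y} \mapsto \mathrm{Tr}\,e^{\mathcal{A} + \log \mathcal{Y}}$ from Theorem~\ref{thm:Lieb concavity thm}, and apply Jensen's inequality. The paper's proof is terser, but your additional remarks on the spectral mapping identity $\log(e^{\mathcal{X}}) = \mathcal{X}$ and on the well-definedness of $\log(\mathbb{E}\,e^{\mathcal{X}})$ are appropriate refinements rather than a different route.
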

\begin{proof}
Define the random tensor $\mathcal{Y} = e^{\mathcal{X}}$, we have
\begin{eqnarray}
\mathbb{E} \mathrm{Tr} e^{\mathcal{A} + \mathcal{X}} = 
\mathbb{E} \mathrm{Tr} e^{\mathcal{A} + \log \mathcal{Y} } 
\leq \mathrm{Tr} e^{\mathcal{A} + \log\left( \mathbb{E} \mathcal{Y} \right) } 
= \mathrm{Tr} e^{\mathcal{A} + \log \left( \mathbb{E} e^{\mathcal{X}} \right) },
\end{eqnarray}
where the inequality is based on Lieb's concavity theorem for tensors~\ref{thm:Lieb concavity thm} and Jensen's inequality .
\end{proof}

\subsection{Tail Bounds for Independent Sums}\label{sec:Tail Bounds for Independent Sums}

This section will present the tail bound for the sum of independent random tensors and several corollaries according to this tail bound for independent sums. We begin with the subadditivity lemma of tensor cumulant-generating functions.

\begin{lemma}\label{lma:subadditivity of tensor cgfs}
Given a finite sequence of independent Hermitian random tensors $\{ \mathcal{X}_i \}$, we have 
\begin{eqnarray}\label{eq1:lma:subadditivity of tensor cgfs}
\mathbb{E} \mathrm{Tr} \exp\left( \sum\limits_{i=1}^n t \mathcal{X}_i \right) \leq 
\mathrm{Tr} \exp\left( \sum\limits_i^n \log \mathbb{E} e^{t \mathcal{X}_i} \right),~~\mbox{for $t \in \mathbb{R}$.}  
\end{eqnarray}
\end{lemma}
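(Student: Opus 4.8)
The plan is to prove the subadditivity of tensor cumulant-generating functions by induction on the number of summands $n$, with Corollary~\ref{cor:3.3} supplying the inductive step. The base case $n = 1$ is immediate, since it merely asserts $\mathbb{E}\,\mathrm{Tr}\,e^{t\mathcal{X}_1} \leq \mathrm{Tr}\,e^{\log\mathbb{E}e^{t\mathcal{X}_1}} = \mathbb{E}\,\mathrm{Tr}\,e^{t\mathcal{X}_1}$, an equality. For the inductive step, suppose the claim holds for $n-1$ summands. First I would condition on $\mathcal{X}_1,\dots,\mathcal{X}_{n-1}$: by independence, the inner conditional expectation over $\mathcal{X}_n$ can be handled by viewing $\mathcal{A} = \sum_{i=1}^{n-1} t\mathcal{X}_i$ as a (random but, under the conditioning, fixed) Hermitian tensor and applying Corollary~\ref{cor:3.3} with $\mathcal{X}$ replaced by $t\mathcal{X}_n$. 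This yields
\begin{eqnarray}
\mathbb{E}_{\mathcal{X}_n}\mathrm{Tr}\exp\left(\sum_{i=1}^{n-1} t\mathcal{X}_i + t\mathcal{X}_n\right) \leq \mathrm{Tr}\exp\left(\sum_{i=1}^{n-1} t\mathcal{X}_i + \log\mathbb{E}e^{t\mathcal{X}_n}\right).
\end{eqnarray}

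Next I would take the remaining expectation over $\mathcal{X}_1,\dots,\mathcal{X}_{n-1}$ and again invoke Corollary~\ref{cor:3.3}, this time with the fixed Hermitian tensor $\mathcal{A} = \log\mathbb{E}e^{t\mathcal{X}_n}$ and the random Hermitian tensor $\sum_{i=1}^{n-1} t\mathcal{X}_i$. After one more application — or, more cleanly, after peeling off summands one at a time and using the inductive hypothesis on the remaining $n-1$ terms — one arrives at $\mathrm{Tr}\exp\left(\log\mathbb{E}e^{t\mathcal{X}_n} + \sum_{i=1}^{n-1}\log\mathbb{E}e^{t\mathcal{X}_i}\right)$, which is the desired right-hand side. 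The monotonicity of the trace exponential with respect to semidefinite ordering (Eq.~\eqref{eq:trace exp monotone}) together with the monotonicity of the tensor logarithm (Eq.~\eqref{eq:log monotone}) is what guarantees the nested applications compose correctly, since each step replaces a random argument inside $\mathrm{Tr}\exp$ by a deterministic one that dominates it in the appropriate sense after taking logs.

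The main obstacle I anticipate is bookkeeping around the order of conditioning and making sure each invocation of Corollary~\ref{cor:3.3} is legitimate — specifically, that the tensor playing the role of the ``fixed Hermitian tensor $\mathcal{A}$'' really is being held fixed (i.e., measurable with respect to the conditioning $\sigma$-algebra) while the independent factor is integrated out. Independence of the $\{\mathcal{X}_i\}$ is exactly what licenses treating $\mathbb{E}e^{t\mathcal{X}_n}$ as a constant and pulling the partial expectation inside; without it the argument collapses. A secondary technical point is that Corollary~\ref{cor:3.3} is stated for a fixed Hermitian $\mathcal{A}$ and random Hermitian $\mathcal{X}$, so to apply it with the roles effectively swapped (random $\mathcal{A}$, deterministic perturbation) one should first do the $\mathcal{X}_n$-integration with $\mathcal{A}$ random-but-frozen, and only afterward integrate the frozen part — this is why the induction is cleaner than a single simultaneous application. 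Once the inductive scaffolding is set up, the remaining steps are routine applications of the already-established concavity and monotonicity facts.
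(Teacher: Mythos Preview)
Your proposal is correct and takes essentially the same route as the paper: both arguments peel off one summand at a time by iterated conditioning and repeated application of Corollary~\ref{cor:3.3}, with the paper writing the chain $\mathbb{E}_0\cdots\mathbb{E}_{n-1}$ explicitly rather than framing it as an induction. One small caveat: the ``inductive hypothesis on the remaining $n-1$ terms'' does not apply verbatim once the deterministic summand $\log\mathbb{E}e^{t\mathcal{X}_n}$ is present inside the exponential, so the cleaner peeling-one-at-a-time you mention is indeed the right execution (and is exactly what the paper does); the monotonicity facts you cite are not actually needed, since each step is a direct instance of Corollary~\ref{cor:3.3}.
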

\begin{proof}
We first define the following term for the tensor cumulant-generating function for $\mathcal{X}_i$ as:
\begin{eqnarray}
\mathbb{K}_{i}(t) &\define& \log (\mathbb{E} e^{t \mathcal{X}_i}).
\end{eqnarray}
Then, we define the Hermitian tensor $\mathcal{H}_k$ as 
\begin{eqnarray}\label{eq2:lma:subadditivity of tensor cgfs}
\mathcal{H}_k(t) = \sum\limits_{i = 1}^{k-1} t \mathcal{X}_k + \sum\limits_{i = k+1}^n \mathbb{K}_{i}(t).
\end{eqnarray}

By applying Eq.~\eqref{eq2:lma:subadditivity of tensor cgfs} to Theorem~\ref{thm:Lieb concavity thm} repeatedly for $k = 1,2,\cdots,n$, we have 
\begin{eqnarray}
\mathbb{E} \mathrm{Tr} \exp\left( \sum\limits_{i=1}^n t \mathcal{X}_i \right) 
&=_1& \mathbb{E}_0 \cdots \mathbb{E}_{n-1} \mathrm{Tr} \exp\left(\sum\limits_{i=1}^{n-1} t\mathcal{X}_i + t\mathcal{X}_n \right) \nonumber \\
&\leq&  \mathbb{E}_0 \cdots \mathbb{E}_{n-2} \mathrm{Tr} \exp\left( \sum\limits_{i=1}^{n-1}t \mathcal{X}_i + \log\left( \mathbb{E}_{n - 1}e^{t\mathcal{X}_n} \right) \right) \nonumber \\
& = &  \mathbb{E}_0 \cdots \mathbb{E}_{n-2} \mathrm{Tr} \exp\left( \sum\limits_{i=1}^{n-2} t \mathcal{X}_i + t \mathcal{X}_{n-1} +\mathbb{K}_{n}(t).  \right) \nonumber \\
& \leq &  \mathbb{E}_0 \cdots \mathbb{E}_{n-3} \mathrm{Tr} \exp\left( \sum\limits_{i=1}^{n-2} t \mathcal{X}_i +  \mathbb{K}_{n-1}(t) + \mathbb{K}_{n}(t)  \right) \nonumber \\
\cdots 
& \leq & \mathrm{Tr} \exp\left( \sum\limits_{i = 1}^{n}  \mathbb{K}_{i}(t)  \right)
\end{eqnarray}
where the equality $=_1$ is based on the law of total expectation by defining $\mathbb{E}_i$ as the conditional expectation given $\mathcal{X}_1, \cdots, \mathcal{X}_i$.
\end{proof}

We are ready to present the theorem for the tail bound of independent sums. 
\begin{theorem}[Master Tail Bound for Independent Sum of Random Tensors]\label{thm:Master Tail Bound for Independent Sum of Random Tensors}
Given a finite sequence of independent Hermitian random tensors $\{ \mathcal{X}_i \}$, we have 
\begin{eqnarray}\label{eq1:thm:master tail bound}
\mathrm{Pr} \left( \lambda_{\max} (\sum\limits_{i=1}^n \mathcal{X}_i) \geq \theta \right)
& \leq & \inf\limits_{t > 0} \Big\{ e^{- t \theta} \mathrm{Tr}\exp \left( \sum\limits_{i=1}^{n} \log \mathbb{E} e^{t \mathcal{X}_i}  \right) \Big\}. 
\end{eqnarray}
\end{theorem}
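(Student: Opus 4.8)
The plan is to obtain the bound by composing the two lemmas already proved in this section, so that essentially no new work is needed. First I would apply Lemma~\ref{lma: Laplace Transform Method} to the random Hermitian tensor $\mathcal{X} \define \sum_{i=1}^n \mathcal{X}_i$; this is admissible because a finite sum of Hermitian tensors is again Hermitian. The lemma then gives, for every $t > 0$,
\[
\mathrm{Pr}\left( \lambda_{\max}\left( \sum_{i=1}^n \mathcal{X}_i \right) \geq \theta \right) \leq e^{-t\theta}\, \mathbb{E}\,\mathrm{Tr}\exp\left( t \sum_{i=1}^n \mathcal{X}_i \right).
\]

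Next, for each fixed $t > 0$ I would control the trace moment-generating function on the right-hand side by rewriting $t\sum_{i} \mathcal{X}_i = \sum_{i} t\mathcal{X}_i$ and invoking the subadditivity estimate of Lemma~\ref{lma:subadditivity of tensor cgfs}, which yields
\[
\mathbb{E}\,\mathrm{Tr}\exp\left( \sum_{i=1}^n t \mathcal{X}_i \right) \leq \mathrm{Tr}\exp\left( \sum_{i=1}^n \log \mathbb{E}\, e^{t\mathcal{X}_i} \right).
\]
Chaining the two displays shows that $\mathrm{Pr}\big( \lambda_{\max}(\sum_i \mathcal{X}_i) \geq \theta \big) \leq e^{-t\theta}\,\mathrm{Tr}\exp\big( \sum_i \log \mathbb{E}\, e^{t\mathcal{X}_i} \big)$ for every $t>0$. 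Since the left-hand side is independent of $t$, I may pass to the infimum over $t>0$ on the right, obtaining exactly \eqref{eq1:thm:master tail bound}.

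The proof is thus a two-line assembly, and there is no genuine obstacle left at this stage: all of the analytic content has been absorbed into the ingredients. The step that actually carries the weight — and which has already been handled in Lemma~\ref{lma:subadditivity of tensor cgfs} — is the subadditivity of the tensor cumulant-generating functions, whose argument iterates the tensor Lieb concavity theorem (Theorem~\ref{thm:Lieb concavity thm}) together with the law of total expectation and independence of the $\mathcal{X}_i$. When writing the final argument the only things to be careful about are: that Lemma~\ref{lma: Laplace Transform Method} is applied once, to the whole sum, rather than termwise; that the rescaling $t\mathcal{X}_i$ preserves Hermiticity so Lemma~\ref{lma:subadditivity of tensor cgfs} applies verbatim; and that passing to the infimum is legitimate precisely because the intermediate bound holds simultaneously for all $t>0$.
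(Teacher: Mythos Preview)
Your proposal is correct and follows exactly the paper's own proof, which simply states that the theorem is established by substituting Lemma~\ref{lma:subadditivity of tensor cgfs} into the Laplace transform bound of Lemma~\ref{lma: Laplace Transform Method}. Your write-up is in fact more careful than the paper's one-line proof, spelling out the application to the full sum, the chaining, and the passage to the infimum.
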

\begin{proof}
By substituting the Lemma~\ref{lma:subadditivity of tensor cgfs} into the Laplace transform bound provided by the Lemma~\ref{lma: Laplace Transform Method}, this theorem is established. 
\end{proof}

Several useful corollaries will be provided based on Theorem~\ref{thm:Master Tail Bound for Independent Sum of Random Tensors}.

\begin{corollary}\label{cor:3_7}
Given a finite sequence of independent Hermitian random tensors $\{ \mathcal{X}_i \}$ with dimensions in $\mathbb{C}^{I_1 \times \cdots \times I_M \times I_1 \times \cdots \times I_M}$. If there is a function $f: (0, \infty) \rightarrow [0, \infty]$ and a sequence of non-random Hermitian tensors $\{ \mathcal{A}_i \}$ with following condition:
\begin{eqnarray}\label{eq:cond in cor 3_7}
f(t) \mathcal{A}_i \succeq  \log \mathbb{E} e^{t \mathcal{X}_i},~~\mbox{for $t > 0$.}
\end{eqnarray}
Then, for all $\theta \in \mathbb{R}$, we have
\begin{eqnarray}
\mathrm{Pr} \left( \lambda_{\max}\left(\sum\limits_{i=1}^n \mathcal{X}_i \right) \geq \theta \right)
& \leq & \mathbb{I}_{1}^M \inf\limits_{t > 0}\Big\{\exp\left[ - t \theta + f(\theta)\lambda_{\max}\left(\sum\limits_{i=1}^n \mathcal{A}_i \right) \right] \Big\}
\end{eqnarray}
\end{corollary}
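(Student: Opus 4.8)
The plan is to start from the Master Tail Bound (Theorem \ref{thm:Master Tail Bound for Independent Sum of Random Tensors}) and control the trace-exponential term using the hypothesis \eqref{eq:cond in cor 3_7} together with the monotonicity properties of the trace exponential recorded in Section \ref{sec:Tensor Functions}. Concretely, for any fixed $t > 0$ I would first sum the semidefinite inequalities \eqref{eq:cond in cor 3_7} over $i = 1, \dots, n$ to obtain
\begin{eqnarray}
\sum_{i=1}^n \log \mathbb{E} e^{t \mathcal{X}_i} \preceq f(t) \sum_{i=1}^n \mathcal{A}_i,
\end{eqnarray}
which uses only that $\succeq$ is preserved under addition. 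Then I would apply the monotonicity of the trace exponential \eqref{eq:trace exp monotone} to deduce
\begin{eqnarray}
\mathrm{Tr} \exp\left( \sum_{i=1}^n \log \mathbb{E} e^{t \mathcal{X}_i} \right) \leq \mathrm{Tr} \exp\left( f(t) \sum_{i=1}^n \mathcal{A}_i \right).
\end{eqnarray}

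The next step is to pass from the trace exponential of $f(t)\sum_i \mathcal{A}_i$ to its largest eigenvalue: since the tensor exponential of a Hermitian tensor is positive definite, the trace is at most $\mathbb{I}_1^M$ times the maximum eigenvalue, and by the spectral mapping theorem $\lambda_{\max}\bigl(\exp(f(t)\sum_i\mathcal{A}_i)\bigr) = \exp\bigl(\lambda_{\max}(f(t)\sum_i\mathcal{A}_i)\bigr)$. Because $f(t) \geq 0$, homogeneity of $\lambda_{\max}$ gives $\lambda_{\max}(f(t)\sum_i\mathcal{A}_i) = f(t)\,\lambda_{\max}(\sum_i\mathcal{A}_i)$. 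Combining,
\begin{eqnarray}
\mathrm{Tr} \exp\left( f(t) \sum_{i=1}^n \mathcal{A}_i \right) \leq \mathbb{I}_1^M \exp\left( f(t)\, \lambda_{\max}\left( \sum_{i=1}^n \mathcal{A}_i \right) \right).
\end{eqnarray}

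Substituting this chain of inequalities into \eqref{eq1:thm:master tail bound} and pulling the infimum over $t > 0$ through (the factor $\mathbb{I}_1^M$ is constant) yields exactly
\begin{eqnarray}
\mathrm{Pr}\left( \lambda_{\max}\left( \sum_{i=1}^n \mathcal{X}_i \right) \geq \theta \right) \leq \mathbb{I}_1^M \inf_{t > 0} \Big\{ \exp\left[ -t\theta + f(t)\, \lambda_{\max}\left( \sum_{i=1}^n \mathcal{A}_i \right) \right] \Big\}.
\end{eqnarray}
I note that the statement as written has $f(\theta)$ inside the infimum, but the natural and correct bound has $f(t)$ — the quantity being optimized over — so I would write $f(t)$ (this is almost surely a typo in the statement). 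There is no real obstacle here: every ingredient is already in place, and the only point requiring a little care is ensuring $f(t) \geq 0$ so that scalar multiplication commutes with $\lambda_{\max}$ and preserves the semidefinite ordering; this is guaranteed by the hypothesis $f: (0,\infty) \to [0,\infty]$.
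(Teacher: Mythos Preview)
Your proposal is correct and follows essentially the same route as the paper: invoke the Master Tail Bound, use the hypothesis \eqref{eq:cond in cor 3_7} together with the monotonicity of the trace exponential to pass to $\mathrm{Tr}\exp\bigl(f(t)\sum_i\mathcal{A}_i\bigr)$, bound the trace by $\mathbb{I}_1^M$ times the maximum eigenvalue, apply the spectral mapping theorem (using $f(t)\geq 0$), and take the infimum over $t>0$. Your observation that the bound should read $f(t)$ rather than $f(\theta)$ is also correct; the paper's own proof carries the same typo, but the subsequent applications (e.g.\ Eq.~\eqref{eq:4_9}) confirm that $f(t)$ is intended.
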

\begin{proof}
From the condition provided by Eq.~\eqref{eq:cond in cor 3_7} and Theorem~\ref{thm:Master Tail Bound for Independent Sum of Random Tensors}, we have 
\begin{eqnarray}
\mathrm{Pr} \left( \lambda_{\max}\left(\sum\limits_{i=1}^n \mathcal{X}_i \right) \geq \theta \right) & \leq & e^{- t \theta} \mathrm{Tr} \exp (f(\theta) \sum\limits_{i=1}^n \mathcal{A}_i) \nonumber \\
& \leq &  (I_1 \cdots I_M) e^{- t \theta} \lambda_{\max} \left( \exp (f(\theta) \sum\limits_{i=1}^n \mathcal{A}_i) \right) \nonumber \\
& = &  \mathbb{I}_1^M  e^{- t \theta} \exp\left( f(\theta) \lambda_{\max} \left(\sum\limits_{i=1}^n \mathcal{A}_i \right) \right),
\end{eqnarray}
where the second inequality holds since we bound the trace of a positive-definite tensor by the dimension size $I_1 \cdots I_M$ (the multiplication of $M$ positive integers) times the maximum eigenvalue; the last equality is based on the spectral mapping theorem since the function $f$ is nonnegative. This theorem is proved by taking the infimum over positive $t$. 
\end{proof}

\begin{corollary}\label{cor:3_9}
Given a finite sequence of independent Hermitian random tensors $\{ \mathcal{X}_i \}$ with dimensions in $\mathbb{C}^{I_1 \times \cdots \times I_M \times I_1 \times \cdots \times I_M}$. For all $\theta \in \mathbb{R}$, we have
\begin{eqnarray}
\mathrm{Pr} \left( \lambda_{\max}\left(\sum\limits_{i=1}^n \mathcal{X}_i \right) \geq \theta \right)
& \leq & \mathbb{I}_1^M \inf\limits_{t > 0}  \Big\{  \exp\left[ - t \theta +n \log \lambda_{\max} \left( \frac{ \sum\limits_{i=1}^{n} \mathbb{E} e^{t \mathcal{X}_i}}{n}\right)  \right]  \Big\} \nonumber \\
\end{eqnarray}
\end{corollary}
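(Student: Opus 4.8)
The plan is to derive this corollary directly from the Master Tail Bound (Theorem~\ref{thm:Master Tail Bound for Independent Sum of Random Tensors}) by bounding the trace-exponential factor $\mathrm{Tr}\exp\left(\sum_{i=1}^n \log\mathbb{E}e^{t\mathcal{X}_i}\right)$ in a dimension-free way and then applying a concavity (Jensen-type) argument to the logarithms. First I would start from
\begin{eqnarray}
\mathrm{Pr}\left(\lambda_{\max}\left(\sum_{i=1}^n \mathcal{X}_i\right)\geq\theta\right) \leq \inf_{t>0}\Big\{e^{-t\theta}\,\mathrm{Tr}\exp\Big(\sum_{i=1}^n \log\mathbb{E}e^{t\mathcal{X}_i}\Big)\Big\},
\end{eqnarray}
and, exactly as in the proof of Corollary~\ref{cor:3_7}, bound $\mathrm{Tr}\exp(\mathcal{Z})\leq \mathbb{I}_1^M\,\lambda_{\max}(\exp\mathcal{Z})=\mathbb{I}_1^M\exp(\lambda_{\max}(\mathcal{Z}))$ for the Hermitian tensor $\mathcal{Z}=\sum_{i=1}^n\log\mathbb{E}e^{t\mathcal{X}_i}$, using that the exponential of a Hermitian tensor is positive definite and the spectral mapping theorem. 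This reduces the task to controlling $\lambda_{\max}\left(\sum_{i=1}^n\log\mathbb{E}e^{t\mathcal{X}_i}\right)$.

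Next I would invoke the operator concavity of the tensor logarithm stated in Eq.~\eqref{eq:log concave}: applying it inductively (or in its $n$-term averaged form) to the positive-definite tensors $\mathbb{E}e^{t\mathcal{X}_i}$ gives
\begin{eqnarray}
\frac{1}{n}\sum_{i=1}^n \log\mathbb{E}e^{t\mathcal{X}_i} \preceq \log\left(\frac{1}{n}\sum_{i=1}^n \mathbb{E}e^{t\mathcal{X}_i}\right),
\end{eqnarray}
hence $\sum_{i=1}^n\log\mathbb{E}e^{t\mathcal{X}_i}\preceq n\log\left(\frac1n\sum_{i=1}^n\mathbb{E}e^{t\mathcal{X}_i}\right)$. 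Taking $\lambda_{\max}$ of both sides (which is monotone with respect to $\succeq$) and using the spectral mapping theorem to pull $\lambda_{\max}$ through the monotone scalar function $x\mapsto n\log x$ on the spectrum of the positive-definite tensor $\frac1n\sum_i\mathbb{E}e^{t\mathcal{X}_i}$ yields
\begin{eqnarray}
\lambda_{\max}\left(\sum_{i=1}^n\log\mathbb{E}e^{t\mathcal{X}_i}\right) \leq n\log\lambda_{\max}\left(\frac{\sum_{i=1}^n\mathbb{E}e^{t\mathcal{X}_i}}{n}\right).
\end{eqnarray}
Substituting this back and taking the infimum over $t>0$ gives the claimed inequality.

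The main obstacle — really the only nontrivial point — is justifying that $\lambda_{\max}$ composed with the scalar function $x\mapsto n\log x$ applied to a positive-definite tensor equals $n\log\lambda_{\max}$ of that tensor; this is a direct consequence of the spectral mapping theorem together with monotonicity of $\log$, so it is routine once stated carefully. One should also make sure the operator concavity in Eq.~\eqref{eq:log concave}, stated there for two tensors, extends to the $n$-term average; this follows by a standard induction on $n$, combining the two-term inequality with the monotonicity of $\log$ (Eq.~\eqref{eq:log monotone}). I expect no further difficulties, and the proof is short.
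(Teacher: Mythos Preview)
Your proposal is correct and follows essentially the same approach as the paper: both start from the Master Tail Bound, invoke the operator concavity of the tensor logarithm (Eq.~\eqref{eq:log concave}) to pass from $\sum_i\log\mathbb{E}e^{t\mathcal{X}_i}$ to $n\log\bigl(\tfrac{1}{n}\sum_i\mathbb{E}e^{t\mathcal{X}_i}\bigr)$, and then bound the trace by $\mathbb{I}_1^M$ times the maximum eigenvalue together with the spectral mapping theorem. The only cosmetic difference is the order of the last two steps --- the paper first applies concavity and uses the trace-exponential monotonicity (Eq.~\eqref{eq:trace exp monotone}) to carry the operator inequality inside $\mathrm{Tr}\exp(\cdot)$, whereas you first extract the dimension factor via $\mathrm{Tr}\exp(\mathcal{Z})\leq\mathbb{I}_1^M\exp(\lambda_{\max}(\mathcal{Z}))$ and then use monotonicity of $\lambda_{\max}$; the ingredients and the result are identical.
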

\begin{proof}
From tensor logarithm given by Eq.~\eqref{eq:log concave}, we have 
\begin{eqnarray}
\sum\limits_{i=1}^{n} \log \mathbb{E} e^{t \mathcal{X}_i} = n \cdot \frac{1}{n}  \sum\limits_{i=1}^{n} \log \mathbb{E} e^{t \mathcal{X}_i} \preceq n \log \left( \frac{1}{n} \sum\limits_{i=1}^{n} \mathbb{E} e^{t \mathcal{X}_i} \right),
\end{eqnarray}
and from the trace exponential monotone property provided by Eq.~\eqref{eq:trace exp monotone}, we have 
\begin{eqnarray}
\mathrm{Pr} \left( \lambda_{\max}\left(\sum\limits_{i=1}^n \mathcal{X}_i \right) \geq \theta \right)
 \leq  e^{- t \theta} \mathrm{Tr} \exp \left(n \log \left( \frac{1}{n} \sum\limits_{i=1}^n \mathbb{E} e^{ t \mathcal{X}_i }  \right) \right)~~~~~~~~~~~~~~~~~~ \nonumber \\
\leq  ( I_1 \cdots I_M) \inf\limits_{t > 0}  \Big\{ \exp\left[ - t \theta +n \log \lambda_{\max} \left( \frac{ \sum\limits_{i=1}^{n} \mathbb{E} e^{t \mathcal{X}_i}}{n}\right)  \right] \Big\},
\end{eqnarray}
where the last inequality holds since we bound the trace of a positive-definite tensor by the dimension size $I_1 \cdots I_M$ (the multiplication of $M$ positive integers) times the maximum eigenvalue and apply spectral mapping theorem.
\end{proof}

\section{Tensor with Random Series}\label{sec:Tensor Gaussian Series}

A tensor Gaussian series is one of the simplest cases of a sum of independent random tensors. For scalers, a Gaussian series with real coefficients satisfies a normal-type tail bound where the variance is controlled by the sum of squares coefficients. The main purpose of the first Section~\ref{sec:Tensor with Gaussian and Rademacher Random Series} is to extend this scenario to tensors. In Section~\ref{sec:A Gaussian Tensor with Nonuniform Variances}, we will apply results from Section~\ref{sec:Tensor with Gaussian and Rademacher Random Series} to consider Gaussian tensor with nonuniform variances. Finally, we will provide the lower and upper bounds of tensor expectation in Section~\ref{sec:Lower and Upper Bounds of Tensor Expectation}.

\subsection{Tensor with Gaussian and Rademacher Random Series}\label{sec:Tensor with Gaussian and Rademacher Random Series}

We begin with a lemma about moment-generating functions of Rademacher and Gaussian normal random variables. 

\begin{lemma}\label{lma:mgf of Rademacher and normal rvs.}
Suppose that the tensor $\mathcal{A}$ is Hermitian.  Given a Gaussian normal random variable $\alpha$ and a Rademacher random variable $\beta$, then, we have
\begin{eqnarray}
\mathbb{E}e^{\alpha t \mathcal{A}} =  e^{t^2 \mathcal{A}^2 / 2}  \mbox{~~and~~} e^{t^2 \mathcal{A}^2 / 2} \succeq \mathbb{E}e^{\beta t \mathcal{A}},
\end{eqnarray}   
where $t \in \mathbb{R}$.
\end{lemma}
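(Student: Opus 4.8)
The plan is to reduce both statements to the scalar computations for $\mathbb{E}e^{\alpha s}$ and $\mathbb{E}e^{\beta s}$ by passing to the eigenbasis of the Hermitian tensor $\mathcal{A}$. Write the eigenvalue decomposition $\mathcal{A} = \mathcal{U} \star_M \Lambda \star_M \mathcal{U}^H$. Since the tensor exponential is defined spectrally (Eq.~\eqref{eq:tensor func def}) and $\alpha t \mathcal{A}$ has the same eigentensors as $\mathcal{A}$ with eigenvalues $\alpha t \lambda_n$, we get $e^{\alpha t \mathcal{A}} = \mathcal{U} \star_M e^{\alpha t \Lambda} \star_M \mathcal{U}^H$, and the expectation passes through the (fixed) unitary factors: $\mathbb{E}e^{\alpha t \mathcal{A}} = \mathcal{U} \star_M \big(\mathbb{E}e^{\alpha t \Lambda}\big) \star_M \mathcal{U}^H$. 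So everything reduces to the diagonal entries, i.e.\ to the scalar moment-generating functions evaluated at $s = t\lambda_n$.

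For the Gaussian case, I would invoke the standard identity $\mathbb{E}e^{\alpha s} = e^{s^2/2}$ for a standard normal $\alpha$, applied entrywise with $s = t\lambda_n$. This yields $\mathbb{E}e^{\alpha t \Lambda}$ diagonal with entries $e^{t^2\lambda_n^2/2}$, which is exactly $e^{t^2\Lambda^2/2}$; conjugating back by $\mathcal{U}$ and using that $\mathcal{A}^2 = \mathcal{U}\star_M \Lambda^2 \star_M \mathcal{U}^H$ gives $\mathbb{E}e^{\alpha t\mathcal{A}} = e^{t^2\mathcal{A}^2/2}$. For the Rademacher case, the scalar fact is $\mathbb{E}e^{\beta s} = \cosh(s) \leq e^{s^2/2}$ for all real $s$; the inequality $\cosh s \le e^{s^2/2}$ is seen immediately by comparing Taylor series coefficient by coefficient, since $\frac{1}{(2k)!} \le \frac{1}{2^k k!}$. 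Applying this entrywise with $s = t\lambda_n$ gives a diagonal inequality $\mathbb{E}e^{\beta t\Lambda} \preceq e^{t^2\Lambda^2/2}$, and conjugating by the same unitary $\mathcal{U}$ preserves the semidefinite order (congruence by a unitary), yielding $\mathbb{E}e^{\beta t\mathcal{A}} \preceq e^{t^2\mathcal{A}^2/2}$.

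The only real subtlety — and the step I would be most careful about — is justifying that the scalar expectation commutes with the spectral construction, i.e.\ that $\mathbb{E}\big(\mathcal{U}\star_M g_\omega(\Lambda)\star_M \mathcal{U}^H\big) = \mathcal{U}\star_M \big(\mathbb{E}\,g_\omega(\Lambda)\big)\star_M \mathcal{U}^H$ and that the entrywise expectation of the diagonal tensor $e^{\alpha t\Lambda}$ is the diagonal tensor with entries $\mathbb{E}e^{\alpha t\lambda_n}$. Both follow because expectation of a random tensor acts entrywise and is linear, and the Einstein product with the fixed tensors $\mathcal{U}$, $\mathcal{U}^H$ is a fixed linear map; under the paper's blanket regularity assumption on random variables, the interchange of expectation and the (absolutely convergent) exponential series is legitimate. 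Once this bookkeeping is in place, the two claims drop out of the scalar identities $\mathbb{E}e^{\alpha s} = e^{s^2/2}$ and $\mathbb{E}e^{\beta s} = \cosh s \le e^{s^2/2}$, and the semidefinite-order statement uses only that unitary congruence preserves $\succeq$.
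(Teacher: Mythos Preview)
Your proof is correct and rests on the same scalar facts as the paper's (the Gaussian moment identity $\mathbb{E}e^{\alpha s}=e^{s^2/2}$ and the inequality $\cosh s \le e^{s^2/2}$). The only cosmetic difference is that the paper works directly with the tensor power series $\mathbb{E}e^{\alpha t\mathcal{A}}=\sum_{k\ge 0}\mathbb{E}(\alpha^k)(t\mathcal{A})^k/k!$ and then invokes the transfer rule in Eq.~\eqref{eq:tensor psd ordering} for the Rademacher bound, rather than explicitly passing to the eigenbasis as you do; since all powers of $\mathcal{A}$ commute, the two presentations are equivalent.
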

\begin{proof}
For the Gaussian normal random variable, because we have
\begin{eqnarray}
\mathbb{E} (\alpha^{2n}) = \frac{ (2i)!}{i ! 2^i}  \mbox{~~and~~} \mathbb{E} (\alpha^{2i + 1}) = 0, 
\end{eqnarray}
where $i = 0,1,2,\cdots$; then
\begin{eqnarray}
\mathbb{E}e^{\alpha t \mathcal{A}} &=& \mathcal{I} + \sum\limits_{i=1}^{\infty} \frac{  \mathbb{E}(\alpha^{2i}) (t \mathcal{A})^{2i}}{ (2i)! } \nonumber \\
&=& \mathcal{I} + \sum\limits_{i=1}^{\infty} \frac{ (t^2 \mathcal{A}^2/2)^i}{ i! } = e^{t^2 \mathcal{A}^2 /2}. 
\end{eqnarray}

For the Rademacher random variable, we have 
\begin{eqnarray}
\mathbb{E} e^{\beta  t \mathcal{A}} = \cosh (t \mathcal{A}) \preceq e^{t^2 \mathcal{A}^2/2}.
\end{eqnarray}
Therefore, this Lemma is proved.
\end{proof}

We are ready to present the main Theorem of this section about Hermitian tensors with Gaussian and Rademacher series.

\begin{theorem}[Hermitian Tensor with Gaussian and Rademacher Series]\label{thm:TensorGaussianNormalSeries}
Given a finite sequence $\mathcal{A}_i$ of fixed Hermitian tensors with dimensions as $\mathbb{C}^{I_1 \times \cdots \times I_M \times I_1 \times \cdots \times I_M}$, and let $\{ \alpha_i \}$ be a finite sequence of independent normal variables. We define  
\begin{eqnarray}\label{eq:4_5}
\sigma^2 &\define& \left\Vert \sum\limits_i^{n} \mathcal{A}^2_i \right\Vert,
\end{eqnarray}
then, for all $\theta \geq 0$, we have 
\begin{eqnarray}\label{eq:4_3}
\mathrm{Pr}\left( \lambda_{\max} \left( \sum\limits_{i=1}^{n}\alpha_i \mathcal{A}_i \right) \geq \theta \right) \leq \mathbb{I}_1^M e^{-\frac{\theta^2 }{2 \sigma^2}},
\end{eqnarray}
and
\begin{eqnarray}\label{eq:4_4}
\mathrm{Pr}\left( \left\Vert \sum\limits_{i=1}^{n}\alpha_i \mathcal{A}_i \right\Vert \geq \theta \right) \leq 2 \mathbb{I}_1^M e^{-\frac{\theta^2 }{2 \sigma^2}}.
\end{eqnarray}
This theorem is also valid for a finite sequence of independent Rademacher random variables $\{ \alpha_i \}$.
\end{theorem}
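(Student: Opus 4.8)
The plan is to reduce everything to the independent-sum machinery already in place, namely Corollary~\ref{cor:3_7}, applied to $\mathcal{X}_i = \alpha_i \mathcal{A}_i$, and then to obtain the spectral-norm bound by a short symmetrization argument.

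First I would identify the cumulant-generating tensors. By Lemma~\ref{lma:mgf of Rademacher and normal rvs.}, for a standard normal $\alpha_i$ we have $\mathbb{E}\, e^{t \alpha_i \mathcal{A}_i} = e^{t^2 \mathcal{A}_i^2 / 2}$, and for a Rademacher $\alpha_i$ we have $\mathbb{E}\, e^{t \alpha_i \mathcal{A}_i} \preceq e^{t^2 \mathcal{A}_i^2 / 2}$. Applying the monotonicity of the tensor logarithm~\eqref{eq:log monotone} in the Rademacher case, in either case
$$\log \mathbb{E}\, e^{t \alpha_i \mathcal{A}_i} \preceq \frac{t^2}{2}\, \mathcal{A}_i^2, \qquad t > 0.$$
This is precisely hypothesis~\eqref{eq:cond in cor 3_7} of Corollary~\ref{cor:3_7} with $f(t) = t^2/2$ and with the fixed Hermitian tensors $\mathcal{A}_i^2$ playing the role of the ``$\mathcal{A}_i$'' there; note each $\mathcal{A}_i^2 \succeq \mathcal{O}$ since $\mathcal{A}_i$ is Hermitian, and $f$ maps $(0,\infty)$ into $[0,\infty)$.

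Next I would feed this into Corollary~\ref{cor:3_7}. Because $\sum_{i=1}^n \mathcal{A}_i^2$ is positive semidefinite, $\lambda_{\max}\big(\sum_{i=1}^n \mathcal{A}_i^2\big) = \big\Vert \sum_{i=1}^n \mathcal{A}_i^2 \big\Vert = \sigma^2$, so the corollary gives
$$\mathrm{Pr}\left( \lambda_{\max}\left( \sum_{i=1}^{n} \alpha_i \mathcal{A}_i \right) \geq \theta \right) \leq \mathbb{I}_1^M \inf_{t > 0} \exp\!\left( - t\theta + \frac{t^2 \sigma^2}{2} \right).$$
The scalar optimization over $t > 0$ is routine: since $\theta \geq 0$, the exponent is minimized at $t = \theta/\sigma^2$, with minimum value $-\theta^2/(2\sigma^2)$, which proves~\eqref{eq:4_3}. (If $\sigma^2 = 0$ then every $\mathcal{A}_i = \mathcal{O}$ and both claimed bounds are trivial, so we may assume $\sigma^2 > 0$.)

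For the norm bound~\eqref{eq:4_4}, I would use that a Hermitian tensor $\mathcal{Y}$ satisfies $\Vert \mathcal{Y} \Vert = \max\{\lambda_{\max}(\mathcal{Y}),\, \lambda_{\max}(-\mathcal{Y})\}$, so a union bound gives
$$\mathrm{Pr}\left( \left\Vert \sum_{i=1}^n \alpha_i \mathcal{A}_i \right\Vert \geq \theta \right) \leq \mathrm{Pr}\left( \lambda_{\max}\left( \sum_{i=1}^n \alpha_i \mathcal{A}_i \right) \geq \theta \right) + \mathrm{Pr}\left( \lambda_{\max}\left( \sum_{i=1}^n (-\alpha_i) \mathcal{A}_i \right) \geq \theta \right).$$
Since $(-\alpha_1,\dots,-\alpha_n)$ has the same joint law as $(\alpha_1,\dots,\alpha_n)$ (independent symmetric variables) and the variance proxy $\sigma^2$ is unchanged, each summand is at most $\mathbb{I}_1^M e^{-\theta^2/(2\sigma^2)}$ by the first part, yielding~\eqref{eq:4_4}. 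The Rademacher case runs identically, since the only fact used from Lemma~\ref{lma:mgf of Rademacher and normal rvs.} is the semidefinite upper bound $\mathbb{E}\,e^{t\alpha_i\mathcal{A}_i} \preceq e^{t^2\mathcal{A}_i^2/2}$. There is no genuine obstacle here: the substantive work was already done in Lemma~\ref{lma:mgf of Rademacher and normal rvs.} and Corollary~\ref{cor:3_7}; the only points requiring a moment's care are verifying the semidefinite hypothesis of Corollary~\ref{cor:3_7} via log-monotonicity and the identification $\lambda_{\max}\big(\sum_i \mathcal{A}_i^2\big) = \sigma^2$.
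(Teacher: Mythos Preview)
Your proposal is correct and follows essentially the same route as the paper: Lemma~\ref{lma:mgf of Rademacher and normal rvs.} to control the moment-generating tensors, Corollary~\ref{cor:3_7} with $f(t)=t^2/2$ and the $\mathcal{A}_i^2$ as the dominating tensors, then optimization $t=\theta/\sigma^2$, and finally the symmetry/union-bound step for~\eqref{eq:4_4}. If anything, you are slightly more careful than the paper---you make explicit the log-monotonicity needed to verify~\eqref{eq:cond in cor 3_7}, the identification $\lambda_{\max}\big(\sum_i\mathcal{A}_i^2\big)=\sigma^2$, the degenerate case $\sigma^2=0$, and you correctly phrase the norm bound as a union bound rather than an equality.
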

\begin{proof}
Given a finite sequence of independent Gaussian or Rademacher random variables $\{ \alpha_i \}$, from Lemma~\ref{lma:mgf of Rademacher and normal rvs.}, we have 
\begin{eqnarray}
e^{\frac{t^2 \mathcal{A}_i^2}{2}} \succeq \mathbb{E}e^{\alpha_i t \mathcal{A}_i}.
\end{eqnarray}
From the definition in Eq.~\eqref{eq:4_5} and Corollary~\ref{cor:3_7}, we have 
\begin{eqnarray}\label{eq:4_9}
\mathrm{Pr}\left( \lambda_{\max} \left( \sum\limits_{i=1}^{n}\alpha_i \mathcal{A}_i \right) \geq \theta \right) \leq  \mathbb{I}_1^M \inf\limits_{t > 0}\Big\{ e^{- t \theta + \frac{t^2 \sigma^2}{2} } \Big\} = \mathbb{I}_1^M e^{- \frac{\theta^2}{2 \sigma^2}}.
\end{eqnarray}
This establishes  Eq.~\eqref{eq:4_3}. For  Eq.~\eqref{eq:4_4}, we have to apply following facts: $\left\Vert \mathcal{X} \right\Vert$ for any given Hermitian tensor $\mathcal{X}$. Because Gaussian and Rademacher random variables are symmetric, we have 
\begin{eqnarray}
\mathrm{Pr}\left( \lambda_{\max} \left( \sum\limits_{i=1}^{n} (-\alpha_i) \mathcal{A}_i \right) \geq \theta \right) =
\mathrm{Pr}\left(  - \lambda_{\min} \left( \sum\limits_{i=1}^{n}\alpha_i \mathcal{A}_i \right) \geq \theta \right) \leq \mathbb{I}_1^M e^{- \frac{\theta^2}{2 \sigma^2}}.
\end{eqnarray}
Then, we obtain  Eq.~\eqref{eq:4_4} as follows: 
\begin{eqnarray}
\mathrm{Pr}\left( \left\Vert \sum\limits_{i=1}^{n}\alpha_i \mathcal{A}_i \right\Vert \geq \theta \right)
&=& 2 \mathrm{Pr}\left( \lambda_{\max} \left( \sum\limits_{i=1}^{n}\alpha_i \mathcal{A}_i \right) \geq \theta \right)
\nonumber \\  
& \leq &  2 \mathbb{I}_1^M e^{-\frac{\theta^2 }{2 \sigma^2}}.
\end{eqnarray}
\end{proof}

From the Hermitian dilation definition provided by Eq.~\eqref{eq:Hermitian dialation def}, we can extend Theorem~\ref{thm:TensorGaussianNormalSeries} from square Hermitian tensor to rectangular tensor by the following corollary.
\begin{corollary}[Rectangular Tensor with Gaussian and Rademacher Series]\label{cor:Rectangular Tensor with Gaussian and Rademacher Series}
Given a finite sequence $\mathcal{A}_i$ of fixed Hermitian tensors with dimensions as $\mathbb{C}^{I_1 \times \cdots \times I_M \times J_1 \times \cdots \times J_M}$, and let $\{ \alpha_i \}$ be a finite sequence of indepedent normal variables. We define 
\begin{eqnarray}\
\sigma^2 &\define& \max \Bigg\{ \left\Vert \sum\limits_{i=1}^n \mathcal{A}_i \star_M \mathcal{A}^H_i \right\Vert,  \left\Vert \sum\limits_{i=1}^n \mathcal{A}^H_i \star_M \mathcal{A}_i \right\Vert\Bigg\}.
\end{eqnarray}
then, for all $\theta \geq 0$, we have 
\begin{eqnarray}
\mathrm{Pr}\left( \left\Vert \sum\limits_{i=1}^n \alpha_i \mathcal{A}_i \right\Vert \geq \theta \right) \leq \prod\limits_{m=1}^M(I_m + J_m) e^{-\frac{\theta^2 }{2 \sigma^2}}.
\end{eqnarray}
This corollary is also valid for a finite sequence of independent Rademacher random variables $\{ \alpha_i \}$.
\end{corollary}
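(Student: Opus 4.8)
The plan is to reduce the rectangular case to the already-proved Hermitian (square) case via the Hermitian dilation operator $\mathbb{D}$ introduced in Eq.~\eqref{eq:Hermitian dialation def}. First I would observe that $\mathbb{D}$ is linear in its argument, so $\mathbb{D}\left(\sum_{i=1}^n \alpha_i \mathcal{A}_i\right) = \sum_{i=1}^n \alpha_i \mathbb{D}(\mathcal{A}_i)$, where each $\mathbb{D}(\mathcal{A}_i)$ is a \emph{square Hermitian} tensor of dimension $\prod_{m=1}^M(I_m + J_m)$ in each of the two index groups. The spectral identity in Eq.~\eqref{eq:spectral identity} then gives $\left\Vert \sum_i \alpha_i \mathcal{A}_i \right\Vert = \left\Vert \mathbb{D}\left(\sum_i \alpha_i \mathcal{A}_i\right)\right\Vert = \left\Vert \sum_i \alpha_i \mathbb{D}(\mathcal{A}_i)\right\Vert$, so the event whose probability we want to bound is exactly the norm event for the square Hermitian tensor series $\{\mathbb{D}(\mathcal{A}_i)\}$.

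Next I would apply Theorem~\ref{thm:TensorGaussianNormalSeries}, specifically the norm bound Eq.~\eqref{eq:4_4}, to the sequence $\{\mathbb{D}(\mathcal{A}_i)\}$. This yields
\begin{eqnarray}
\mathrm{Pr}\left( \left\Vert \sum\limits_{i=1}^n \alpha_i \mathcal{A}_i \right\Vert \geq \theta \right) \leq 2 \prod\limits_{m=1}^M(I_m + J_m)\, e^{-\frac{\theta^2}{2 \widetilde{\sigma}^2}},
\end{eqnarray}
where $\widetilde{\sigma}^2 = \left\Vert \sum_{i=1}^n \mathbb{D}(\mathcal{A}_i)^2 \right\Vert$ is the variance parameter for the dilated sequence, and the ambient dimension factor $\mathbb{I}_1^M$ from Theorem~\ref{thm:TensorGaussianNormalSeries} is now $\prod_{m=1}^M(I_m+J_m)$. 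Wait --- I should double-check the stated conclusion, which has \emph{no} factor of $2$; I would either keep the $2$ (it is harmless and follows directly) or note that for the rectangular Hermitian-dilation setting the symmetry argument already accounts for both tails, and reconcile the constant with the theorem statement accordingly.

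The remaining step is to identify $\widetilde{\sigma}^2$ with the claimed $\sigma^2 = \max\left\{ \left\Vert \sum_i \mathcal{A}_i \star_M \mathcal{A}_i^H \right\Vert, \left\Vert \sum_i \mathcal{A}_i^H \star_M \mathcal{A}_i \right\Vert\right\}$. Using the block formula
\begin{eqnarray}
\mathbb{D}(\mathcal{A}_i)^2 = \begin{bmatrix} \mathcal{A}_i \star_M \mathcal{A}_i^H & \mathcal{O} \\ \mathcal{O} & \mathcal{A}_i^H \star_M \mathcal{A}_i \end{bmatrix},
\end{eqnarray}
summing over $i$ keeps the block-diagonal structure, so $\sum_i \mathbb{D}(\mathcal{A}_i)^2 = \mathrm{diag}\left(\sum_i \mathcal{A}_i \star_M \mathcal{A}_i^H,\ \sum_i \mathcal{A}_i^H \star_M \mathcal{A}_i\right)$. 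The norm (largest eigenvalue) of a block-diagonal positive-semidefinite tensor is the maximum of the norms of its diagonal blocks, which is precisely $\sigma^2$. I expect the main obstacle to be purely bookkeeping: verifying that the spectral-norm/eigenvalue relations for block-diagonal tensors behave exactly as in the matrix case under the Einstein product, and pinning down the correct constant and dimension factor so that the final inequality matches the stated form. Once those are checked, the corollary follows by chaining the three observations.
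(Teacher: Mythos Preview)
Your approach is exactly the paper's: reduce to the Hermitian case via the dilation $\mathbb{D}$, compute the variance parameter from the block-diagonal formula for $\sum_i \mathbb{D}(\mathcal{A}_i)^2$, and invoke Theorem~\ref{thm:TensorGaussianNormalSeries}. The only wrinkle is the stray factor of $2$ you flagged. The resolution is that the spectral identity Eq.~\eqref{eq:spectral identity} gives not merely $\left\Vert \mathbb{D}(\mathcal{Y})\right\Vert = \left\Vert \mathcal{Y}\right\Vert$ but in fact $\lambda_{\max}(\mathbb{D}(\mathcal{Y})) = \left\Vert \mathcal{Y}\right\Vert$; hence the event $\left\{\left\Vert \sum_i \alpha_i \mathcal{A}_i\right\Vert \geq \theta\right\}$ coincides with $\left\{\lambda_{\max}\left(\sum_i \alpha_i \mathbb{D}(\mathcal{A}_i)\right)\geq \theta\right\}$, and you should apply the one-sided bound Eq.~\eqref{eq:4_3} rather than the two-sided norm bound Eq.~\eqref{eq:4_4}. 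That gives the dimension factor $\prod_{m=1}^M (I_m+J_m)$ with no extra $2$, matching the stated corollary.
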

\begin{proof}
Let $\{ \alpha_i \}$ be a finite sequence of independent Gaussian or Rademacher random variables. Consider a finite sequence of random Hermitian tensors $\{ \alpha_i \mathbb{D}(\mathcal{A}_i) \}$ with dimensions $\mathbb{C}^{(I_1 + J_1) \times \cdots \times (I_M + J_M) \times (I_1 + J_1) \times \cdots \times (I_M + J_M)}$, from the spectral relation of a dilation tensor provided by Eq.~\eqref{eq:spectral identity}, we have 
\begin{eqnarray}\label{eq1:cor:Rectangular Tensor Gaussian with Normal Series}
\left\Vert \sum\limits_i^n \alpha_i \mathcal{A}_i \right\Vert &=& \lambda_{\max} \left( \mathbb{D} \left(  \sum\limits_{i=1}^n\alpha_i \mathcal{A}_i \right)\right) = \lambda_{\max} \left(\sum\limits_{i=1}^n \alpha_i  \mathbb{D} \left(\mathcal{A}_i \right)\right).
\end{eqnarray}
Due to the following singular value relation
\begin{eqnarray}\label{eq2:cor:Rectangular Tensor Gaussian with Normal Series}
\sigma^2 &=& \left \Vert  \sum\limits_i \mathbb{D}( \mathcal{A}_i)^2\right\Vert
=
\left\Vert
\begin{bmatrix}
\sum\limits_{i=1}^n \mathcal{A}_i \star_M \mathcal{A}_i^H & \mathcal{O}   \\
\mathcal{O} &\sum\limits_{i=1}^n \mathcal{A}^H_i \star_M \mathcal{A}_i \\
\end{bmatrix}\right\Vert \nonumber \\
&=& \max \Bigg\{ \left\Vert\sum\limits_{i=1}^n  \mathcal{A}_i \star_M \mathcal{A}^H_i \right\Vert,  \left\Vert \sum\limits_i^{n} \mathcal{A}^H_i \star_M \mathcal{A}_i \right\Vert\Bigg\}.
\end{eqnarray}
From Eqs.~\eqref{eq1:cor:Rectangular Tensor Gaussian with Normal Series},  ~\eqref{eq2:cor:Rectangular Tensor Gaussian with Normal Series} and Theorem~\ref{thm:TensorGaussianNormalSeries}, this corollary is proved.
\end{proof}

\subsection{A Gaussian Tensor with Nonuniform Variances}\label{sec:A Gaussian Tensor with Nonuniform Variances}

In this section, we will apply results obtained from the previous section to consider Gaussian tensor with nonuniform variances
\begin{corollary}\label{cor:Gaussian Tensor with Nonuniform Var}
Given a tensor $\mathcal{A} \in \mathbb{C}^{I_1 \times \cdots \times I_M \times J_1 \times \cdots \times J_M}$ and a random tensor $\mathcal{X} \in $ \\ $ \mathbb{C}^{I_1 \times \cdots \times I_M \times J_1 \times \cdots \times J_M}$ whose entries are independent standard Gaussian normal random variables. Let $\circ$ be used to represent a Hadamard product (entrywise) between two tensors with same dimensions. Then, we have
\begin{eqnarray}
\mathrm{Pr} \left( \left\Vert \mathcal{X} \circ \mathcal{A} \right\Vert \geq \theta \right) 
\leq \prod\limits_{m=1}^M (I_m + J_m) e^{-\frac{\theta^2 }{2 \sigma^2}}, 
\end{eqnarray}
where 
\begin{eqnarray}
\sigma^2 = \max \Bigg\{\max_{i_1,\cdots,i_M} \left\Vert a_{i_1,\cdots,i_M,:}\right\Vert^2, 
\max_{j_1,\cdots,j_M} \left\Vert a_{:,j_1,\cdots,j_M}\right\Vert^2 \Bigg\},
\end{eqnarray}
where $a_{i_1,\cdots,i_M,:}$ and $a_{:,j_1,\cdots,j_M}$ represent the row-part of the tensor $\mathcal{A}$ and column-part of the tensor $\mathcal{A}$, respectively.
\end{corollary}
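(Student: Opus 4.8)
The plan is to realize the Hadamard product $\mathcal{X} \circ \mathcal{A}$ as a Gaussian (resp.\ Rademacher) series of fixed rectangular tensors and then invoke Corollary~\ref{cor:Rectangular Tensor with Gaussian and Rademacher Series}. For each pair of multi-indices $\mathbf{i} = (i_1,\ldots,i_M)$ and $\mathbf{j} = (j_1,\ldots,j_M)$, let $\mathcal{E}_{\mathbf{i},\mathbf{j}} \in \mathbb{C}^{I_1 \times \cdots \times I_M \times J_1 \times \cdots \times J_M}$ be the standard basis tensor with a single nonzero entry, equal to $1$, at position $(\mathbf{i},\mathbf{j})$, and set $\mathcal{B}_{\mathbf{i},\mathbf{j}} \define a_{\mathbf{i},\mathbf{j}} \mathcal{E}_{\mathbf{i},\mathbf{j}}$. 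Writing $\alpha_{\mathbf{i},\mathbf{j}}$ for the entry of $\mathcal{X}$ at $(\mathbf{i},\mathbf{j})$, which are independent standard Gaussian variables, one has the identity $\mathcal{X} \circ \mathcal{A} = \sum_{\mathbf{i},\mathbf{j}} \alpha_{\mathbf{i},\mathbf{j}} \mathcal{B}_{\mathbf{i},\mathbf{j}}$, exhibiting $\mathcal{X} \circ \mathcal{A}$ as a series of fixed tensors multiplied by independent normal coefficients.

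Next I would compute the variance parameter demanded by Corollary~\ref{cor:Rectangular Tensor with Gaussian and Rademacher Series}. A direct evaluation of the Einstein product of basis tensors, contracting out the $J$-indices, gives $\mathcal{E}_{\mathbf{i},\mathbf{j}} \star_M \mathcal{E}_{\mathbf{i},\mathbf{j}}^H = \mathcal{E}^{(\mathrm{row})}_{\mathbf{i},\mathbf{i}}$, the diagonal basis tensor in $\mathbb{C}^{I_1\times\cdots\times I_M\times I_1\times\cdots\times I_M}$ with a single $1$ at $(\mathbf{i},\mathbf{i})$, and similarly $\mathcal{E}_{\mathbf{i},\mathbf{j}}^H \star_M \mathcal{E}_{\mathbf{i},\mathbf{j}} = \mathcal{E}^{(\mathrm{col})}_{\mathbf{j},\mathbf{j}}$. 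Hence $\sum_{\mathbf{i},\mathbf{j}} \mathcal{B}_{\mathbf{i},\mathbf{j}} \star_M \mathcal{B}_{\mathbf{i},\mathbf{j}}^H = \sum_{\mathbf{i}} \big( \sum_{\mathbf{j}} |a_{\mathbf{i},\mathbf{j}}|^2 \big) \mathcal{E}^{(\mathrm{row})}_{\mathbf{i},\mathbf{i}}$ is diagonal with largest diagonal entry $\max_{\mathbf{i}} \left\Vert a_{\mathbf{i},:}\right\Vert^2$, and likewise $\sum_{\mathbf{i},\mathbf{j}} \mathcal{B}_{\mathbf{i},\mathbf{j}}^H \star_M \mathcal{B}_{\mathbf{i},\mathbf{j}}$ is diagonal with largest diagonal entry $\max_{\mathbf{j}} \left\Vert a_{:,\mathbf{j}}\right\Vert^2$. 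Taking the maximum of the two spectral norms reproduces exactly the $\sigma^2$ in the statement.

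Finally, applying Corollary~\ref{cor:Rectangular Tensor with Gaussian and Rademacher Series} to the family $\{\mathcal{B}_{\mathbf{i},\mathbf{j}}\}$ with coefficients $\{\alpha_{\mathbf{i},\mathbf{j}}\}$ yields the claimed bound, the dimensional prefactor $\prod_{m=1}^M (I_m + J_m)$ arising from the Hermitian dilation built into that corollary; the Rademacher case is identical since the same corollary covers it. The only genuine work here is organizational rather than analytic: setting up the multi-index notation cleanly and verifying the two Einstein-product identities for basis tensors — in particular checking that every term $\mathcal{B}_{\mathbf{i},\mathbf{j}} \star_M \mathcal{B}_{\mathbf{i},\mathbf{j}}^H$ lands on the diagonal, so that the two sums collapse to the row- and column-norm maxima. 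No tool beyond the rectangular Gaussian/Rademacher series corollary is required.
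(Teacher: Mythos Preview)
Your proposal is correct and follows essentially the same route as the paper: decompose $\mathcal{X}\circ\mathcal{A}$ as a Gaussian series $\sum_{\mathbf{i},\mathbf{j}} \alpha_{\mathbf{i},\mathbf{j}}\, a_{\mathbf{i},\mathbf{j}}\mathcal{E}_{\mathbf{i},\mathbf{j}}$, verify that the two Gram sums $\sum \mathcal{B}\star_M\mathcal{B}^H$ and $\sum \mathcal{B}^H\star_M\mathcal{B}$ are diagonal with entries the row- and column-norm squares, and then invoke Corollary~\ref{cor:Rectangular Tensor with Gaussian and Rademacher Series}. Your multi-index notation is a bit cleaner, but the argument is identical in substance.
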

\begin{proof}
Since we can decompose the tensor $\mathcal{X} \circ \mathcal{A}$ as:
\begin{eqnarray}
\mathcal{X} \circ \mathcal{A}  &=& \sum\limits_{i_1,\cdots,i_M,j_1,\cdots,j_M} x_{i_1,\cdots,i_M,j_1,\cdots,j_M} a_{i_1,\cdots,i_M,j_1,\cdots,j_M} \mathcal{E}_{i_1,\cdots,i_M,j_1,\cdots,j_M},  
\end{eqnarray}
where $\mathcal{E}_{i_1,\cdots,i_M,j_1,\cdots,j_M} \in \mathbb{C}^{I_1 \times \cdots \times I_M \times J_1 \times \cdots \times J_M}$ is the tensor with all zero entries except unity at the position $i_1,\cdots,i_M,j_1,\cdots,j_M$; then, we have
\begin{eqnarray}
\sum\limits_{i_1,\cdots,i_M,j_1,\cdots,j_M} \left(a_{i_1,\cdots,i_M,j_1,\cdots,j_M} \mathcal{E}_{i_1,\cdots,i_M,j_1,\cdots,j_M} \right) \left( \overline{a_{i_1,\cdots,i_M,j_1,\cdots,j_M} \mathcal{E}_{i_1,\cdots,i_M,j_1,\cdots,j_M} } \right)  \nonumber \\
= \sum\limits_{i_1,\cdots,i_M} \left( \sum\limits_{j_1,\cdots,j_M} \left\| a_{i_1,\cdots,i_M,j_1,\cdots,j_M}\right\|^2 \right)    \mathcal{E}_{i_1,\cdots,i_M,i_1,\cdots,i_M} ~~~~~~~~~~~~~~~~~~~~~~~~~~~~~~ \nonumber \\
= \mbox{diag} \left(\left\Vert a_{1,\cdots,1:}\right\Vert^2, \left\Vert a_{1,\cdots,2:}\right\Vert^2, \cdots, \left\Vert a_{I_1,\cdots,I_M:}\right\Vert^2\right),~~~~~~~~~~~~~~~~~~~~~~~~~~~~~~~~~~~~
\end{eqnarray}
and, similarly, 
\begin{eqnarray}
\sum\limits_{i_1,\cdots,i_M,j_1,\cdots,j_M} \left( \overline{a_{i_1,\cdots,i_M,j_1,\cdots,j_M} \mathcal{E}_{i_1,\cdots,i_M,j_1,\cdots,j_M}} \right) \left(a_{i_1,\cdots,i_M,j_1,\cdots,j_M} \mathcal{E}_{i_1,\cdots,i_M,j_1,\cdots,j_M} \right)   \nonumber \\ 
= \sum\limits_{j_1,\cdots,j_M} \left( \sum\limits_{i_1,\cdots,i_M} \left\| a_{i_1,\cdots,i_M,j_1,\cdots,j_M}\right\|^2 \right)    \mathcal{E}_{j_1,\cdots,j_M,j_1,\cdots,j_M} ~~~~~~~~~~~~~~~~~~~~~~~~~~~~~~ \nonumber \\
= \mbox{diag} \left(\left\Vert a_{:1,\cdots,1}\right\Vert^2, \left\Vert a_{:,1,\cdots,2}\right\Vert^2, \cdots, \left\Vert a_{:,J_1,\cdots,J_M}\right\Vert^2\right).~~~~~~~~~~~~~~~~~~~~~~~~~~~~~~~~~~~~
\end{eqnarray}
Therefore, we have 
\begin{eqnarray}
\sigma^2 &=& \max\Bigg\{\mbox{diag} \left(\left\Vert a_{1,\cdots,1:}\right\Vert^2, \left\Vert a_{1,\cdots,2:}\right\Vert^2, \cdots, \left\Vert a_{I_1,\cdots,I_M:}\right\Vert^2\right), \nonumber \\
&  & \mbox{diag} \left(\left\Vert a_{:1,\cdots,1}\right\Vert^2, \left\Vert a_{:,1,\cdots,2}\right\Vert^2, \cdots, \left\Vert a_{:,J_1,\cdots,J_M}\right\Vert^2\right) \Bigg\} \nonumber \\
&=& \max \Bigg\{\max_{i_1,\cdots,i_M} \left\Vert a_{i_1,\cdots,i_M,:}\right\Vert^2, 
\max_{j_1,\cdots,j_M} \left\Vert a_{:,j_1,\cdots,j_M}\right\Vert^2 \Bigg\}.
\end{eqnarray}
Finally, from Corollary~\ref{cor:Rectangular Tensor with Gaussian and Rademacher Series}, this Corollary is proved.
\end{proof}

\subsection{Lower and Upper Bounds of Tensor Expectation}\label{sec:Lower and Upper Bounds of Tensor Expectation}
Given a finite sequence $\mathcal{A}_i$ of fixed Hermitian tensors with dimensions as $\mathbb{C}^{I_1 \times \cdots \times I_M \times I_1 \times \cdots \times I_M}$, and let $\{ \alpha_i \}$ be a finite sequence of indepedent normal variables. We define following random tensor 
\begin{eqnarray}
\mathcal{X} = \sum\limits_{i=1}^n \alpha_i \mathcal{A}_i.
\end{eqnarray}
From Theorem~\ref{thm:TensorGaussianNormalSeries}, we have 
\begin{eqnarray}\label{eq:upper bound for exp tensor}
\mathbb{E}\left( \left\Vert \mathcal{X} \right\Vert^2 \right)& = & \int_0^{\infty} \mathrm{Pr}\left(
\left\Vert \mathcal{X} \right\Vert > \sqrt{t} \right) dt \leq 2 \sigma^2 \log(2 \mathbb{I}_1^M)  \nonumber \\
&  &
+ 2 (I_1\cdots I_M) \int_{2 \sigma^2 \log(2\mathbb{I}_1^M)}^{\infty} = 2 \sigma^2 \log (2 e \mathbb{I}_1^M).
\end{eqnarray}
On the other hand, from Jesen's inequality, we have 
\begin{eqnarray}\label{eq:lower bound for exp tensor}
\mathbb{E}\left( \left\Vert \mathcal{X} \right\Vert^2 \right) = \mathbb{E} \left\Vert \mathcal{X}^2 \right\Vert  \geq \left\Vert \mathbb{E}(\mathcal{X}^2) \right\Vert =  \left\Vert \sum\limits_{i=1}^n \mathcal{A}^2_i\right\Vert = \sigma^2.
\end{eqnarray}
From both Eqs.~\eqref{eq:upper bound for exp tensor} and~\eqref{eq:lower bound for exp tensor},
we have following relation:
\begin{eqnarray}
c \sigma \leq \mathbb{E} \left\Vert \mathcal{X} \right\Vert \leq \sigma \sqrt{2 \log (2 e \mathbb{I}_1^M)}.
\end{eqnarray}
This shows that the tensor variance parameter $\sigma^2$ controls the expected norm $ \mathbb{E} \left\Vert \mathcal{X} \right\Vert$ with square root of logarithmic function for the tensor dimensions. 

\section{Tensor Chernoff Bounds}\label{sec:Tensor Chernoff Bounds}

The traditional Chernoff bounds concern the sum of independent, nonnegative, and uniformly bounded random variables. In this work, we will try to extend such Chernoff bounds under the scenario of random tensors.

\subsection{Tensor Chernoff Bounds Derivations}\label{sec:Tensor Chernoff Bounds Derivations}

We begin to present a Lemma about the semidefinite relation for the tensor moment-generating function of a random positive semidefinite contraction.

\begin{lemma}\label{lma:Chernoff MGF}
Given a random positive semifefinite tensor with $\lambda_{\max}(\mathcal{X})\leq 1$, then, for any $t \in \mathbb{R}$, we have 
\begin{eqnarray}
 \mathcal{I}+ (e^t - 1) \mathbb{E} \mathcal{X} \succeq \mathbb{E} e^{t \mathcal{X}}.
\end{eqnarray}
\end{lemma}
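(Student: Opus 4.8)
The plan is to reduce the tensor inequality to a scalar inequality via the spectral mapping theorem, exactly as in the matrix case. First I would record the elementary scalar fact that the function $x \mapsto e^{tx}$ is convex, so on the interval $[0,1]$ it lies below the chord joining its endpoints: for every $x \in [0,1]$,
\begin{eqnarray}\label{eq:chord}
e^{tx} \leq 1 + (e^t - 1)\,x.
\end{eqnarray}
This is where the hypothesis $\mathcal{X} \succeq \mathcal{O}$ and $\lambda_{\max}(\mathcal{X}) \leq 1$ enters: it guarantees every eigenvalue of $\mathcal{X}$ lies in $[0,1]$, which is precisely the range on which \eqref{eq:chord} is valid.

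Next I would lift \eqref{eq:chord} to tensors using the operator monotonicity/transfer principle stated in Eq.~\eqref{eq:tensor psd ordering}: since $f(x) = 1 + (e^t-1)x$ dominates $g(x) = e^{tx}$ pointwise on $[0,1]$, and the eigenvalues of $\mathcal{X}$ lie in $[0,1]$, we get the semidefinite relation
\begin{eqnarray}
e^{t\mathcal{X}} \preceq \mathcal{I} + (e^t - 1)\,\mathcal{X}.
\end{eqnarray}
Here I am using that $f(\mathcal{X}) = \mathcal{I} + (e^t-1)\mathcal{X}$ because the identity and linear functions act entrywise on the spectral decomposition $\mathcal{X} = \mathcal{U} \star_M \Lambda \star_M \mathcal{U}^H$, so $f(\mathcal{X})$ really is the tensor $\mathcal{I} + (e^t-1)\mathcal{X}$ and not some more complicated object.

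Finally I would take expectations of both sides. Since expectation preserves the semidefinite order (stated in Section~\ref{sec:Tensor Moments and Cumulants}) and is linear, applying $\mathbb{E}$ to $e^{t\mathcal{X}} \preceq \mathcal{I} + (e^t-1)\mathcal{X}$ yields
\begin{eqnarray}
\mathbb{E}\, e^{t\mathcal{X}} \preceq \mathcal{I} + (e^t - 1)\,\mathbb{E}\mathcal{X},
\end{eqnarray}
which is the claimed bound. I do not anticipate a serious obstacle here — the argument is a direct transcription of the matrix proof — but the one point requiring a little care is the justification of \eqref{eq:chord} on the full interval including the endpoint $0$ and the fact that it holds for \emph{all} real $t$ (for $t = 0$ both sides equal $1$; for $t \neq 0$ it is strict convexity of the exponential), together with checking that the transfer principle in Eq.~\eqref{eq:tensor psd ordering} is being applied with the correct spectral interval, namely $[0,1]$, which is exactly what the contraction hypothesis supplies.
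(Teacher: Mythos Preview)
Your proposal is correct and follows essentially the same approach as the paper: the paper also invokes the convexity of $x \mapsto e^{tx}$ to obtain the chord inequality $1 + (e^t - 1)x \geq e^{tx}$ on $[0,1]$, transfers it to the tensor level via Eq.~\eqref{eq:tensor psd ordering}, and then takes expectations. Your write-up is, if anything, slightly more careful in spelling out why $f(\mathcal{X}) = \mathcal{I} + (e^t-1)\mathcal{X}$ and why the expectation step preserves the order, but the argument is the same.
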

\begin{proof}
Consider a convex function $g(x) = e^{t x}$, we have  
\begin{eqnarray}
1 + (e^t - 1)x \geq g(x),
\end{eqnarray}
where $x \in [0,1]$. Since the eigenvalues of the random tensor $\mathcal{X}$ lie in the interval $[0, 1]$, from Eq.~\eqref{eq:tensor psd ordering}, we obtain
\begin{eqnarray}
\mathcal{I} + (e^t - 1)\mathcal{X} \succeq e^{t \mathcal{X}}.
\end{eqnarray}
Then, this Lemma is proved by taking the expectation with respect to the random tensor $\mathcal{X}$.
\end{proof}

Given two real values $a, b \in [0, 1]$, we define \emph{binary information divergence} of $a$ and $b$, expressed by $\mathfrak{D}(a || b)$, as 
\begin{eqnarray}\label{eq:def of binary info div}
\mathfrak{D}(a || b) \define a \log \frac{a}{b} + (1-a) \frac{1-a}{1-b}.
\end{eqnarray}    

We are ready to present tensor Chernoff inequality.
\begin{theorem}[Tensor Chernoff Bound I]\label{thm:TensorChernoffBoundI}
Consider a sequence $\{ \mathcal{X}_i  \in \mathbb{C}^{I_1 \times \cdots \times I_M  \times I_1 \times \cdots \times I_M } \}$ of independent, random, Hermitian tensors that satisfy
\begin{eqnarray}
\mathcal{X}_i \succeq \mathcal{O} \mbox{~~and~~} \lambda_{\max}(\mathcal{X}_i) \leq 1 
\mbox{~~ almost surely.}
\end{eqnarray}
Define following two quantaties:
\begin{eqnarray}
\overline{\mu}_{\max} \define \lambda_{\max}\left( \frac{1}{n} \sum\limits_{i=1}^{n} \mathbb{E} \mathcal{X}_i \right) \mbox{~~and~~} 
\overline{\mu}_{\min} \define \lambda_{\min}\left( \frac{1}{n} \sum\limits_{i=1}^{n} \mathbb{E} \mathcal{X}_i \right),
\end{eqnarray}
then, we have following two inequalities:
\begin{eqnarray}\label{eq:Chernoff I Upper Bound}
\mathrm{Pr} \left( \lambda_{\max}\left( \frac{1}{n}\sum\limits_{i=1}^{n} \mathcal{X}_i \right) \geq \theta \right) \leq \mathbb{I}_1^M e^{- n \mathfrak{D}(\theta || \overline{\mu}_{\max} )},\mbox{~~ for $\overline{\mu}_{\max} \leq \theta \leq 1$;}
\end{eqnarray}
and
\begin{eqnarray}\label{eq:Chernoff I Lower Bound}
\mathrm{Pr} \left( \lambda_{\min}\left( \frac{1}{n}\sum\limits_{i=1}^{n} \mathcal{X}_i \right) \leq \theta \right) \leq \mathbb{I}_1^M e^{- n \mathfrak{D}(\theta || \overline{\mu}_{\min} )},\mbox{~~ for $0 \leq \theta \leq \overline{\mu}_{\min}$.}
\end{eqnarray}
\end{theorem}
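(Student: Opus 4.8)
The strategy is to feed the scalar Chernoff moment-generating-function comparison of Lemma~\ref{lma:Chernoff MGF} into the dimension-dependent master bound of Corollary~\ref{cor:3_9}, and then collapse the resulting infimum over $t>0$ to the familiar one-dimensional Chernoff optimization. \textbf{Step 1 (rescale and invoke the master bound).} Since the event $\{\lambda_{\max}(\frac{1}{n}\sum_i\mathcal{X}_i)\geq\theta\}$ coincides with $\{\lambda_{\max}(\sum_i\mathcal{X}_i)\geq n\theta\}$, Corollary~\ref{cor:3_9} applied at threshold $n\theta$ gives
\begin{eqnarray*}
\mathrm{Pr}\left(\lambda_{\max}\left(\frac{1}{n}\sum_{i=1}^n\mathcal{X}_i\right)\geq\theta\right)
&=& \mathrm{Pr}\left(\lambda_{\max}\left(\sum_{i=1}^n\mathcal{X}_i\right)\geq n\theta\right) \\
&\leq& \mathbb{I}_1^M\inf_{t>0}\left\{\exp\left[-nt\theta+n\log\lambda_{\max}\left(\frac{1}{n}\sum_{i=1}^n\mathbb{E}e^{t\mathcal{X}_i}\right)\right]\right\}.
\end{eqnarray*}
Because $\mathcal{X}_i\succeq\mathcal{O}$ and $\lambda_{\max}(\mathcal{X}_i)\leq1$, Lemma~\ref{lma:Chernoff MGF} yields $\mathbb{E}e^{t\mathcal{X}_i}\preceq\mathcal{I}+(e^t-1)\mathbb{E}\mathcal{X}_i$. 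Averaging over $i$, and using that $e^t-1>0$ for $t>0$ together with Loewner-monotonicity of $\lambda_{\max}$ and the spectral mapping theorem, I obtain $\lambda_{\max}(\frac{1}{n}\sum_i\mathbb{E}e^{t\mathcal{X}_i})\leq 1+(e^t-1)\overline{\mu}_{\max}$, so the right-hand side becomes $\mathbb{I}_1^M\inf_{t>0}\{(e^{-t\theta}(1+(e^t-1)\overline{\mu}_{\max}))^n\}$.

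\textbf{Step 2 (scalar optimization).} It then remains to minimize $\varphi(t)\define -t\theta+\log(1+(e^t-1)\overline{\mu}_{\max})$ over $t>0$. A direct computation shows $\varphi$ is convex, with $\varphi'(0^+)=\overline{\mu}_{\max}-\theta$ and $\varphi'(t)\to 1-\theta$ as $t\to\infty$; hence under the hypothesis $\overline{\mu}_{\max}\leq\theta\leq1$ there is a unique minimizer $t^\star\in[0,\infty)$ given by $e^{t^\star}=\frac{\theta(1-\overline{\mu}_{\max})}{\overline{\mu}_{\max}(1-\theta)}\geq1$. Substituting $t^\star$ back gives $1+(e^{t^\star}-1)\overline{\mu}_{\max}=\frac{1-\overline{\mu}_{\max}}{1-\theta}$, whence $e^{\varphi(t^\star)}=(\overline{\mu}_{\max}/\theta)^{\theta}\,((1-\overline{\mu}_{\max})/(1-\theta))^{1-\theta}$, i.e. $\varphi(t^\star)=-\mathfrak{D}(\theta\parallel\overline{\mu}_{\max})$. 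Raising to the $n$th power establishes inequality~\eqref{eq:Chernoff I Upper Bound}.

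\textbf{Step 3 (lower bound by complementation).} For~\eqref{eq:Chernoff I Lower Bound} I would apply the bound just proved to the tensors $\mathcal{Y}_i\define\mathcal{I}-\mathcal{X}_i$: the hypotheses $\mathcal{O}\preceq\mathcal{X}_i\preceq\mathcal{I}$ give $\mathcal{O}\preceq\mathcal{Y}_i\preceq\mathcal{I}$, so the $\mathcal{Y}_i$ obey the same standing assumptions. Since $\lambda_{\min}(\frac{1}{n}\sum_i\mathcal{X}_i)\leq\theta$ is equivalent to $\lambda_{\max}(\frac{1}{n}\sum_i\mathcal{Y}_i)\geq 1-\theta$ and $\lambda_{\max}(\frac{1}{n}\sum_i\mathbb{E}\mathcal{Y}_i)=1-\overline{\mu}_{\min}$, the upper bound applies with parameters $(1-\theta,\,1-\overline{\mu}_{\min})$, whose admissibility constraint $1-\overline{\mu}_{\min}\leq 1-\theta\leq1$ is precisely $0\leq\theta\leq\overline{\mu}_{\min}$; the conclusion follows from the symmetry $\mathfrak{D}(1-\theta\parallel 1-\overline{\mu}_{\min})=\mathfrak{D}(\theta\parallel\overline{\mu}_{\min})$.

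I expect the only genuinely delicate points to be (i) the passage from $\lambda_{\max}$ of the tensor average $\frac{1}{n}\sum_i\mathbb{E}e^{t\mathcal{X}_i}$ to the scalar quantity $1+(e^t-1)\overline{\mu}_{\max}$, which uses Loewner-monotonicity of $\lambda_{\max}$ and is valid only because $e^t-1>0$ on the feasible set $t>0$, and (ii) checking that the stationary point $t^\star$ of $\varphi$ indeed lies in $(0,\infty)$ — this is exactly where the interval hypotheses $\overline{\mu}_{\max}\leq\theta\leq1$ (respectively $0\leq\theta\leq\overline{\mu}_{\min}$) are consumed. Everything else is the classical scalar Chernoff algebra and should be routine.
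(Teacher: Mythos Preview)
Your argument is correct. For the upper bound~\eqref{eq:Chernoff I Upper Bound} your proof is essentially identical to the paper's: both feed Lemma~\ref{lma:Chernoff MGF} into Corollary~\ref{cor:3_9}, reduce to the scalar function $-t\theta+\log(1+(e^t-1)\overline{\mu}_{\max})$, and optimize over $t>0$ to land on the binary divergence. The only cosmetic difference is that the paper carries the un-normalized sum and substitutes $\alpha\to n\theta$ at the end, whereas you rescale at the start.

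For the lower bound~\eqref{eq:Chernoff I Lower Bound} you take a genuinely different route. The paper argues directly: it applies the Chernoff MGF bound to $-\mathcal{X}_i$ (equivalently, uses Lemma~\ref{lma:Chernoff MGF} with negative $t$) and repeats the Corollary~\ref{cor:3_9} calculation and scalar optimization a second time, now tracking $\overline{\mu}_{\min}$ via $\lambda_{\max}(-\mathcal{A})=-\lambda_{\min}(\mathcal{A})$. Your complementation trick $\mathcal{Y}_i=\mathcal{I}-\mathcal{X}_i$ instead reduces the lower-tail statement to an instance of the already-established upper-tail bound, and then closes via the symmetry $\mathfrak{D}(1-\theta\parallel 1-\overline{\mu}_{\min})=\mathfrak{D}(\theta\parallel\overline{\mu}_{\min})$. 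Your approach is slightly cleaner in that it avoids redoing the optimization and makes the role of the hypothesis $0\leq\theta\leq\overline{\mu}_{\min}$ transparent; the paper's direct approach has the minor advantage of not requiring the two-sided bound $\mathcal{O}\preceq\mathcal{X}_i\preceq\mathcal{I}$ to be invoked simultaneously. Both are standard and equally valid.
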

\begin{proof}
From Lemma~\ref{lma:Chernoff MGF}, we have 
\begin{eqnarray}
\mathcal{I} + f(t) \mathbb{E} \mathcal{X}_i \succeq \mathbb{E} e^{t \mathcal{X}_i},
\end{eqnarray}
where $f(t) \define e^t -1$ for $t > 0$.
By applying Corollary~\ref{cor:3_9}, we obtain
\begin{eqnarray}\label{eq1:Chernoff I Upper Bound proof}
\mathrm{Pr} \left( \lambda_{\max}\left( \sum\limits_{i=1}^{n} \mathcal{X}_i \right) \geq \alpha \right) &\leq& \mathbb{I}_1^M \exp\left( - t \alpha + n \log\lambda_{\max}\left( \frac{1}{n} \sum\limits_{i=1}^{n} \left( \mathcal{I} + f(t) \mathbb{E} \mathcal{X}_i \right) \right) \right) \nonumber \\
&= &  \mathbb{I}_1^M \exp\left( - t \alpha+ n \log\lambda_{\max}\left(  \mathcal{I} + f(t)  \frac{1}{n} \sum\limits_{i=1}^{n} \mathbb{E} \mathcal{X}_i \right) \right) \nonumber \\ 
&=& \mathbb{I}_1^M \exp\left( - t \alpha + n \log \left(1 + f(t) \overline{\mu}_{\max} \right) \right) .
\end{eqnarray}
The last equality follows from the definition of $\overline{\mu}_{\max}$ and the eigenvalue map properties. When the value $t$ at the right-hand side of  Eq.~\eqref{eq1:Chernoff I Upper Bound proof} is 
\begin{eqnarray}\label{eq2:Chernoff I Upper Bound proof}
t = \log \frac{\alpha}{1 - \alpha} - \log \frac{\overline{\mu}_{\max}}{1 - \overline{\mu}_{\max}},
\end{eqnarray}
we can achieve the tightest upper bound at Eq.~\eqref{eq1:Chernoff I Upper Bound proof}. By substituting the value $t$ in Eq.~\eqref{eq2:Chernoff I Upper Bound proof} into Eq.~\eqref{eq1:Chernoff I Upper Bound proof} and change the variable $\alpha \rightarrow n \theta$, Eq.~\eqref{eq:Chernoff I Upper Bound} is proved. The next goal is to prove Eq.~\eqref{eq:Chernoff I Lower Bound}.

If we apply Lemma~\ref{lma:Chernoff MGF} to the sequence $\{- \mathcal{X}_i \}$, we have 
\begin{eqnarray}
\mathcal{I} - g(t) \mathbb{E} \mathcal{X}_i \succeq \mathbb{E} e^{t (-\mathcal{X}_i)},
\end{eqnarray}
where $g(t) \define 1 - e^t$ for $t > 0$.
By applying Corollary~\ref{cor:3_9} again, we obtain
\begin{eqnarray}\label{eq1:Chernoff I Lower Bound proof}
\mathrm{Pr} \left( \lambda_{\min}\left( \sum\limits_{i=1}^{n} \mathcal{X}_i \right) \leq \alpha \right) &=& \mathrm{Pr} \left( \lambda_{\max}\left( \sum\limits_{i=1}^{n} \left( - \mathcal{X}_i \right)\right) \geq \alpha \right) 
\nonumber \\
&\leq& \mathbb{I}_1^M \exp\left( t \alpha + n \log\lambda_{\max}\left( \frac{1}{n} \sum\limits_{i=1}^{n} \left( \mathcal{I} - g(t) \mathbb{E} \mathcal{X}_i \right) \right) \right) \nonumber \\
&=_1 &  \mathbb{I}_1^M \exp\left( t \alpha + n \log \left( 1 -f(t)  \lambda_{\min}\left(    \frac{1}{n} \sum\limits_{i=1}^{n} \mathbb{E} \mathcal{X}_i \right) \right) \right) \nonumber \\ 
&=& \mathbb{I}_1^M \exp\left( t \alpha + n \log \left(1 - g(t) \overline{\mu}_{\min} \right) \right),
\end{eqnarray}
where we apply the relation $\lambda_{\min}( -\frac{1}{n} \sum\limits_{i=1}^{n} \mathbb{E} \mathcal{X}_i  ) = -\lambda_{\max} (\frac{1}{n} \sum\limits_{i=1}^{n} \mathbb{E} \mathcal{X}_i )$ at the equality $=_1$. When the value $t$ at the right-hand side of  Eq.~\eqref{eq1:Chernoff I Lower Bound proof} is 
\begin{eqnarray}\label{eq2:Chernoff I Lower Bound proof}
t = log \frac{\overline{\mu}_{\max}}{1 - \overline{\mu}_{\max}} -  \log \frac{\alpha}{1 - \alpha},
\end{eqnarray}
we can achieve the tightest upper bound at Eq.~\eqref{eq1:Chernoff I Lower Bound proof}. By substituting the value $t$ in Eq.~\eqref{eq2:Chernoff I Lower Bound proof} into Eq.~\eqref{eq1:Chernoff I Lower Bound proof} and change the variable $\alpha \rightarrow n \theta$, Eq.~\eqref{eq:Chernoff I Lower Bound} is proved also.
\end{proof}

The tensor Chernoff bounds discussed at Theorem~\ref{thm:TensorChernoffBoundI} is not related to $\mu_{\max}$ and $\mu_{\min}$ directly. Following theorem is another version of tensor Chernoff bounds to associate the probability range in terms of $\mu_{\max}$ and $\mu_{\min}$ directly and this format of tensor Chernoff bounds are easier to be applied. 

\begin{theorem}[Tensor Chernoff Bound II]\label{thm:TensorChernoffBoundII}
Consider a sequence $\{ \mathcal{X}_i  \in \mathbb{C}^{I_1 \times \cdots \times I_M  \times I_1 \times \cdots \times I_M } \}$ of independent, random, Hermitian tensors that satisfy
\begin{eqnarray}
\mathcal{X}_i \succeq \mathcal{O} \mbox{~~and~~} \lambda_{\max}(\mathcal{X}_i) \leq T
\mbox{~~ almost surely.}
\end{eqnarray}
Define following two quantaties:
\begin{eqnarray}
\mu_{\max} \define \lambda_{\max}\left( \sum\limits_{i=1}^{n} \mathbb{E} \mathcal{X}_i \right) \mbox{~~and~~} 
\mu_{\min} \define \lambda_{\min}\left( \sum\limits_{i=1}^{n} \mathbb{E} \mathcal{X}_i \right),
\end{eqnarray}
then, we have following two inequalities:
\begin{eqnarray}\label{eq:Chernoff II Upper Bound}
\mathrm{Pr} \left( \lambda_{\max}\left( \sum\limits_{i=1}^{n} \mathcal{X}_i \right) \geq (1+\theta) \mu_{\max} \right) \leq \mathbb{I}_1^M \left(\frac{e^{\theta}}{ (1 + \theta)^{1 + \theta}  }\right)^{\mu_{\max}/T} ,\mbox{~~ for $\theta \geq 0$;}
\end{eqnarray}
and
\begin{eqnarray}\label{eq:Chernoff II Lower Bound}
\mathrm{Pr} \left( \lambda_{\min}\left( \sum\limits_{i=1}^{n} \mathcal{X}_i \right) \leq (1 - \theta) \mu_{\min} \right) \leq \mathbb{I}_1^M \left(\frac{e^{-\theta}}{ (1 - \theta)^{1 - \theta}  }\right)^{\mu_{\min}/T} ,\mbox{~~ for $\theta \in [0,1]$.}
\end{eqnarray}
\end{theorem}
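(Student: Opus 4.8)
The plan is to rescale so that Lemma~\ref{lma:Chernoff MGF} applies, produce a uniform semidefinite bound on each tensor cumulant-generating function, and then feed this into Corollary~\ref{cor:3_7}; the two final exponents come from a one-variable minimization. Concretely, set $\mathcal{Y}_i \define T^{-1}\mathcal{X}_i$, so that $\mathcal{Y}_i \succeq \mathcal{O}$ and $\lambda_{\max}(\mathcal{Y}_i)\le 1$ almost surely. By Lemma~\ref{lma:Chernoff MGF}, for every $t\in\mathbb{R}$ we have $\mathbb{E}e^{t\mathcal{Y}_i}\preceq \mathcal{I}+(e^{t}-1)\mathbb{E}\mathcal{Y}_i$. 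Since $\mathcal{I}$ and $(e^{t}-1)\mathbb{E}\mathcal{Y}_i$ commute, the scalar estimate $\log(1+x)\le x$ together with Eq.~\eqref{eq:tensor psd ordering} gives $\log\big(\mathcal{I}+(e^{t}-1)\mathbb{E}\mathcal{Y}_i\big)\preceq (e^{t}-1)\mathbb{E}\mathcal{Y}_i$, and the monotonicity of the tensor logarithm in Eq.~\eqref{eq:log monotone} yields
\begin{equation}
\log\mathbb{E}e^{t\mathcal{Y}_i}\;\preceq\;(e^{t}-1)\,\mathbb{E}\mathcal{Y}_i,\qquad t>0 .
\end{equation}
Thus the hypothesis of Corollary~\ref{cor:3_7} holds for $\{\mathcal{Y}_i\}$ with $f(t)=e^{t}-1$ and non-random tensors $\mathcal{A}_i=\mathbb{E}\mathcal{Y}_i=T^{-1}\mathbb{E}\mathcal{X}_i$; since $\mathcal{Y}_i\preceq\mathcal{I}$ almost surely we have $\mathbb{E}\mathcal{Y}_i\preceq\mathcal{I}$, so the tensor inside the logarithm is positive definite and the manipulation is legitimate.

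For the upper bound, observe that $\lambda_{\max}\big(\sum_i\mathcal{Y}_i\big)=T^{-1}\lambda_{\max}\big(\sum_i\mathcal{X}_i\big)$ and $\lambda_{\max}\big(\sum_i\mathcal{A}_i\big)=T^{-1}\mu_{\max}$, so the event in Eq.~\eqref{eq:Chernoff II Upper Bound} equals $\{\lambda_{\max}(\sum_i\mathcal{Y}_i)\ge (1+\theta)\mu_{\max}/T\}$. Applying Corollary~\ref{cor:3_7} with this threshold bounds its probability by $\mathbb{I}_1^M\exp\!\big[(\mu_{\max}/T)\inf_{t>0}\{-(1+\theta)t+e^{t}-1\}\big]$; the infimum is attained at $t=\log(1+\theta)$ and equals $\theta-(1+\theta)\log(1+\theta)$, which is exactly the exponent in Eq.~\eqref{eq:Chernoff II Upper Bound}.

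For the lower bound, apply Lemma~\ref{lma:Chernoff MGF} with $-t$ in place of $t$ and argue as above to obtain $\log\mathbb{E}e^{-t\mathcal{Y}_i}\preceq (e^{-t}-1)\mathbb{E}\mathcal{Y}_i$ for $t>0$; equivalently, Corollary~\ref{cor:3_7} applies to the independent Hermitian sequence $\{-\mathcal{Y}_i\}$ with the nonnegative function $f(t)=1-e^{-t}$ and non-random tensors $\mathcal{A}_i=-\mathbb{E}\mathcal{Y}_i$. Using $\lambda_{\min}(\mathcal{Z})\le s\iff\lambda_{\max}(-\mathcal{Z})\ge -s$ and $\lambda_{\max}\big(-\sum_i\mathbb{E}\mathcal{Y}_i\big)=-\mu_{\min}/T$, the event in Eq.~\eqref{eq:Chernoff II Lower Bound} becomes $\{\lambda_{\max}(\sum_i(-\mathcal{Y}_i))\ge -(1-\theta)\mu_{\min}/T\}$, a legitimate (possibly negative) threshold since Corollary~\ref{cor:3_7} holds for all real thresholds. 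Corollary~\ref{cor:3_7} then bounds its probability by $\mathbb{I}_1^M\exp\!\big[(\mu_{\min}/T)\inf_{t>0}\{(1-\theta)t-1+e^{-t}\}\big]$; the infimum is attained at $t=-\log(1-\theta)>0$ for $\theta\in(0,1)$ and equals $-\theta-(1-\theta)\log(1-\theta)$, which gives Eq.~\eqref{eq:Chernoff II Lower Bound}.

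The substantive steps are the two logarithm-transfer inequalities $\log\mathbb{E}e^{\pm t\mathcal{Y}_i}\preceq (e^{\pm t}-1)\mathbb{E}\mathcal{Y}_i$ and the bookkeeping needed to verify that the lower-bound application genuinely satisfies the $f:(0,\infty)\to[0,\infty]$ hypothesis of Corollary~\ref{cor:3_7}; everything else is the routine one-variable minimization of $-(1+\theta)t+e^{t}-1$ and $(1-\theta)t-1+e^{-t}$. An alternative would be to deduce both inequalities from Theorem~\ref{thm:TensorChernoffBoundI} by taking $\overline{\mu}_{\max}=\mu_{\max}/(nT)$ and rewriting $\mathfrak{D}$, but that route forces a delicate large-$n$ simplification that the direct argument sidesteps.
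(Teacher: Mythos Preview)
Your proof is correct and follows essentially the same route as the paper: both invoke Lemma~\ref{lma:Chernoff MGF}, relax via $\log(1+x)\le x$, and then minimize the resulting one-variable exponent at $t=\log(1\pm\theta)$. The only cosmetic difference is that the paper reaches the bound $\exp(-\delta t+f(\delta)\mu_{\max})$ by quoting the intermediate estimate Eq.~\eqref{eq1:Chernoff I Upper Bound proof} from the Chernoff~I proof (hence via Corollary~\ref{cor:3_9}) and applying the scalar logarithm inequality after the eigenvalue is extracted, whereas you obtain the identical bound through Corollary~\ref{cor:3_7} by using operator monotonicity of the tensor logarithm on each CGF separately.
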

\begin{proof}
Without loss of generality, we can assume $T=1$ in our proof. From  Eq.~\eqref{eq1:Chernoff I Upper Bound proof} and the inequality $\log(1 + x) \leq x$ for $x > -1$, we have 
\begin{eqnarray}\label{eq1:Chernoff II Upper Bound}
\mathrm{Pr}\left( \lambda_{\max}\left( \sum\limits_{i=1}^{n} \mathcal{X}_i \right) \geq t \right) \leq \mathbb{I}_1^{M}\exp(- \delta t + f(\delta) \mu_{\max})
\end{eqnarray}
By selecting $\delta = \log (1 + \theta)$ and $t \rightarrow (1 + \theta) \mu_{\max}$, we can establish  Eq.~\eqref{eq:Chernoff II Upper Bound}.

From  Eq.~\eqref{eq1:Chernoff I Lower Bound proof} and the inequality $\log(1 + x) \leq x$ for $x > -1$, we have 
\begin{eqnarray}\label{eq1:Chernoff II Lower Bound}
\mathrm{Pr}\left( \lambda_{\min}\left( \sum\limits_{i=1}^{n} \mathcal{X}_i \right) \leq t \right) \leq \mathbb{I}_1^{M}\exp(- \delta t - f(\delta) \mu_{\min})
\end{eqnarray}
By selecting $\delta =- \log (1 - \theta)$ and $t \rightarrow (1 - \theta) \mu_{\min}$, we can establish  Eq.~\eqref{eq:Chernoff II Lower Bound}. Therefore, this theorem is proved. 
\end{proof}

\subsection{Application of Tensor Chernoff Bounds}\label{sec:Application of Tensor Chernoff Bounds}


Consider a general random tensor $\mathcal{X} \in \mathbb{C}^{I_1 \times \cdots \times I_M \times J_1 \times \cdots \times J_N}$, we can express the $\mathcal{X}$ as 
\begin{eqnarray}
\mathcal{X} = [x_{1}, x_{2}, \cdots, x_{J_1\cdots J_N}], 
\end{eqnarray}
where $x_i$ is a family of independent random tensors in $ \mathbb{C}^{I_1 \times \cdots \times I_M}$ (vector part in $\mathcal{X}$. The squared norm of $\mathcal{X}$ can be expressed as 
\begin{eqnarray}\label{eq1:Remark 5_4}
\left\Vert \mathcal{X} \right\Vert^2 = \lambda_{\max}(\mathcal{X} \star_N \mathcal{X}^H) = \lambda_{\max}(\sum\limits_{i=1}^{J_1\cdots J_N} x_i \star_0 \overline{x_i}).
\end{eqnarray}
Similarly, for the minimum singular value of the tensor $\mathcal{X}$, we have 
\begin{eqnarray}\label{eq2:Remark 5_4}
\mbox{Minimum singular value of $\mathcal{X}$} = \lambda_{\min}(\mathcal{X} \star_N \mathcal{X}^H) = \lambda_{\min}(\sum\limits_{i=1}^{J_1\cdots J_N} x_i \star_0 x_i^H).
\end{eqnarray}
From tensor Chernoff bounds provided by Theorem~\ref{thm:TensorChernoffBoundII}, we can bound both Eqs.~\eqref{eq1:Remark 5_4} and~\eqref{eq2:Remark 5_4}. 

Another application of tensor Chernoff bounds is to estimate the expectation of the maximum eigenvalue of independent sum of random tensors. 
\begin{corollary}[Upper and Lower Bounds for the Maximum Eigenvalue]\label{cor:Bounds for the Maximum Eigenvalue}
Consider a sequence $\{ \mathcal{X}_i  \in \mathbb{C}^{I_1 \times \cdots \times I_M  \times I_1 \times \cdots \times I_M } \}$ of independent, random, Hermitian tensors that satisfy
\begin{eqnarray}
\mathcal{X}_i \succeq \mathcal{O} \mbox{~~and~~} \lambda_{\max}(\mathcal{X}_i) \leq T
\mbox{~~ almost surely.}
\end{eqnarray}
Then, we have
\begin{eqnarray}\label{eq1:cor:Bounds for the Maximum Eigenvalue}
\mu_{\max} \leq \mathbb{E} \lambda_{\max}\left( \sum\limits_{i=1}^n \mathcal{X}_i \right) \leq C \mathbb{I}_1^M e^{- \mu_{\max}/T},
\end{eqnarray}
where the constant value of $C$ is about 10.28.
\end{corollary}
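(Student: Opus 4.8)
I would prove the two inequalities in~\eqref{eq1:cor:Bounds for the Maximum Eigenvalue} by separate arguments. The left inequality is immediate from convexity: the map $\mathcal{Y}\mapsto\lambda_{\max}(\mathcal{Y})$ on Hermitian tensors is a pointwise supremum of linear functionals, hence convex, so Jensen's inequality applied to the random tensor $\sum_{i=1}^n\mathcal{X}_i$ gives $\mathbb{E}\,\lambda_{\max}\!\big(\sum_{i=1}^n\mathcal{X}_i\big)\ge\lambda_{\max}\!\big(\mathbb{E}\sum_{i=1}^n\mathcal{X}_i\big)=\lambda_{\max}\!\big(\sum_{i=1}^n\mathbb{E}\mathcal{X}_i\big)=\mu_{\max}$, the same manoeuvre used in~\eqref{eq:lower bound for exp tensor}. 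That settles the lower bound.

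For the upper bound, since each $\mathcal{X}_i\succeq\mathcal{O}$ the random variable $Z:=\lambda_{\max}\!\big(\sum_{i=1}^n\mathcal{X}_i\big)$ is nonnegative, so I would start from the layer-cake identity $\mathbb{E}Z=\int_0^\infty\Pr(Z\ge t)\,dt$, exactly as in~\eqref{eq:upper bound for exp tensor}. After rescaling so that $T=1$ (replace $\mathcal{X}_i$ by $\mathcal{X}_i/T$, which replaces $\mu_{\max}$ by $\mu_{\max}/T$), I split the integral at a threshold $t_0$ proportional to $\mu_{\max}$: on $[0,t_0]$ use $\Pr(Z\ge t)\le1$, and on $[t_0,\infty)$ put $t=(1+\theta)\mu_{\max}$ and apply Tensor Chernoff Bound~II, Eq.~\eqref{eq:Chernoff II Upper Bound}, which gives $\Pr\!\big(Z\ge(1+\theta)\mu_{\max}\big)\le\mathbb{I}_1^M\big(e^{\theta}(1+\theta)^{-(1+\theta)}\big)^{\mu_{\max}}$. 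Changing variables then produces a bound of the form
\begin{eqnarray*}
\mathbb{E}Z &\le& t_0+\mathbb{I}_1^M\,\mu_{\max}\int_{\theta_0}^{\infty}\Big(e^{\theta}(1+\theta)^{-(1+\theta)}\Big)^{\mu_{\max}}d\theta,\qquad \theta_0=t_0/\mu_{\max}-1,
\end{eqnarray*}
where the dimension factor $\mathbb{I}_1^M$ enters by bounding the trace of a positive tensor by $\mathbb{I}_1^M$ times its largest eigenvalue, as in Corollaries~\ref{cor:3_7} and~\ref{cor:3_9}; for the complementary regime of small $\mu_{\max}$ one can instead use the crude bound $\lambda_{\max}\!\big(\sum_i\mathcal{X}_i\big)\le\mathrm{Tr}\!\big(\sum_i\mathcal{X}_i\big)$, whose expectation equals $\mathrm{Tr}\!\big(\sum_i\mathbb{E}\mathcal{X}_i\big)\le\mathbb{I}_1^M\mu_{\max}$.

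The remaining step, where the explicit constant $C\approx 10.28$ is pinned down, is to bound the tail integral uniformly in $\mu_{\max}$ and then optimize $t_0$. Writing $e^{\theta}(1+\theta)^{-(1+\theta)}=e^{\theta-(1+\theta)\log(1+\theta)}$, the exponent is negative and strictly decreasing for $\theta>0$ and grows like $-\theta\log\theta$ at infinity, so for $\mu_{\max}\ge 1$ the integrand is dominated by its value at $\mu_{\max}=1$ and the problem collapses to estimating the finite one-variable integral $\int_0^\infty e^{\theta}(1+\theta)^{-(1+\theta)}\,d\theta$, for which a simple majorant (a Gaussian comparison near $\theta=0$ together with an exponential tail further out) suffices. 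Substituting this estimate back and choosing the threshold that balances the two contributions yields the stated constant. I expect this last step to be the main obstacle: the integrand is not elementary, so one has to commit to an explicit majorization and then certify the optimized constant numerically, whereas everything before it is routine once Tensor Chernoff Bound~II is in hand.
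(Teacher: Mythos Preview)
Your lower-bound argument matches the paper's exactly: convexity of $\lambda_{\max}$ plus Jensen.

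For the upper bound, your strategy diverges from the paper's in a way that makes the computation much harder than necessary. You invoke the \emph{already-optimized} Chernoff tail~\eqref{eq:Chernoff II Upper Bound}, in which the exponential parameter has been minimized for each fixed $\theta$; the resulting integrand $\exp\!\big(\theta-(1+\theta)\log(1+\theta)\big)$ is, as you say, not elementary and forces you into threshold splits, majorizations, and a separate treatment of small $\mu_{\max}$ whose outcome you cannot pin down. The paper instead steps back to the \emph{pre-optimized} bound~\eqref{eq1:Chernoff II Upper Bound},
\[
\Pr\Big(\lambda_{\max}\Big(\sum_i\mathcal{X}_i\Big)\ge t\Big)\le\mathbb{I}_1^M\exp\!\big(-\delta t+(e^\delta-1)\mu_{\max}/T\big),
\]
with the Laplace parameter $\delta>0$ still free. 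Because this is a pure exponential in $t$, the layer-cake integral over the whole half-line is immediate---no splitting at all---and yields a factor $1/\delta$. Only \emph{after} integrating does the paper optimize over the single scalar $\delta$: minimizing $e^{e^{\delta}}/\delta$ gives the stationarity condition $\delta e^{\delta}=1$, i.e.\ $\delta_{\mathrm{opt}}\approx 0.567$, and substituting this value produces $C\approx 10.28$ directly. In short, swapping the order of ``optimize in $\delta$'' and ``integrate in $t$''---keeping $\delta$ free until the end---collapses what you flagged as the main obstacle into a one-line calculus exercise, and is the mechanism behind the specific constant.
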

\begin{proof}
The lower bound at Eq.~\eqref{eq1:cor:Bounds for the Maximum Eigenvalue} is true from the convexity of the function $\mathcal{A} \rightarrow \lambda_{\max}(\mathcal{A})$ and the Jensen's inequality. 

For the upper bound, we have 
\begin{eqnarray}\label{eq2:cor:Bounds for the Maximum Eigenvalue}
\mathbb{E} \lambda_{\max}\left( \sum\limits_{i=1}^n \mathcal{X}_i \right)
&=& \int_{0}^{\infty} \mathrm{Pr} \left( \lambda_{\max}\left( \sum\limits_{i=1}^n \mathcal{X}_i \right)  \geq t \right) d t  \nonumber \\
&\leq_1& \int_{0}^{\infty} \mathbb{I}_1^{M}\exp(- \delta t + (e^{\delta} - 1) \mu_{\max}/T)dt \nonumber \\
&=&  \frac{e^{e^{\delta}}}{\delta} \mathbb{I}_1^M e^{- \mu_{\max}/T} \nonumber \\
&\leq&  \frac{e^{e^{\delta_{opt}}}}{\delta_{opt}} \mathbb{I}_1^M e^{- \mu_{\max}/T}
 = C \mathbb{I}_1^M e^{- \mu_{\max}/T},
\end{eqnarray}
where the inequality $\leq_1$ comes from  Eq.~\eqref{eq1:Chernoff II Upper Bound} with the scaling factor $T$. If we select $\theta$ as the solution of the following relation $e^{\delta_{opt}} = \frac{1}{\delta_{opt}}$ to minimize the right-hand side of Eq.~\eqref{eq2:cor:Bounds for the Maximum Eigenvalue},  we have the desired upper bound when $\delta_{opt} \approx = 0.56699$. This corollary is proved. 
\end{proof}


\section{Tensor Bernstein Bounds}\label{sec:Tensor Bernstein Bounds}

For random variables, Bernstein inequalities give the upper tail of a sum of
independent, zero-mean random variables that are either bounded or subexponential. In this section, we wish to extend Bernstein bounds for a sum of zero-mean random tensors.

\subsection{Tensor Bernstein Bounds Derivation}\label{sec:Tensor Bernstein Bounds Derivation}

We will condier bounded Tensor Bernstein bounds first by considering the bounded Bernstein moment-generating function with the following Lemma.
\begin{lemma}\label{lam:Bounded Bernstein mgf}
Given a random Hermitian tensor $\mathcal{X} \in \mathbb{C}^{I_1 \times \cdots \times I_M  \times I_1 \times \cdots \times I_M }$ that satisfies:
\begin{eqnarray}\label{eq1:lam:Bounded Bernstein mgf}
\mathbb{E} \mathcal{X} = 0 \mbox{~~and~~} \lambda_{\max}(\mathcal{X} ) \leq 1 
\mbox{~~almost surely.} 
\end{eqnarray}
Then, we have
\begin{eqnarray}
 e^{  (e^t - t - 1) \mathbb{E}(\mathcal{X}^2)}   \succeq  \mathbb{E}e^{t \mathcal{X}}
\end{eqnarray}
where $t > 0$.
\end{lemma}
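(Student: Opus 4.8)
The plan is to reuse the scalar-to-tensor transfer mechanism already employed for Lemma~\ref{lma:Chernoff MGF}: establish a polynomial upper bound for $e^{tx}$ on the relevant eigenvalue range, promote it to a semidefinite inequality for $e^{t\mathcal{X}}$ via Eq.~\eqref{eq:tensor psd ordering}, take expectations, and finally dominate the resulting affine tensor by an exponential. Concretely, for fixed $t>0$ define $g(x)\define (e^{x}-1-x)/x^{2}$ for $x\neq 0$ and $g(0)\define 1/2$. The key scalar claim is that $g$ is nondecreasing on all of $\mathbb{R}$; granting this, $x\mapsto t^{2}g(tx)$ is nondecreasing, so for every $x\leq 1$ we get $e^{tx}=1+tx+x^{2}\cdot t^{2}g(tx)\leq 1+tx+(e^{t}-t-1)x^{2}$, using $t^{2}g(t)=e^{t}-t-1$.

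Next I would transfer this to tensors. Since $\lambda_{\max}(\mathcal{X})\leq 1$ almost surely, all eigenvalues of $\mathcal{X}$ lie in a real interval of the form $(-\infty,1]$ on which the scalar bound above holds; applying Eq.~\eqref{eq:tensor psd ordering} with $f(x)=1+tx+(e^{t}-t-1)x^{2}$ and $g(x)=e^{tx}$ yields $e^{t\mathcal{X}}\preceq \mathcal{I}+t\mathcal{X}+(e^{t}-t-1)\,\mathcal{X}^{2}$ almost surely. Because expectation preserves the semidefinite order and $\mathbb{E}\mathcal{X}=0$, this gives $\mathbb{E}e^{t\mathcal{X}}\preceq \mathcal{I}+(e^{t}-t-1)\,\mathbb{E}(\mathcal{X}^{2})$. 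To close, set $\mathcal{A}\define (e^{t}-t-1)\,\mathbb{E}(\mathcal{X}^{2})$; note $e^{t}-t-1\geq 0$ for $t>0$ and $\mathbb{E}(\mathcal{X}^{2})\succeq \mathcal{O}$, so $\mathcal{A}$ is a Hermitian (indeed positive semidefinite) tensor, and one more application of Eq.~\eqref{eq:tensor psd ordering} with the scalar inequality $1+a\leq e^{a}$ gives $\mathcal{I}+\mathcal{A}\preceq e^{\mathcal{A}}$ (with $e^{\mathcal{A}}$ as in Definition~\ref{def: tensor exponential}). Chaining the two semidefinite inequalities produces $\mathbb{E}e^{t\mathcal{X}}\preceq e^{(e^{t}-t-1)\mathbb{E}(\mathcal{X}^{2})}$, as desired.

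The only step that is not a routine invocation of the preliminaries is the monotonicity of the scalar function $g$, and this is where I would spend the effort. I would verify it by differentiation: $g'(x)=h(x)/x^{3}$ with $h(x)=xe^{x}-2e^{x}+x+2$, where $h(0)=h'(0)=0$ and $h''(x)=xe^{x}$, so $h'$ attains its global minimum $0$ at the origin, hence $h$ is increasing with $h(0)=0$; consequently $h(x)$ and $x^{3}$ share the same sign, so $g'(x)\geq 0$ for $x\neq 0$, and $g$ extends monotonically across $0$ by continuity. (On the half-line $x\geq 0$ the series $g(x)=\sum_{k\geq 2}x^{k-2}/k!$ already makes monotonicity transparent; the derivative argument is needed to cover the sign-indefinite regime $x<0$, which matters here since only $\lambda_{\max}(\mathcal{X})$, not $\lambda_{\min}(\mathcal{X})$, is controlled.) Everything else — the transfer rule Eq.~\eqref{eq:tensor psd ordering}, monotonicity of expectation under $\preceq$, and $\mathcal{I}+\mathcal{A}\preceq e^{\mathcal{A}}$ — is standard and already available.
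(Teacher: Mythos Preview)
Your proof is correct and follows essentially the same route as the paper: establish the scalar bound $e^{tx}\leq 1+tx+(e^{t}-t-1)x^{2}$ via the monotonicity of $g$, promote it to tensors through Eq.~\eqref{eq:tensor psd ordering}, take expectations using $\mathbb{E}\mathcal{X}=0$, and finish with $\mathcal{I}+\mathcal{A}\preceq e^{\mathcal{A}}$. Your argument is in fact more careful than the paper's, which only asserts that $g$ is increasing on $(0,1]$ and does not address the negative eigenvalues that $\mathcal{X}$ may have under the sole hypothesis $\lambda_{\max}(\mathcal{X})\leq 1$; your derivative computation showing $g'\geq 0$ on all of $\mathbb{R}$ is precisely what is needed to close that gap.
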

\begin{proof}
If we define a real function $g(x) \define \frac{e^{t x} - t x - 1}{x^2}$, it is easy to see that this function $g(x)$ is an increasing function for $0 < x \leq 1$. From Eq~\eqref{eq:tensor psd ordering}, we have 
\begin{eqnarray}\label{eq2:lam:Bounded Bernstein mgf}
g(\mathcal{X}) \preceq g(1) \mathcal{I}.
\end{eqnarray}
Moreover, we also have
\begin{eqnarray}\label{eq3:lam:Bounded Bernstein mgf}
e^{t \mathcal{X}} &=& \mathcal{I} + t \mathcal{X} + g(\mathcal{X}) \star_M \mathcal{X}^2 \nonumber \\
&\preceq&  \mathcal{I} + t \mathcal{X} + g(1) \mathcal{X}^2,
\end{eqnarray}
where the $\preceq$ comes from Eq.~\eqref{eq2:lam:Bounded Bernstein mgf}. By taking the expectation for both sides of Eq.~\eqref{eq3:lam:Bounded Bernstein mgf}, we then obtain
\begin{eqnarray}
\mathbb{E}e^{t \mathcal{X}} &\preceq& \mathcal{I} + g(1) \mathbb{E}\left( \mathcal{X}^2\right) \preceq e^{g(1)\mathbb{E}\left( \mathcal{X}^2\right) } \nonumber \\
&=&   e^{ (e^t - t - 1) \mathbb{E}(\mathcal{X}^2)}.
\end{eqnarray}
This lemma is established. 
\end{proof}

We are ready to present the Tensor Bernstein bounds for random tensors with bounded $\lambda_{\max}$. 

\begin{theorem}[Bounded $\lambda_{\max}$ Tensor Bernstein Bounds]\label{thm:Bounded Tensor Bernstein}
Given a finite sequence of independent Hermitian tensors $\{ \mathcal{X}_i  \in \mathbb{C}^{I_1 \times \cdots \times I_M  \times I_1 \times \cdots \times I_M } \}$ that satisfy
\begin{eqnarray}\label{eq1:thm:Bounded Tensor Bernstein}
\mathbb{E} \mathcal{X}_i = 0 \mbox{~~and~~} \lambda_{\max}(\mathcal{X}_i) \leq T 
\mbox{~~almost surely.} 
\end{eqnarray}
Define the total varaince $\sigma^2$ as: $\sigma^2 \define \left\Vert \sum\limits_i^n \mathbb{E} \left( \mathcal{X}^2_i \right) \right\Vert$.
Then, we have following inequalities:
\begin{eqnarray}\label{eq2:thm:Bounded Tensor Bernstein}
\mathrm{Pr} \left( \lambda_{\max}\left( \sum\limits_{i=1}^{n} \mathcal{X}_i \right)\geq \theta \right) \leq \mathbb{I}_1^M \exp \left( \frac{-\theta^2/2}{\sigma^2 + T\theta/3}\right);
\end{eqnarray}
and
\begin{eqnarray}\label{eq3:thm:Bounded Tensor Bernstein}
\mathrm{Pr} \left( \lambda_{\max}\left( \sum\limits_{i=1}^{n} \mathcal{X}_i \right)\geq \theta \right) \leq \mathbb{I}_1^M \exp \left( \frac{-3 \theta^2}{ 8 \sigma^2}\right)~~\mbox{for $\theta \leq \sigma^2/T$};
\end{eqnarray}
and
\begin{eqnarray}\label{eq4:thm:Bounded Tensor Bernstein}
\mathrm{Pr} \left( \lambda_{\max}\left( \sum\limits_{i=1}^{n} \mathcal{X}_i \right)\geq \theta \right) \leq \mathbb{I}_1^M \exp \left( \frac{-3 \theta}{ 8 T } \right)~~\mbox{for $\theta \geq \sigma^2/T$}.
\end{eqnarray}
\end{theorem}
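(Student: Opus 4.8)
The plan is to mirror the classical scalar Bernstein argument, now powered by the tensor machinery already established. First I would reduce to the normalized case $T=1$ by rescaling: replace each $\mathcal{X}_i$ by $\mathcal{X}_i/T$, which turns the hypothesis $\lambda_{\max}(\mathcal{X}_i)\leq T$ into $\lambda_{\max}(\mathcal{X}_i)\leq 1$, replaces $\sigma^2$ by $\sigma^2/T^2$, and $\theta$ by $\theta/T$; the stated bounds are invariant under this substitution, so it suffices to prove them with $T=1$.

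Next I would invoke Lemma~\ref{lam:Bounded Bernstein mgf}: since $\mathbb{E}\mathcal{X}_i = 0$ and $\lambda_{\max}(\mathcal{X}_i)\leq 1$ almost surely, we have $\log\mathbb{E}e^{t\mathcal{X}_i} \preceq (e^t - t - 1)\,\mathbb{E}(\mathcal{X}_i^2)$ for $t>0$ (taking logarithms of both sides of the lemma's conclusion using the monotonicity in Eq.~\eqref{eq:log monotone}). This puts us in position to apply Corollary~\ref{cor:3_7} with $f(t) = e^t - t - 1$ and $\mathcal{A}_i = \mathbb{E}(\mathcal{X}_i^2) \succeq \mathcal{O}$. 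That corollary yields
\begin{eqnarray}
\mathrm{Pr}\left(\lambda_{\max}\left(\sum_{i=1}^n \mathcal{X}_i\right)\geq\theta\right) \leq \mathbb{I}_1^M \inf_{t>0}\exp\left(-t\theta + (e^t - t - 1)\sigma^2\right),
\end{eqnarray}
since $\lambda_{\max}(\sum_i \mathbb{E}(\mathcal{X}_i^2)) = \|\sum_i \mathbb{E}(\mathcal{X}_i^2)\| = \sigma^2$ (the sum is positive semidefinite, so the norm is the top eigenvalue).

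It then remains to optimize the scalar exponent $g(t) = -t\theta + (e^t - t - 1)\sigma^2$ over $t>0$ — this is purely the classical computation and carries no tensor content. Setting $g'(t)=0$ gives $e^t = 1 + \theta/\sigma^2$, i.e. $t = \log(1+\theta/\sigma^2)$, and substituting back produces $g(t) = -\sigma^2\left[(1+\theta/\sigma^2)\log(1+\theta/\sigma^2) - \theta/\sigma^2\right]$, which is $-\sigma^2 h(\theta/\sigma^2)$ with $h(u) = (1+u)\log(1+u) - u$. For Eq.~\eqref{eq2:thm:Bounded Tensor Bernstein} I would use the standard elementary bound $h(u) \geq \frac{u^2/2}{1+u/3}$ (valid for $u\geq 0$), which after reinserting $T$ gives the exponent $\frac{-\theta^2/2}{\sigma^2 + T\theta/3}$. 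For Eqs.~\eqref{eq3:thm:Bounded Tensor Bernstein} and~\eqref{eq4:thm:Bounded Tensor Bernstein} I would split on whether $\theta\leq\sigma^2/T$ (equivalently $u\leq 1$) or $\theta\geq\sigma^2/T$ (equivalently $u\geq 1$): on $u\leq 1$ one has $\frac{u^2/2}{1+u/3}\geq \frac{3u^2}{8}$, giving exponent $-3\theta^2/(8\sigma^2)$; on $u\geq 1$ one has $\frac{u^2/2}{1+u/3}\geq\frac{3u}{8}$, giving exponent $-3\theta/(8T)$.

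The main obstacle, such as it is, is not conceptual but bookkeeping: making sure the passage from Lemma~\ref{lam:Bounded Bernstein mgf}'s semidefinite inequality to the hypothesis of Corollary~\ref{cor:3_7} is clean — specifically that taking $\log$ of both sides of $e^{(e^t-t-1)\mathbb{E}(\mathcal{X}^2)} \succeq \mathbb{E}e^{t\mathcal{X}}$ preserves the order (by Eq.~\eqref{eq:log monotone}) so that $f(t)\mathcal{A}_i \succeq \log\mathbb{E}e^{t\mathcal{X}_i}$ holds with $f(t) = e^t-t-1 \geq 0$ for $t>0$ — and that the rescaling by $T$ is tracked consistently through $\sigma^2$ and $\theta$ in all three final inequalities. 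The elementary inequalities bounding $h(u)$ are routine and I would simply cite or verify them in one line each.
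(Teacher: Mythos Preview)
Your proposal is correct and follows essentially the same route as the paper: normalize to $T=1$, apply Lemma~\ref{lam:Bounded Bernstein mgf} to feed Corollary~\ref{cor:3_7} with $f(t)=e^t-t-1$ and $\mathcal{A}_i=\mathbb{E}(\mathcal{X}_i^2)$, optimize at $t=\log(1+\theta/\sigma^2)$, and then deduce the two simplified bounds by the same elementary denominator estimates. Your explicit use of Eq.~\eqref{eq:log monotone} to pass from the mgf bound to the cgf hypothesis of Corollary~\ref{cor:3_7}, and your mention of the scalar inequality $h(u)\geq \tfrac{u^2/2}{1+u/3}$, merely spell out steps the paper leaves implicit.
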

\begin{proof}
Without loss of generality, we can assume that $T=1$ since the summands are 1-homogeneous and the variance is 2-homogeneous. From Lemma~\ref{lam:Bounded Bernstein mgf}, we have 
\begin{eqnarray}\label{eq5:thm:Bounded Tensor Bernstein}
\mathbb{E}e^{t \mathcal{X}_i} \preceq e^{(e^t - t- 1) \mathbb{E}(\mathcal{X}_i^2)} \mbox{~~ for $t > 0$.}
\end{eqnarray}
By applying Corollary~\ref{cor:3_7}, we then have 
\begin{eqnarray}\label{eq6:thm:Bounded Tensor Bernstein}
\mathrm{Pr} \left( \lambda_{\max}\left( \sum\limits_{i=1}^{n} \mathcal{X}_i \right)\geq \theta \right) &\leq &\mathbb{I}_1^{M} \exp\left(- t \theta + (e^t - t - 1) \lambda_{\max}\left( \sum\limits_{i=1}^n \mathbb{E} \left( \mathcal{X}^2_i \right)\right)\right) \nonumber \\
&=& \mathbb{I}_1^{M} \exp \left( - t \theta + \sigma^2(e^t - t - 1)   \right).
\end{eqnarray}
The right-hand side of Eq.~\eqref{eq6:thm:Bounded Tensor Bernstein} can be minimized by setting $t = \log (1 + \theta/\sigma^2)$. Substitute such $t$ and simplify the right-hand side of Eq.~\eqref{eq6:thm:Bounded Tensor Bernstein}, we obtain  Eq.~\eqref{eq2:thm:Bounded Tensor Bernstein}.

For $\theta \leq \sigma^2/T$, we have 
\begin{eqnarray}
\frac{1}{\sigma^2 + T\theta/3} \geq \frac{1}{\sigma^2 + T (\sigma^2/T) /3} = \frac{3}{4\sigma^2}, 
\end{eqnarray}
then, we obtain  Eq.~\eqref{eq3:thm:Bounded Tensor Bernstein}. Correspondingly, for $\theta \geq \sigma^2/T$, we have 
\begin{eqnarray}
\frac{\theta}{\sigma^2 + T\theta/3} \geq \frac{\sigma^2/T}{\sigma^2 + T (\sigma^2/T)/3} = \frac{3}{4 T}, 
\end{eqnarray}
and, we obtain  Eq.~\eqref{eq4:thm:Bounded Tensor Bernstein} also. 
\end{proof}

The following theorem~\ref{thm:Subexponential Tensor Bernstein} is the extension of the theorem~\ref{thm:Bounded Tensor Bernstein} by allowing the moments of the random tensors to grow at a controlled rate. We have to prepare subexponential Bernstein moment-generating function Lemma first for later proof of Theorem~\ref{thm:Subexponential Tensor Bernstein}

\begin{lemma}\label{lem:Subexponential Bernstein mgf}
Suppose that $\mathcal{X}$ is a random Hermitian tensor that satisfies
\begin{eqnarray}\label{eq1:lem:Subexponential Bernstein mgf}
\mathbb{E} \mathcal{X} = 0 \mbox{~~and~~} \mathbb{E}(\mathcal{X}^p)  \preceq \frac{p! \mathcal{A}^2}{2} 
\mbox{for $p=2,3,4,\cdots$.} 
\end{eqnarray}
Then, we have
\begin{eqnarray}
 \exp\left(\frac{t^2 \mathcal{A}^2}{2(1 - t)}\right)   \succeq  \mathbb{E}e^{t \mathcal{X}},
\end{eqnarray}
where $0 < t < 1$.
\end{lemma}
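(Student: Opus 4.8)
The plan is to lift the classical scalar argument for the sub-exponential moment-generating function bound to tensors, relying only on the facts that the positive-semidefinite order is preserved under multiplication by positive scalars, under sums, and under limits. First I would expand the tensor moment-generating function using the power series in Eq.~\eqref{eq:mgf and cgf expans} and use the zero-mean hypothesis $\mathbb{E}\mathcal{X} = 0$ to discard the first-order term:
\begin{eqnarray}
\mathbb{E} e^{t \mathcal{X}} &=& \mathcal{I} + t\, \mathbb{E} \mathcal{X} + \sum\limits_{p=2}^{\infty} \frac{t^p}{p!}\, \mathbb{E}(\mathcal{X}^p) = \mathcal{I} + \sum\limits_{p=2}^{\infty} \frac{t^p}{p!}\, \mathbb{E}(\mathcal{X}^p).
\end{eqnarray}
The paper's standing regularity assumption justifies both the expansion and the interchange of expectation with the infinite sum.

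Next I would dominate the series term by term. Since $0 < t < 1$ forces $t^p/p! > 0$, the moment hypothesis $\mathbb{E}(\mathcal{X}^p) \preceq \frac{p!}{2}\mathcal{A}^2$ gives $\frac{t^p}{p!}\mathbb{E}(\mathcal{X}^p) \preceq \frac{t^p}{2}\mathcal{A}^2$ for every $p \geq 2$, and adding these relations (the PSD cone being closed under convergent sums) yields
\begin{eqnarray}
\mathbb{E} e^{t \mathcal{X}} &\preceq& \mathcal{I} + \frac{\mathcal{A}^2}{2} \sum\limits_{p=2}^{\infty} t^p = \mathcal{I} + \frac{t^2}{2(1-t)}\, \mathcal{A}^2,
\end{eqnarray}
where the geometric series converges precisely because $0 < t < 1$.

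Finally I would close the argument with the elementary inequality $1 + x \leq e^x$, promoted to tensors through the functional monotonicity relation Eq.~\eqref{eq:tensor psd ordering}: the tensor $\mathcal{B} \define \frac{t^2}{2(1-t)}\mathcal{A}^2$ satisfies $\mathcal{B} \succeq \mathcal{O}$ since $\mathcal{A}^2 \succeq \mathcal{O}$ and $\frac{t^2}{2(1-t)} > 0$, so its spectrum lies in $[0,\infty)$ and $\mathcal{I} + \mathcal{B} \preceq e^{\mathcal{B}}$. Combining this with the previous display gives
\begin{eqnarray}
\mathbb{E} e^{t \mathcal{X}} &\preceq& \mathcal{I} + \frac{t^2}{2(1-t)}\, \mathcal{A}^2 \preceq \exp\left( \frac{t^2 \mathcal{A}^2}{2(1-t)} \right),
\end{eqnarray}
which is exactly the asserted bound.

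The part requiring the most care is not a computation but the passage to the limit in the infinite series: one must verify that ``$\mathcal{S}_N \preceq \mathcal{T}_N$ for every partial sum'' implies ``$\lim_N \mathcal{S}_N \preceq \lim_N \mathcal{T}_N$'', which is immediate from closedness of the PSD cone, and that both $\mathbb{E} e^{t \mathcal{X}}$ and $\sum_{p \geq 2} \frac{t^p}{2}\mathcal{A}^2$ genuinely converge for $0 < t < 1$. Because the hypothesis supplies only an upper semidefinite bound on the moments, convergence of the left-hand series does not follow from it alone; this is covered by the paper's blanket regularity assumption, and I would state that dependence explicitly so the step is airtight.
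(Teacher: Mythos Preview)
Your proof is correct and follows essentially the same route as the paper: expand $\mathbb{E}e^{t\mathcal{X}}$ as a power series, drop the linear term via $\mathbb{E}\mathcal{X}=0$, bound each moment by $\frac{p!}{2}\mathcal{A}^2$, sum the resulting geometric series, and finish with $\mathcal{I}+\mathcal{B}\preceq e^{\mathcal{B}}$. You are simply more explicit than the paper about the convergence and cone-closedness justifications.
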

\begin{proof}
From Tayler series of the tensor exponential expansion, we have 
\begin{eqnarray}
\mathbb{E}e^{t \mathcal{X}} &=& \mathcal{I} + t \mathbb{E}\mathcal{X} + \sum\limits_{p=2}^{\infty} \frac{t^p \mathbb{E}(\mathcal{X}^p)}{p!} \preceq \mathcal{I}+ \sum\limits_{p=2}^{\infty} \frac{t^p \mathcal{A}^2}{2} \nonumber \\
&=& \mathcal{I} + \frac{t^2 \mathcal{A}^2}{2 (1-t)} \preceq \exp\left(\frac{t^2 \mathcal{A}^2}{2(1 - t)} \right),
\end{eqnarray}
therefore, this Lemma is proved. 
\end{proof}

\begin{theorem}[Subexponential Tensor Bernstein Bounds]\label{thm:Subexponential Tensor Bernstein}
Given a finite sequence of independent Hermitian tensors $\{ \mathcal{X}_i  \in \mathbb{C}^{I_1 \times \cdots \times I_M  \times I_1 \times \cdots \times I_M } \}$ that satisfy
\begin{eqnarray}\label{eq1:thm:Subexponential Tensor Bernstein}
\mathbb{E} \mathcal{X}_i = 0 \mbox{~~and~~} \mathbb{E} (\mathcal{X}^p_i) \preceq \frac{p! T^{p-2} }{2} \mathcal{A}_i^2,
\end{eqnarray}
where $p = 2,3,4,\cdots$. 

Define the total varaince $\sigma^2$ as: $\sigma^2 \define \left\Vert \sum\limits_i^n \mathcal{A}_i^2 \right\Vert$.
Then, we have following inequalities:
\begin{eqnarray}\label{eq2:thm:Subexponential Tensor Bernstein}
\mathrm{Pr} \left( \lambda_{\max}\left( \sum\limits_{i=1}^{n} \mathcal{X}_i \right)\geq \theta \right) \leq \mathbb{I}_1^M \exp \left( \frac{-\theta^2/2}{\sigma^2 + T\theta}\right);
\end{eqnarray}
and
\begin{eqnarray}\label{eq3:thm:Subexponential Tensor Bernstein}
\mathrm{Pr} \left( \lambda_{\max}\left( \sum\limits_{i=1}^{n} \mathcal{X}_i \right)\geq \theta \right) \leq \mathbb{I}_1^M \exp \left( \frac{-\theta^2}{ 4 \sigma^2}\right)~~\mbox{for $\theta \leq \sigma^2/T$};
\end{eqnarray}
and
\begin{eqnarray}\label{eq4:thm:Subexponential Tensor Bernstein}
\mathrm{Pr} \left( \lambda_{\max}\left( \sum\limits_{i=1}^{n} \mathcal{X}_i \right)\geq \theta \right) \leq \mathbb{I}_1^M \exp \left( \frac{- \theta}{ 4 T } \right)~~\mbox{for $\theta \geq \sigma^2/T$}.
\end{eqnarray}
\end{theorem}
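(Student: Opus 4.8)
The plan is to mirror the argument used for Theorem~\ref{thm:Bounded Tensor Bernstein}, replacing the bounded moment-generating-function estimate by the subexponential one from Lemma~\ref{lem:Subexponential Bernstein mgf}. First I would reduce to the case $T=1$: the hypotheses $\mathbb{E}\mathcal{X}_i = 0$ and $\mathbb{E}(\mathcal{X}_i^p) \preceq \frac{p!\, T^{p-2}}{2}\mathcal{A}_i^2$ are homogeneous under the rescaling $\mathcal{X}_i \mapsto \mathcal{X}_i/T$, $\mathcal{A}_i \mapsto \mathcal{A}_i/T$, $\theta \mapsto \theta/T$, $\sigma^2 \mapsto \sigma^2/T^2$, so it suffices to establish the three inequalities with $T=1$ and then undo the scaling at the end.

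With $T=1$ the (rescaled) summands satisfy exactly the hypothesis $\mathbb{E}(\mathcal{X}_i^p) \preceq \frac{p!}{2}\mathcal{A}_i^2$ of Lemma~\ref{lem:Subexponential Bernstein mgf}, which yields
\begin{eqnarray}
\mathbb{E}e^{t\mathcal{X}_i} \preceq \exp\!\left( \frac{t^2 \mathcal{A}_i^2}{2(1-t)} \right), \quad 0 < t < 1.
\end{eqnarray}
This is precisely a bound of the form $f(t)\mathcal{A}_i^2 \succeq \log \mathbb{E}e^{t\mathcal{X}_i}$ as required by Corollary~\ref{cor:3_7}, with the nonnegative function $f(t) = \frac{t^2}{2(1-t)}$ on $(0,1)$ (extended by $+\infty$ elsewhere) and with the fixed Hermitian tensors $\mathcal{A}_i^2$ playing the role of the $\mathcal{A}_i$ there; note $\lambda_{\max}\!\left(\sum_i \mathcal{A}_i^2\right) = \sigma^2$. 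Applying that corollary gives
\begin{eqnarray}
\mathrm{Pr}\!\left( \lambda_{\max}\!\left( \sum_{i=1}^n \mathcal{X}_i \right) \geq \theta \right) \leq \mathbb{I}_1^M \inf_{0<t<1} \exp\!\left( -t\theta + \frac{\sigma^2 t^2}{2(1-t)} \right).
\end{eqnarray}

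The one genuinely computational step — and the place where a little care is needed — is the choice of $t$ that controls the exponent. I would substitute $t = \theta/(\sigma^2 + \theta)$, which lies in $(0,1)$ for every $\theta > 0$ so that the lemma applies; a short computation gives $1-t = \sigma^2/(\sigma^2+\theta)$, hence $\frac{\sigma^2 t^2}{2(1-t)} = \frac{\theta^2}{2(\sigma^2+\theta)}$, so the exponent collapses to $-\frac{\theta^2}{2(\sigma^2+\theta)}$. Restoring the scaling factor $T$ turns $\sigma^2+\theta$ into $\sigma^2 + T\theta$ and reproduces Eq.~\eqref{eq2:thm:Subexponential Tensor Bernstein}. (One may instead check that this $t$ is the exact critical point of the exponent, but for the stated inequality it is enough to substitute it.)

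Finally, the two regime-specific inequalities follow from Eq.~\eqref{eq2:thm:Subexponential Tensor Bernstein} by crude bounds on the denominator: if $\theta \leq \sigma^2/T$ then $\sigma^2 + T\theta \leq 2\sigma^2$, so $\frac{-\theta^2/2}{\sigma^2 + T\theta} \leq \frac{-\theta^2}{4\sigma^2}$, giving Eq.~\eqref{eq3:thm:Subexponential Tensor Bernstein}; if $\theta \geq \sigma^2/T$ then $\sigma^2 + T\theta \leq 2T\theta$, so $\frac{-\theta^2/2}{\sigma^2+T\theta} \leq \frac{-\theta}{4T}$, giving Eq.~\eqref{eq4:thm:Subexponential Tensor Bernstein}. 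I expect no real obstacle in this argument; the only things to be careful about are keeping the homogeneity bookkeeping consistent and confirming that the chosen $t$ stays in the admissible interval $(0,1)$, which is exactly where Lemma~\ref{lem:Subexponential Bernstein mgf} is valid.
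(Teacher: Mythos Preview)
Your proposal is correct and follows essentially the same route as the paper: reduce to $T=1$, apply Lemma~\ref{lem:Subexponential Bernstein mgf} to feed Corollary~\ref{cor:3_7} with $f(t)=t^2/(2(1-t))$, plug in $t=\theta/(\sigma^2+\theta)$ to obtain Eq.~\eqref{eq2:thm:Subexponential Tensor Bernstein}, and then split into the two regimes by bounding the denominator. The computations and the choice of $t$ match the paper's proof exactly.
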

\begin{proof}
Without loss of generality, we can assume that $T=1$. From Lemma~\ref{lem:Subexponential Bernstein mgf}, we have 
\begin{eqnarray}
\mathbb{E}e^{t \mathcal{X}_i} \preceq e^{\frac{t^2 \mathcal{A}^2_i}{2(1-t)}}, 
\end{eqnarray}
where $0 < t < 1$.

By applying Corollary~\ref{cor:3_7}, we then have 
\begin{eqnarray}\label{eq6:thm:Subexponential Tensor Bernstein}
\mathrm{Pr} \left( \lambda_{\max}\left( \sum\limits_{i=1}^{n} \mathcal{X}_i \right)\geq \theta \right) &\leq &\mathbb{I}_1^{M} \exp\left(- t \theta + \frac{t^2}{2(1-t)} \lambda_{\max}\left( \sum\limits_{i=1}^n \mathcal{A}_i^2  \right)\right) \nonumber \\
&=& \mathbb{I}_1^{M} \exp \left( - t \theta + \frac{\sigma^2 t^2}{2(1-t)}  \right).
\end{eqnarray}
The right-hand side of Eq.~\eqref{eq6:thm:Subexponential Tensor Bernstein} can be minimized by setting $t = \frac{\theta}{\theta + \sigma^2}$. Substitute such $t$ and simplify the right-hand side of Eq.~\eqref{eq6:thm:Subexponential Tensor Bernstein}, we obtain  Eq.~\eqref{eq2:thm:Subexponential Tensor Bernstein}.

For $\theta \leq \sigma^2/T$, we have 
\begin{eqnarray}
\frac{1}{\sigma^2 + T\theta} \geq \frac{1}{\sigma^2 + T (\sigma^2/T )} = \frac{1}{2\sigma^2}, 
\end{eqnarray}
then, we obtain  Eq.~\eqref{eq3:thm:Subexponential Tensor Bernstein}. Similarly, for $\theta \geq \sigma^2/T$, we have 
\begin{eqnarray}
\frac{\theta}{\sigma^2 + T\theta} \geq \frac{\sigma^2/T}{\sigma^2 + T( \sigma^2/T) } = \frac{1}{2T}, 
\end{eqnarray},
therefore, we also obtain  Eq.~\eqref{eq4:thm:Subexponential Tensor Bernstein}. 
\end{proof}

\subsection{Application of Tensor Bernstein Bounds}\label{sec:Application of Tensor Bernstein Bounds}

The tensor Bernstein bounds can also be extended to rectangular tensors by dilation. Consider a sequence of tensors $\{\mathcal{Y}_i\} \in \mathbb{C}^{I_1 \times \cdots \times I_M \times J_1 \times \cdots \times J_M}$ satisfy following:
\begin{eqnarray}
\mathbb{E} \mathcal{Y}_i = \mathcal{O} \mbox{~~and~~} \left\Vert \mathcal{Y}_i \right\Vert \leq T \mbox{~~almost surely.}
\end{eqnarray}
If the variance $\sigma^2$ is expressed as:
\begin{eqnarray}
\sigma^2 \define  \max \Bigg\{ \left\Vert \sum\limits_{i=1}^n \mathcal{Y}_i \star_M \mathcal{Y}^H_i \right\Vert,  \left\Vert \sum\limits_{i=1}^n \mathcal{Y}^H_i \star_M \mathcal{Y}_i \right\Vert\Bigg\},
\end{eqnarray}
we have 
\begin{eqnarray}
\mathrm{Pr} \left( \left\Vert \sum\limits_{i=1}^{n} \mathcal{Y}_i \right\Vert \geq \theta \right) \leq \prod_{m=1}^M (I_m + J_m) \exp \left( \frac{-\theta^2/2}{\sigma^2 + T\theta/3}\right);
\end{eqnarray}
from Theorem~\ref{thm:Bounded Tensor Bernstein}. 

Another application of tensor Bernstein bounds is to get upper and lower Bounds for the maximum eigenvalue with subexponential tensors. This application can relax the corollary~\ref{cor:Bounds for the Maximum Eigenvalue} conditions by allowing the moments of the random tensors to grow at a controlled rate. 
\begin{corollary}[Upper and Lower Bounds for the Maximum Eigenvalue for Subexponential Tensors]\label{cor:Bounds for the Maximum Eigenvalue Subexponential Tensors}
Consider a sequence $\{ \mathcal{X}_i  \in \mathbb{C}^{I_1 \times \cdots \times I_M  \times I_1 \times \cdots \times I_M } \}$ of independent, random, Hermitian tensors that satisfy
\begin{eqnarray}
\mathcal{X}_i \succeq \mathcal{O} \mbox{~~and~~} \mathbb{E} (\mathcal{X}^p_i) \preceq \frac{p! T^{p-2} }{2} \mathcal{A}_i^2,
\end{eqnarray}
and $\sigma^2 \define \left\Vert \sum\limits_{i=1}^n \mathcal{A}^2_i \right\Vert$. Then, we have 
\begin{eqnarray}\label{eq1:cor:Bounds for the Maximum Eigenvalue Subexponential Tensors}
\mu_{\max} \leq \mathbb{E} \lambda_{\max}\left( \sum\limits_{i=1}^n \mathcal{X}_i \right) \leq
2\mathbb{I}_1^M\left(\sigma \mathfrak{G}\left(\frac{\sigma}{2T} \right) + 2T e^{-\frac{\sigma^2}{4T^2}}\right),
\end{eqnarray}
where $\mathfrak{G}(\frac{\sigma}{2T}) \define \int_0^{\frac{\sigma}{2T}} e^{-s^2} ds$.
\end{corollary}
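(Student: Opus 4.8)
The plan is to treat the two inequalities separately. The lower bound $\mu_{\max} \leq \mathbb{E}\lambda_{\max}(\sum_{i=1}^n \mathcal{X}_i)$ follows exactly as in the proof of Corollary~\ref{cor:Bounds for the Maximum Eigenvalue}: the map $\mathcal{A} \mapsto \lambda_{\max}(\mathcal{A})$ is convex, so Jensen's inequality gives $\mathbb{E}\lambda_{\max}(\sum_{i=1}^n \mathcal{X}_i) \geq \lambda_{\max}(\sum_{i=1}^n \mathbb{E}\mathcal{X}_i) = \mu_{\max}$.

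For the upper bound I would start from the layer-cake representation
\[
\mathbb{E}\lambda_{\max}\Bigl(\sum_{i=1}^n \mathcal{X}_i\Bigr) = \int_0^\infty \mathrm{Pr}\Bigl(\lambda_{\max}\bigl(\textstyle\sum_{i=1}^n \mathcal{X}_i\bigr) \geq t\Bigr)\, dt,
\]
and then apply the two regime-dependent estimates from Theorem~\ref{thm:Subexponential Tensor Bernstein}. On the range $0 \leq t \leq \sigma^2/T$, Eq.~\eqref{eq3:thm:Subexponential Tensor Bernstein} gives $\mathrm{Pr}(\cdots \geq t) \leq \mathbb{I}_1^M e^{-t^2/(4\sigma^2)}$, while on $t \geq \sigma^2/T$, Eq.~\eqref{eq4:thm:Subexponential Tensor Bernstein} gives $\mathrm{Pr}(\cdots \geq t) \leq \mathbb{I}_1^M e^{-t/(4T)}$. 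Splitting the integral at $t = \sigma^2/T$ and inserting these bounds yields
\[
\mathbb{E}\lambda_{\max}\Bigl(\sum_{i=1}^n \mathcal{X}_i\Bigr) \leq \mathbb{I}_1^M \int_0^{\sigma^2/T} e^{-t^2/(4\sigma^2)}\, dt + \mathbb{I}_1^M \int_{\sigma^2/T}^\infty e^{-t/(4T)}\, dt .
\]

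The two integrals are elementary. In the first, the substitution $s = t/(2\sigma)$ (so $dt = 2\sigma\, ds$ and the upper limit becomes $\sigma/(2T)$) turns it into $2\sigma \int_0^{\sigma/(2T)} e^{-s^2}\, ds = 2\sigma\, \mathfrak{G}(\sigma/(2T))$; the second integrates directly to $4T e^{-\sigma^2/(4T^2)}$. Adding the two contributions and extracting a common factor $2$ reproduces the claimed bound $2\mathbb{I}_1^M(\sigma\, \mathfrak{G}(\sigma/(2T)) + 2T e^{-\sigma^2/(4T^2)})$.

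The calculation itself is routine; the step that needs care is reconciling the hypotheses. Theorem~\ref{thm:Subexponential Tensor Bernstein} is stated for centered summands ($\mathbb{E}\mathcal{X}_i = 0$), whereas this corollary assumes $\mathcal{X}_i \succeq \mathcal{O}$. I would resolve this by noting that the centering enters only through Lemma~\ref{lem:Subexponential Bernstein mgf}, which is used to derive the moment-generating-function bound behind Eq.~\eqref{eq6:thm:Subexponential Tensor Bernstein}; so one should either read the corollary as carrying the implicit centering assumption, or apply the theorem to $\mathcal{X}_i - \mathbb{E}\mathcal{X}_i$ and track the induced shift of $\lambda_{\max}$. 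Keeping the normalization $T$ explicit through the change of variables (rather than reducing to $T=1$ as inside the theorem's proof) is the other point to watch. I expect this hypothesis-bookkeeping, not the integration, to be the main obstacle.
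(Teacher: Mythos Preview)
Your proposal is correct and matches the paper's proof essentially line for line: the lower bound via Jensen's inequality and convexity of $\lambda_{\max}$, the layer-cake representation split at $t=\sigma^2/T$, the application of Eqs.~\eqref{eq3:thm:Subexponential Tensor Bernstein} and~\eqref{eq4:thm:Subexponential Tensor Bernstein}, and the same substitution $s=t/(2\sigma)$ for the Gaussian integral. Your observation about the hypothesis mismatch (centered versus positive-semidefinite summands) is a genuine issue that the paper's proof does not address; it simply invokes the theorem without reconciling the assumptions.
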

\begin{proof}
The lower bound at Eq.~\eqref{eq1:cor:Bounds for the Maximum Eigenvalue Subexponential Tensors} is true from the convexity of the function $\mathcal{A} \rightarrow \lambda_{\max}(\mathcal{A})$ and the Jensen's inequality. 

For the upper bound, we have 
\begin{eqnarray}\label{eq2:cor:Bounds for the Maximum Eigenvalue Subexponential Tensors}
\mathbb{E} \lambda_{\max}\left( \sum\limits_{i=1}^n \mathcal{X}_i \right)
&=& \int_{0}^{\infty} \mathrm{Pr} \left( \lambda_{\max}\left( \sum\limits_{i=1}^n \mathcal{X}_i \right)  \geq t \right) d t  \nonumber \\
&\leq_1& \mathbb{I}_1^{M}  \int_{0}^{\frac{\sigma^2}{T}}\exp \left(- \frac{t^2}{4 \sigma^2} \right)   dt +  \mathbb{I}_1^{M}  \int_{\frac{\sigma^2}{T}}^{\infty}\exp \left(- \frac{t}{4 T} \right)   dt \nonumber \\
&=&  2\mathbb{I}_1^M\left(\sigma \mathfrak{G}\left(\frac{\sigma}{2T} \right) + 2T e^{-\frac{\sigma^2}{4T^2}}\right),
\end{eqnarray}
where the inequality $\leq_1$ comes from the Eqs.~\eqref{eq3:thm:Subexponential Tensor Bernstein} and~\eqref{eq4:thm:Subexponential Tensor Bernstein}. This corollary is proved by introducing Gaussian integral function $\mathfrak{G}(x) \define \int_0^{x} e^{-s^2} ds$.  
\end{proof}

\section{Martingale Deviation Bounds}\label{sec:Martingale Deviation Bounds}

In this section, we introduce concepts about tensor martingales in Section~\ref{sec:Tensor Martingales}, and extend Hoeffding, Azuma, and McDiarmid inequalities to tensors context in Section~\ref{sec:Tensor Martingale Deviation Bounds}. 

\subsection{Tensor Martingales}\label{sec:Tensor Martingales}

Necessary definitions about tensor martingales will be provided here for later tensor martingale deviation bounds derivations. Let $(\Omega, \mathfrak{F}, \mathbb{P})$ be a master probability space. Consider a filtration $\{ \mathfrak{F}_i \}$ contained in the master sigma algebra as:
\begin{eqnarray}
\mathfrak{F}_0 \subset \mathfrak{F}_1 \subset \mathfrak{F}_2 \subset \cdots 
\subset \mathfrak{F}_{\infty} \subset \mathfrak{F}.
\end{eqnarray}
Given such a filtration, we define the conditional expectation $\mathbb{E}_i[ \cdot ] \define \mathbb{E}_i[ \cdot | \mathfrak{F}_i]$. A sequence $\{ \mathcal{Y}_i \}$ of random tensors is called \emph{adapted} to the filtration when each tensor $\mathcal{Y}_i$ is measurable with respect to $\mathfrak{F}_i$. We can think that an adapted sequence is one where the present depends only on the past. 

An adapted sequence $\{ \mathcal{X}_i \}$ of Hermitian tensors is named as a \emph{tensor martingale} when 
\begin{eqnarray}
\mathbb{E}_{i-1} \mathcal{X}_i = \mathcal{X}_{i-1} \mbox{~~~and~~~} \mathbb{E} \left\Vert \mathcal{X}_i \right\Vert < \infty,
\end{eqnarray}
where $i = 1, 2, 3, \cdots$. We obtain a scalar martingale if we track any fixed entry of a tensor martingale $\{ \mathcal{X}_i \}$. Given a tensor martingale $\{ \mathcal{X}_i \}$, we can construct the following new sequence of tensors 
\begin{eqnarray}
\mathcal{Y}_i \define \mathcal{X}_i  - \mathcal{X}_{i-1}  \mbox{~~for $i=1, 2, 3,\cdots$}
\end{eqnarray}
We then have $\mathbb{E}_{i-1}\mathcal{Y}_i = \mathcal{O}$.

\subsection{Tensor Martingale Deviation Bounds}\label{sec:Tensor Martingale Deviation Bounds}

Two Lemmas should be presented first before presenting tensor martingale deviation bounds and their proofs.

\begin{lemma}[Tensor Symmetrization]\label{lma:tensor symmetriation}
Let $\mathcal{A}$ be a fixed Hermitian tensor, and let $\mathcal{X}$ be a random Hermitian tensor with $\mathbb{E}\mathcal{E} = \mathcal{O}$. Then
\begin{eqnarray}
\mathbb{E} \mathrm{Tr}e^{\mathcal{A} + \mathcal{X}} \leq \mathbb{E} \mathrm{Tr}e^{\mathcal{A} + 2\beta \mathcal{X}},
\end{eqnarray}
where $\beta$ is Rademacher random variable.
\end{lemma}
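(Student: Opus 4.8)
The plan is to mimic the classical scalar (and matrix) symmetrization argument, introducing an independent copy of $\mathcal{X}$ and using Jensen's inequality together with the convexity of the trace exponential. First I would let $\mathcal{X}'$ be an independent copy of the random Hermitian tensor $\mathcal{X}$ (independent also of the Rademacher variable $\beta$), so that in particular $\mathbb{E}\mathcal{X}' = \mathcal{O}$ and $\mathcal{X} - \mathcal{X}'$ is a symmetric random tensor, i.e.\ $\mathcal{X} - \mathcal{X}'$ has the same distribution as $\beta(\mathcal{X} - \mathcal{X}')$ and as $-(\mathcal{X}-\mathcal{X}')$. Writing $\mathcal{X} = \mathcal{X} - \mathbb{E}'\mathcal{X}' = \mathbb{E}'(\mathcal{X} - \mathcal{X}')$, where $\mathbb{E}'$ denotes expectation over $\mathcal{X}'$ only, I would then invoke the convexity of the map $\mathcal{Z} \mapsto \mathrm{Tr}\, e^{\mathcal{A} + \mathcal{Z}}$ (recorded in Section~\ref{sec:Tensor Functions} as the convexity of the trace exponential function, composed with the affine shift by $\mathcal{A}$) to pull the inner expectation $\mathbb{E}'$ outside the trace exponential via Jensen:
\begin{eqnarray}
\mathbb{E}\,\mathrm{Tr}\, e^{\mathcal{A} + \mathcal{X}} = \mathbb{E}\,\mathrm{Tr}\, e^{\mathcal{A} + \mathbb{E}'(\mathcal{X} - \mathcal{X}')} \leq \mathbb{E}\,\mathbb{E}'\,\mathrm{Tr}\, e^{\mathcal{A} + \mathcal{X} - \mathcal{X}'}.
\end{eqnarray}

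Next I would exploit the symmetry of $\mathcal{X} - \mathcal{X}'$: since this tensor has the same law as $\beta(\mathcal{X} - \mathcal{X}')$ with $\beta$ an independent Rademacher sign, the right-hand side equals $\mathbb{E}\,\mathbb{E}'\,\mathbb{E}_\beta\,\mathrm{Tr}\, e^{\mathcal{A} + \beta(\mathcal{X} - \mathcal{X}')}$. Conditioning on $\beta$ and applying convexity of the trace exponential once more to the splitting $\beta(\mathcal{X} - \mathcal{X}') = \tfrac12(2\beta\mathcal{X}) + \tfrac12(-2\beta\mathcal{X}')$, Jensen gives
\begin{eqnarray}
\mathrm{Tr}\, e^{\mathcal{A} + \beta(\mathcal{X} - \mathcal{X}')} \leq \tfrac12\,\mathrm{Tr}\, e^{\mathcal{A} + 2\beta\mathcal{X}} + \tfrac12\,\mathrm{Tr}\, e^{\mathcal{A} - 2\beta\mathcal{X}'}.
\end{eqnarray}
Taking expectations over everything and using that $\beta$ and $-\beta$ have the same distribution (so the $\mathcal{X}'$ term is handled by relabeling), both halves collapse to $\mathbb{E}\,\mathrm{Tr}\, e^{\mathcal{A} + 2\beta\mathcal{X}}$, which is the claimed bound.

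The step I expect to be the main obstacle is justifying that the map $\mathcal{Z} \mapsto \mathrm{Tr}\, e^{\mathcal{A} + \mathcal{Z}}$ is genuinely convex on Hermitian tensors (not merely along the diagonal or under commuting assumptions), so that Jensen's inequality applies to the tensor-valued random variable $\mathcal{X} - \mathcal{X}'$ inside the trace exponential. For matrices this is the classical convexity of $\mathbf{Z} \mapsto \mathrm{Tr}\, e^{\mathbf{H} + \mathbf{Z}}$, and the paper has asserted the tensor analogue (convexity of $\mathcal{X} \mapsto \mathrm{Tr}\exp(\mathcal{X})$) in Section~\ref{sec:Tensor Functions}; I would simply cite that fact, noting that precomposition with the affine shift $\mathcal{Z} \mapsto \mathcal{A} + \mathcal{Z}$ preserves convexity. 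A secondary point to be careful about is the measure-theoretic bookkeeping of the independent copy and the Rademacher variable — ensuring $\mathcal{X}$, $\mathcal{X}'$, $\beta$ live on a common product space — but this is the standard symmetrization device and raises no real difficulty under the blanket regularity assumption stated at the end of Section~\ref{sec:Tensor Moments and Cumulants}.
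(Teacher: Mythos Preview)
Your argument is correct, and its first half --- introducing an independent copy, applying Jensen via convexity of the trace exponential, and replacing $\mathcal{X}-\mathcal{X}'$ by $\beta(\mathcal{X}-\mathcal{X}')$ --- is exactly what the paper does.

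The divergence is in how the bound $\mathbb{E}\,\mathrm{Tr}\,e^{\mathcal{A}+\beta(\mathcal{X}-\mathcal{X}')}\le \mathbb{E}\,\mathrm{Tr}\,e^{\mathcal{A}+2\beta\mathcal{X}}$ is obtained. The paper invokes the tensor Golden--Thompson inequality to split $e^{\mathcal{A}+\beta(\mathcal{X}-\mathcal{X}')}$ into a product $e^{\mathcal{A}/2+\beta\mathcal{X}}e^{\mathcal{A}/2-\beta\mathcal{X}'}$, then applies Cauchy--Schwarz (for the trace and for the expectation) and uses that the two resulting factors are identically distributed. You instead write $\mathcal{A}+\beta(\mathcal{X}-\mathcal{X}')=\tfrac12(\mathcal{A}+2\beta\mathcal{X})+\tfrac12(\mathcal{A}-2\beta\mathcal{X}')$ and appeal a second time to the convexity of $\mathcal{Z}\mapsto\mathrm{Tr}\,e^{\mathcal{Z}}$, followed by the distributional identity $-2\beta\mathcal{X}'\stackrel{d}{=}2\beta\mathcal{X}$. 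Your route is more elementary: it reuses only the convexity fact already needed in the first step and avoids the Golden--Thompson machinery entirely, at no cost in sharpness. The paper's route, while heavier, is the one traditionally used in the matrix literature following Tropp, and it makes the role of the trace product structure explicit.
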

\begin{proof}
Build an independent copy random tensor $\mathcal{Y}$ from $\mathcal{X}$, and let $\mathbb{E}_{\mathcal{Y}}$ denote the expectation with respect to the new random tensor $\mathcal{Y}$. Then, we have
\begin{eqnarray}
\mathbb{E} \mathrm{Tr}e^{\mathcal{A} + \mathcal{X}} =
\mathbb{E} \mathrm{Tr}e^{\mathcal{A} + \mathcal{X} - \mathbb{E}_{\mathcal{Y}} \mathcal{Y}}  \leq \mathbb{E} \mathrm{Tr}e^{\mathcal{A} + \mathcal{X} - \mathcal{Y}}
= \mathbb{E} \mathrm{Tr}e^{\mathcal{A} + \beta( \mathcal{X} - \mathcal{Y})},
\end{eqnarray}
where the first equality uses $\mathbb{E}_{\mathcal{Y}} \mathcal{Y} = \mathcal{O}$; the inequality uses the convexity of the trace exponential with Jensen's inequality; finally, the last equality comes from that the random tensor $\mathcal{X} - \mathcal{Y}$ is a symmetric random tensor and Rademacher is also a symmetric random variable.

This Lemma is established by the following:
\begin{eqnarray}
\mathbb{E} \mathrm{Tr}e^{\mathcal{A} + \mathcal{X}} &\leq& \mathbb{E} \mathrm{Tr} \left( e^{\mathcal{A}/2 + \beta \mathcal{X}}  e^{\mathcal{A}/2 - \beta \mathcal{Y}}\right) \nonumber \\
& \leq & \left( \mathbb{E} \mathrm{Tr} e^{\mathcal{A} + 2 \beta \mathcal{X}}\right)^{1/2}  \left( \mathbb{E} \mathrm{Tr} e^{\mathcal{A} - 2 \beta \mathcal{Y}}\right)^{1/2}
= \mathbb{E} \mathrm{Tr}e^{\mathcal{A} + 2 \beta \mathcal{X}},
\end{eqnarray}
where the first inequality comes from tensor Golden-Thompson inequality~\cite{chang2020tensor}, the second inequality comes from the Cauchy-Schwarz inequality, and the last identity follows from that the two factors are identically distributed. 
\end{proof} 

The other Lemma is to provide the tensor cumulant-generating function of a symetrized random tensor. 
\begin{lemma}[Cumulant-Generating Function of Symetrized Random tensor]\label{lma:Azuma CGF}
Given that $\mathcal{X}$ is a random Hermitian tensor and $\mathcal{A}$ is a fixed Hermitian tensor that satisfies $\mathcal{X}^2 \preceq \mathcal{A}^2$. Then, we have
\begin{eqnarray}
\log \mathbb{E} \left [ e^{2 \beta t  \mathcal{X}} | \mathcal{X} \right] \preceq 2 t^2 \mathcal{A}^2, 
\end{eqnarray}
where $\beta$ is a Rademacher random variable.
\end{lemma}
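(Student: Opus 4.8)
The plan is to reduce the tensor statement to a scalar inequality on the eigenvalues of $\mathcal{X}$ (equivalently of $\mathcal{A}$), exactly as in the matrix case. First I would condition on $\mathcal{X}$, so that the only randomness left in $\mathbb{E}[e^{2\beta t \mathcal{X}}\mid \mathcal{X}]$ is the Rademacher variable $\beta$. Since $\beta$ takes the values $\pm 1$ with equal probability and $\mathcal{X}$ is fixed under the conditioning, the conditional expectation is $\tfrac{1}{2}(e^{2t\mathcal{X}} + e^{-2t\mathcal{X}}) = \cosh(2t\mathcal{X})$, where the hyperbolic cosine of a Hermitian tensor is defined via the functional calculus of Section~\ref{sec:Tensor Functions}. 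So the claim becomes
\begin{eqnarray}
\log \cosh(2t\mathcal{X}) \preceq 2t^2 \mathcal{A}^2 .
\end{eqnarray}

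Next I would establish the scalar inequality $\cosh(s) \leq e^{s^2/2}$ for all real $s$, which follows by comparing Taylor coefficients: $\cosh(s) = \sum_{k\ge 0} s^{2k}/(2k)!$ while $e^{s^2/2} = \sum_{k\ge 0} s^{2k}/(2^k k!)$, and $(2k)! \geq 2^k k!$ for every $k \geq 0$. Applying this with $s$ ranging over the eigenvalues of $2t\mathcal{X}$ and invoking the transfer principle~\eqref{eq:tensor psd ordering} gives $\cosh(2t\mathcal{X}) \preceq e^{2t^2 \mathcal{X}^2}$. Taking tensor logarithms and using the monotonicity of $\log$ with respect to the semidefinite order~\eqref{eq:log monotone} yields $\log \cosh(2t\mathcal{X}) \preceq \log e^{2t^2 \mathcal{X}^2} = 2t^2 \mathcal{X}^2$. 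Finally, the hypothesis $\mathcal{X}^2 \preceq \mathcal{A}^2$ together with $t^2 \geq 0$ gives $2t^2 \mathcal{X}^2 \preceq 2t^2 \mathcal{A}^2$, and chaining the two semidefinite inequalities completes the argument.

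The one point that needs care — and I would flag it as the main subtlety rather than a true obstacle — is the step $\cosh(2t\mathcal{X}) \preceq e^{2t^2\mathcal{X}^2}$: this is legitimate because $\cosh(2t\mathcal{X})$ and $e^{2t^2\mathcal{X}^2}$ are both functions of the single Hermitian tensor $\mathcal{X}$, so they are simultaneously diagonalizable and the operator inequality reduces genuinely to the scalarwise comparison of the spectra via~\eqref{eq:tensor psd ordering}. (This is why conditioning on $\mathcal{X}$ first is essential: after conditioning there is a single fixed tensor in play, and no noncommutativity issue arises.) Likewise, applying $\log$ to both sides is monotone by~\eqref{eq:log monotone} and the final substitution of $\mathcal{A}^2$ for $\mathcal{X}^2$ uses only that conjugation-free addition preserves the order; none of these steps requires commutativity between $\mathcal{X}$ and $\mathcal{A}$. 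Assembling these observations gives the stated bound $\log \mathbb{E}[e^{2\beta t \mathcal{X}}\mid \mathcal{X}] \preceq 2t^2\mathcal{A}^2$.
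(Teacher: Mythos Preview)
Your proof is correct and follows essentially the same route as the paper: the paper simply invokes Lemma~\ref{lma:mgf of Rademacher and normal rvs.} (which is precisely your $\cosh(s)\le e^{s^2/2}$ bound applied via the functional calculus) to obtain $\mathbb{E}[e^{2\beta t\mathcal{X}}\mid\mathcal{X}]\preceq e^{2t^2\mathcal{X}^2}$, then uses the operator monotonicity of the logarithm~\eqref{eq:log monotone} and the hypothesis $\mathcal{X}^2\preceq\mathcal{A}^2$ to conclude. Your explicit justification of the simultaneous-diagonalizability point is a welcome clarification, but the underlying argument is identical.
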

\begin{proof}
From Lemma~\ref{lma:mgf of Rademacher and normal rvs.}, we have 
\begin{eqnarray}
\mathbb{E} \left [ e^{2 \beta t \mathcal{X}} | \mathcal{X} \right] \preceq  e^{2 t ^2 \mathcal{X}^2}.
\end{eqnarray}
And, from the monotone property of logarithm, we also have 
\begin{eqnarray}
\log \mathbb{E} \left [ e^{2 t \theta \mathcal{X}} | \mathcal{X} \right] \preceq  2t^2 \mathcal{X}^2 \preceq 2 t^2 \mathcal{A}^2~~\mbox{for $t \in \mathbb{R}$}.  
\end{eqnarray}
Therefore, this Lemma is proved. 
\end{proof}

At this point, we are ready to present tensor martingale deviation bounds. In probability theory, the Azuma inequality for a scaler martingale gives normal concentration about its mean value, and the deviation is controlled by the total maximum squared of the difference sequence. Below is a tensor extension.

\begin{theorem}[Tensor Azuma Inequality]\label{thm:Tensor Azuma}
Given a finite adapted sequence of Hermitian tensors $\{ \mathcal{X}_i  \in \mathbb{C}^{I_1 \times \cdots \times I_M  \times I_1 \times \cdots \times I_M } \}$ and a fixed sequence of Hermitian tensors $\{ \mathcal{A}_i \}$ that satisfy
\begin{eqnarray}\label{eq1:thm:Tensor Azuma}
\mathbb{E}_{i-1} \mathcal{X}_i = 0 \mbox{~~and~~} \mathcal{X}^2_i \preceq  \mathcal{A}^2_i \mbox{almost surely}, 
\end{eqnarray}
where $i = 1,2,3,\cdots$. 

Define the total varaince $\sigma^2$ as: $\sigma^2 \define \left\Vert \sum\limits_i^n \mathcal{A}_i^2 \right\Vert$.
Then, we have following inequalities:
\begin{eqnarray}\label{eq2:thm:Tensor Azuma}
\mathrm{Pr} \left( \lambda_{\max}\left( \sum\limits_{i=1}^{n} \mathcal{X}_i \right)\geq \theta \right) \leq \mathbb{I}_1^M e^{-\frac{\theta^2}{8 \sigma^2}}.
\end{eqnarray}
\end{theorem}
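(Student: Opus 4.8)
The plan is to follow the Laplace-transform route together with a term-by-term ``peeling'' of the sum, exactly as in the independent-sum arguments of Section~\ref{sec:Trace Concavity Method}, but replacing the subadditivity lemma (Lemma~\ref{lma:subadditivity of tensor cgfs}) by a conditional version that accommodates the martingale-difference structure. Write $\mathcal{S}_k \define \sum_{i=1}^{k}\mathcal{X}_i$ with $\mathcal{S}_0 = \mathcal{O}$, and let $\mathbb{E}_i$ denote conditional expectation given $\mathfrak{F}_i$. By Lemma~\ref{lma: Laplace Transform Method} applied to $\mathcal{S}_n$, it suffices to control $\mathbb{E}\,\mathrm{Tr}\,e^{t\mathcal{S}_n}$ for each $t>0$; the key claim is
\begin{eqnarray}
\mathbb{E}\,\mathrm{Tr}\exp\left( t \mathcal{S}_n \right) \leq \mathrm{Tr}\exp\left( 2 t^2 \sum_{i=1}^{n}\mathcal{A}_i^2 \right),\qquad t>0.
\end{eqnarray}

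To prove this I would establish the one-step recursion
\begin{eqnarray}
\mathbb{E}_{k-1}\,\mathrm{Tr}\exp\left( t\mathcal{S}_k + 2t^2\sum_{i=k+1}^{n}\mathcal{A}_i^2 \right) \leq \mathrm{Tr}\exp\left( t\mathcal{S}_{k-1} + 2t^2\sum_{i=k}^{n}\mathcal{A}_i^2 \right)
\end{eqnarray}
for every $k$. Fix $k$ and set $\mathcal{H} \define t\mathcal{S}_{k-1} + 2t^2\sum_{i=k+1}^{n}\mathcal{A}_i^2$, which is $\mathfrak{F}_{k-1}$-measurable, hence ``fixed'' under $\mathbb{E}_{k-1}$ and under $\mathbb{E}_k$, while $t\mathcal{X}_k$ is a centered (given $\mathfrak{F}_{k-1}$) random Hermitian tensor. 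First apply the conditional form of the symmetrization lemma (Lemma~\ref{lma:tensor symmetriation}) to $\mathcal{H}$ and $t\mathcal{X}_k$, introducing a fresh Rademacher variable $\beta$ independent of $\mathfrak{F}_n$, to get $\mathbb{E}_{k-1}\mathrm{Tr}\,e^{\mathcal{H}+t\mathcal{X}_k}\leq \mathbb{E}_{k-1}\mathbb{E}_{\beta}\mathrm{Tr}\,e^{\mathcal{H}+2\beta t\mathcal{X}_k}$. Next, conditioning further on $\mathfrak{F}_k$ so that only $\beta$ is random and $\mathcal{H}$ stays fixed, Corollary~\ref{cor:3.3} gives $\mathbb{E}_{\beta}\mathrm{Tr}\,e^{\mathcal{H}+2\beta t\mathcal{X}_k}\leq \mathrm{Tr}\,e^{\mathcal{H}+\log\mathbb{E}_{\beta}e^{2\beta t\mathcal{X}_k}}$. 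By Lemma~\ref{lma:Azuma CGF} and the hypothesis $\mathcal{X}_k^2\preceq\mathcal{A}_k^2$ we have $\log\mathbb{E}_{\beta}e^{2\beta t\mathcal{X}_k}\preceq 2t^2\mathcal{A}_k^2$, hence $\mathcal{H}+\log\mathbb{E}_{\beta}e^{2\beta t\mathcal{X}_k}\preceq \mathcal{H}+2t^2\mathcal{A}_k^2$, and the trace-exponential monotonicity~\eqref{eq:trace exp monotone} yields $\mathrm{Tr}\,e^{\mathcal{H}+\log\mathbb{E}_{\beta}e^{2\beta t\mathcal{X}_k}}\leq\mathrm{Tr}\,e^{\mathcal{H}+2t^2\mathcal{A}_k^2}$. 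Since the right-hand side equals $\mathrm{Tr}\exp\left(t\mathcal{S}_{k-1}+2t^2\sum_{i=k}^n\mathcal{A}_i^2\right)$ and is $\mathfrak{F}_{k-1}$-measurable, taking $\mathbb{E}_{k-1}$ through the chain gives the recursion.

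Iterating the recursion for $k=n,n-1,\dots,1$ and invoking the tower property collapses the estimate to $\mathbb{E}\,\mathrm{Tr}\,e^{t\mathcal{S}_n}\leq\mathrm{Tr}\exp\left(2t^2\sum_{i=1}^n\mathcal{A}_i^2\right)$. Because $\sum_{i=1}^n\mathcal{A}_i^2\succeq\mathcal{O}$, bounding the trace of a positive tensor by $\mathbb{I}_1^M$ times its largest eigenvalue and applying the spectral mapping theorem gives $\mathrm{Tr}\exp\left(2t^2\sum_{i=1}^n\mathcal{A}_i^2\right)\leq\mathbb{I}_1^M e^{2t^2\sigma^2}$, using $\lambda_{\max}\left(\sum_{i=1}^n\mathcal{A}_i^2\right)=\left\Vert\sum_{i=1}^n\mathcal{A}_i^2\right\Vert=\sigma^2$. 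Feeding this into Lemma~\ref{lma: Laplace Transform Method} and minimizing $e^{-t\theta+2t^2\sigma^2}$ over $t>0$ at $t=\theta/(4\sigma^2)$ produces $\mathbb{I}_1^M e^{-\theta^2/(8\sigma^2)}$, which is exactly~\eqref{eq2:thm:Tensor Azuma}. (The same argument specializes to Theorem~\ref{thm:Tensor Hoeffding} when the $\mathcal{X}_i$ are independent.)

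The main obstacle I anticipate is bookkeeping rather than conceptual: Lemma~\ref{lma:tensor symmetriation} and Corollary~\ref{cor:3.3} are stated with an \emph{unconditionally} fixed Hermitian tensor and an unconditional expectation, so the write-up must justify their use with the $\mathfrak{F}_{k-1}$-measurable ``fixed'' tensor $\mathcal{H}$ and the conditional expectations $\mathbb{E}_{k-1},\mathbb{E}_k$ — i.e.\ that passing to a regular conditional probability leaves $\mathcal{H}$ deterministic and makes both results applicable verbatim, and that the auxiliary Rademacher $\beta$ is independent of the whole filtration. One must also track carefully which partial sums and which variance tensors are measurable with respect to which $\mathfrak{F}_k$ at each peeling step. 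Everything else (the Rademacher moment-generating-function estimate feeding Lemma~\ref{lma:Azuma CGF}, the scalar optimization over $t$) is routine.
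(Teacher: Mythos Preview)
Your proposal is correct and follows essentially the same approach as the paper: both proofs use Lemma~\ref{lma: Laplace Transform Method}, then peel off one summand at a time via the symmetrization Lemma~\ref{lma:tensor symmetriation}, Corollary~\ref{cor:3.3}, and Lemma~\ref{lma:Azuma CGF}, iterate to reach $\mathbb{E}\,\mathrm{Tr}\,e^{t\mathcal{S}_n}\leq\mathrm{Tr}\exp(2t^2\sum_i\mathcal{A}_i^2)$, bound the trace by $\mathbb{I}_1^M$ times the top eigenvalue, and optimize at $t=\theta/(4\sigma^2)$. Your write-up is in fact more careful than the paper's in making the conditional use of Lemma~\ref{lma:tensor symmetriation} and Corollary~\ref{cor:3.3} explicit; the paper applies them with $\mathfrak{F}_{k-1}$-measurable ``fixed'' data without comment.
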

\begin{proof}
Define the filtration $\mathfrak{F}_i \define \mathfrak{F}(\mathcal{X}_1, \cdots, \mathcal{X}_i)$ for the process $\{\mathcal{X}_i \}$. Then, we have 
\begin{eqnarray}\label{eq3:thm:Tensor Azuma}
\mathbb{E} \mathrm{Tr} \exp\left( \sum\limits_{i=1}^n t \mathcal{X}_i \right) &=& \mathbb{E}   \left( \mathbb{E}\left( \mathrm{Tr}\exp\left( \sum\limits_{i=1}^{n-1} t \mathcal{X}_i+ t \mathcal{X}_n \right) | \mathfrak{F}_{n}\right) | \mathfrak{F}_{n-1}\right) \nonumber \\
& \leq & \mathbb{E}   \left( \mathbb{E}\left( \mathrm{Tr}\exp\left( \sum\limits_{i=1}^{n-1} t \mathcal{X}_i+ 2 \beta t \mathcal{X}_n \right) | \mathfrak{F}_{n}\right) | \mathfrak{F}_{n}\right) \nonumber \\
& \leq &  \mathbb{E}   \left( \mathrm{Tr}\exp\left( \sum\limits_{i=1}^{n-1} t \mathcal{X}_i+ \log \mathbb{E} \left( e^{2 \beta t \mathcal{X}_n} | \mathfrak{F}_{n}\right)  \right) | \mathfrak{F}_{n}\right) \nonumber \\
& \leq & \mathbb{E} \mathrm{Tr} \exp \left( \sum\limits_{i=1}^{n-1} t \mathcal{X}_i +2 t^2 \mathcal{A}_n^2 \right),
\end{eqnarray}
where the first equality comes from the total expectation property of conditional expectation; the first inequality comes from Lemma~\ref{lma:tensor symmetriation}; the second inequality comes from Corollary~\ref{cor:3.3} and the relaxation the connection to the larger algebra set $\mathfrak{F}_n$; finally, the last inequality requires Lemma~\ref{lma:Azuma CGF}.

If we continue the iteration procedure based on Eq.~\eqref{eq3:thm:Tensor Azuma}, we have
\begin{eqnarray}\label{eq4:thm:Tensor Azuma}
\mathbb{E} \mathrm{Tr} \exp \left( \sum\limits_{i=1}^n t \mathcal{X}_i  \right) 
\leq \mathrm{Tr} \exp \left( 2t^2 \sum\limits_{i=1}^n \mathcal{A}^2_i \right),
\end{eqnarray}
then apply Eq.~\eqref{eq4:thm:Tensor Azuma} into Lemma~\ref{lma: Laplace Transform Method}, we obtain 
\begin{eqnarray}
\mathrm{Pr} \left( \lambda_{\max}\left( \sum\limits_{i=1}^{n} \mathcal{X}_i \right)\geq \theta \right) &\leq&  \inf\limits_{t > 0} \Big\{ e^{- t \theta} \mathbb{E} \mathrm{Tr}\exp \left( \sum\limits_{i=1}^n t \mathcal{X}_i \right) \Big\} \nonumber \\
& \leq &  \inf\limits_{t > 0} \Big\{ e^{- t \theta} \mathbb{E} \mathrm{Tr}\exp \left( 2 t^2 \sum\limits_{i=1}^n  \mathcal{A}^2_i \right) \Big\} \nonumber \\
& \leq & \inf\limits_{t > 0} \Big\{ e^{- t \theta} \mathbb{I}_{1}^M 
\lambda_{\max} \left(  \exp \left( 2t^2 \sum\limits_{i=1}^{n} \mathcal{A}^2_i \right)\right) \Big\} \nonumber \\
& = &\inf\limits_{t > 0} \Big\{ e^{- t \theta} \mathbb{I}_{1}^M 
\exp \left( 2t^2 \sigma^2 \right) \Big\} \nonumber \\
& \leq &  \mathbb{I}_{1}^M  e^{-\frac{\theta^2}{8 \sigma^2}},
\end{eqnarray}
where the third inequality utilizes $\lambda_{\max}$ to bound trace, the equality applies the definition of $\sigma^2$ and spectral mapping theorem, finally, we select $t = \frac{\theta}{4 \sigma^2}$ to minimize the upper bound to obtain this theorem.
\end{proof}

If we add extra assumption that the summands are independent, Theorem~\ref{thm:Tensor Azuma} gives a tensor extension of Hoeffding’s inequality. If we apply Theorem~\ref{thm:Tensor Azuma} to a Hermitian tensor martingale, we will have following corollary.
\begin{corollary}\label{cor:7.2}
Given a Hermitian tensor martingale $\{\mathcal{Y}_i: i=1,2,\cdots,n \} \in $ \\ $\mathbb{C}^{I_1 \times \cdots \times I_M \times I_1 \times \cdots \times I_M}$, and let $\mathcal{X}_i$ be the difference sequence of $\{\mathcal{Y}_i \}$, i.e., $\mathcal{X}_i \define \mathcal{Y}_i - \mathcal{Y}_{i-1}$ for $i = 1,2,3, \cdots$. If the difference sequence satisfies 
\begin{eqnarray}\label{eq1:cor:7.2}
\mathbb{E}_{i-1} \mathcal{X}_i = 0 \mbox{~~and~~} \mathcal{X}^2_i \preceq  \mathcal{A}_i \mbox{almost surely}, 
\end{eqnarray}
where $i = 1,2,3,\cdots$ and the total varaince $\sigma^2$ is defined as as: $\sigma^2 \define \left\Vert \sum\limits_i^n \mathcal{A}_i^2 \right\Vert$. Then, we have
\begin{eqnarray}
\mathrm{Pr} \left( \lambda_{\max}\left( \mathcal{Y}_n -  \mathbb{E} \mathcal{Y}_n \right)\geq \theta \right) \leq \mathbb{I}_1^M e^{-\frac{\theta^2}{8 \sigma^2}}.
\end{eqnarray}
\end{corollary}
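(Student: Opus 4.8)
The plan is to deduce Corollary~\ref{cor:7.2} directly from the Tensor Azuma Inequality (Theorem~\ref{thm:Tensor Azuma}) by applying it to the difference sequence $\{\mathcal{X}_i\}$, so essentially no new machinery is needed. First I would observe that since $\{\mathcal{Y}_i\}$ is a Hermitian tensor martingale and $\mathcal{X}_i \define \mathcal{Y}_i - \mathcal{Y}_{i-1}$, the difference sequence is adapted to the natural filtration $\mathfrak{F}_i \define \mathfrak{F}(\mathcal{X}_1,\dots,\mathcal{X}_i)$ and satisfies $\mathbb{E}_{i-1}\mathcal{X}_i = \mathcal{O}$; together with the hypothesis $\mathcal{X}_i^2 \preceq \mathcal{A}_i^2$ almost surely (reading the stated condition $\mathcal{X}_i^2 \preceq \mathcal{A}_i$ in Eq.~\eqref{eq1:cor:7.2} as $\mathcal{X}_i^2 \preceq \mathcal{A}_i^2$, consistent with the definition of $\sigma^2$), this is exactly the setup of Theorem~\ref{thm:Tensor Azuma}.

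Second, I would telescope: writing $\mathcal{Y}_n - \mathcal{Y}_0 = \sum_{i=1}^n \mathcal{X}_i$, and noting that taking expectations of the martingale property gives $\mathbb{E}\mathcal{Y}_n = \mathbb{E}\mathcal{Y}_0$ (or, if $\mathcal{Y}_0$ is deterministic, $\mathbb{E}\mathcal{Y}_n = \mathcal{Y}_0$), we obtain $\mathcal{Y}_n - \mathbb{E}\mathcal{Y}_n = \sum_{i=1}^n \mathcal{X}_i$ (up to the almost-surely-constant initial term, which is standard to absorb). Hence
\begin{eqnarray}
\mathrm{Pr}\left( \lambda_{\max}\left( \mathcal{Y}_n - \mathbb{E}\mathcal{Y}_n \right) \geq \theta \right) = \mathrm{Pr}\left( \lambda_{\max}\left( \sum_{i=1}^n \mathcal{X}_i \right) \geq \theta \right).
\end{eqnarray}

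Third, I would apply Theorem~\ref{thm:Tensor Azuma} to the right-hand side with the total variance $\sigma^2 \define \left\Vert \sum_i^n \mathcal{A}_i^2 \right\Vert$, which yields the bound $\mathbb{I}_1^M e^{-\theta^2/(8\sigma^2)}$ and completes the proof. The only genuinely delicate point — the ``main obstacle,'' though a mild one — is the bookkeeping around the initial tensor $\mathcal{Y}_0$: one must verify that $\mathbb{E}_{i-1}\mathcal{X}_i = \mathcal{O}$ for $i=1$ makes sense (i.e.\ $\mathcal{Y}_0$ is $\mathfrak{F}_0$-measurable, typically constant) so that the centering $\mathcal{Y}_n - \mathbb{E}\mathcal{Y}_n$ coincides exactly with the partial sum $\sum_{i=1}^n \mathcal{X}_i$ and the hypotheses of Theorem~\ref{thm:Tensor Azuma} are met verbatim. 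Everything else is a direct invocation of the already-established Azuma bound.
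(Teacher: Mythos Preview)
Your proposal is correct and matches the paper's approach exactly: the paper does not give an explicit proof of Corollary~\ref{cor:7.2} but simply remarks that it follows by applying Theorem~\ref{thm:Tensor Azuma} to a Hermitian tensor martingale, which is precisely the telescoping-plus-Azuma argument you outline. Your observation about the typo (reading $\mathcal{X}_i^2 \preceq \mathcal{A}_i$ as $\mathcal{X}_i^2 \preceq \mathcal{A}_i^2$) and your careful treatment of the initial term $\mathcal{Y}_0$ are both appropriate and, if anything, more explicit than what the paper provides.
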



In the scalar setting, McDiarmid inequality can be treated as a corollary of Azuma’s inequality. McDiarmid inequality states that a function of independent random variables exhibits normal concentration about its mean, and the variance
depends on the function value sensitivity with respect to the input. A version
of the bounded differences inequality is still valid in tensor context.

\begin{theorem}[Tensor McDiarmid Inequality]\label{thm:Tensor McDiarmid}
Given a set of $n$ independent random variables, i.e. $\{X_i: i = 1,2,\cdots n\}$, and let 
$F$ be a Hermitian tensor-valued function that maps these $n$ random variables to a Hermitian tensor of dimension within $\mathbb{C}^{I_1 \times \cdots \times I_M  \times I_1 \times \cdots \times I_M }$. Consider a sequence of Hermitian tensors $\{ \mathcal{A}_i \}$ that satisfy
\begin{eqnarray}\label{eq1:thm:Tensor McDiarmid}
\left( F(x_1,\cdots,x_i,\cdots,x_n)  -  F(x_1,\cdots,x'_i,\cdots,x_n) \right)^2 \preceq \mathcal{A}^2_i,
\end{eqnarray}
where $x_i, x'_i \in X_i$ and $1 \leq i \leq n$. Define the total variance $\sigma^2$ as: $\sigma^2 \define \left\Vert \sum\limits_i^n \mathcal{A}_i^2 \right\Vert $.
Then, we have following inequality:
\begin{eqnarray}\label{eq2:thm:Tensor McDiarmid}
\mathrm{Pr} \left( \lambda_{\max}\left( F(x_1,\cdots,x_n) - \mathbb{E}F(x_1,\cdots,x_n )\right)\geq \theta \right) \leq \mathbb{I}_1^M e^{-\frac{\theta^2}{8 \sigma^2}}.
\end{eqnarray}
\end{theorem}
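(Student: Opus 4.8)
The plan is to deduce the statement from the Tensor Azuma Inequality (Theorem~\ref{thm:Tensor Azuma}) by passing through the Doob martingale attached to $F$. Write $\mathcal{Z} \define F(X_1,\dots,X_n)$ and set, for $0 \le i \le n$,
\begin{eqnarray}
\mathcal{Y}_i &\define& \mathbb{E}\left[ \mathcal{Z} \mid X_1,\dots,X_i \right],
\end{eqnarray}
so $\mathcal{Y}_0 = \mathbb{E} F(X_1,\dots,X_n)$ and $\mathcal{Y}_n = F(X_1,\dots,X_n)$. Because the $X_i$ are independent and $F$ is Hermitian tensor-valued, the tower property shows $\{\mathcal{Y}_i\}$ is a Hermitian tensor martingale for the filtration $\mathfrak{F}_i \define \mathfrak{F}(X_1,\dots,X_i)$, so the difference sequence $\mathcal{X}_i \define \mathcal{Y}_i - \mathcal{Y}_{i-1}$ is adapted, satisfies $\mathbb{E}_{i-1}\mathcal{X}_i = \mathcal{O}$, and telescopes to $\sum_{i=1}^n \mathcal{X}_i = \mathcal{Y}_n - \mathcal{Y}_0 = F(x_1,\dots,x_n) - \mathbb{E} F(x_1,\dots,x_n)$. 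Everything then reduces to verifying the variance hypothesis $\mathcal{X}_i^2 \preceq \mathcal{A}_i^2$ of Theorem~\ref{thm:Tensor Azuma} (equivalently, Corollary~\ref{cor:7.2} applied to $\{\mathcal{Y}_i\}$).

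To control $\mathcal{X}_i$, I would introduce an independent copy $X_i'$ of $X_i$. Using independence, the contribution of $X_i$ to $\mathcal{Y}_{i-1}$ can be replaced by an average over $X_i'$, giving $\mathcal{Y}_{i-1} = \mathbb{E}[ F(X_1,\dots,X_{i-1},X_i',X_{i+1},\dots,X_n) \mid X_1,\dots,X_i]$, hence
\begin{eqnarray}
\mathcal{X}_i &=& \mathbb{E}\left[ F(X_1,\dots,X_i,\dots,X_n) - F(X_1,\dots,X_i',\dots,X_n) \mid X_1,\dots,X_i \right].
\end{eqnarray}
Denote the Hermitian tensor inside this conditional expectation by $\mathcal{G}_i$; by hypothesis~\eqref{eq1:thm:Tensor McDiarmid} we have $\mathcal{G}_i^2 \preceq \mathcal{A}_i^2$ almost surely, since $\mathcal{G}_i$ changes only the $i$-th coordinate.

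The main obstacle is converting the bound on $\mathcal{G}_i^2$ into a bound on $\mathcal{X}_i^2 = (\mathbb{E}[\mathcal{G}_i \mid X_1,\dots,X_i])^2$. Here I would invoke the operator Jensen inequality for the tensor convex squaring map $\mathcal{W} \mapsto \mathcal{W} \star_M \mathcal{W}$ — the same semidefinite convexity already recorded in Section~\ref{sec:Tensor Moments and Cumulants} in the form $\mathbb{E}(\mathcal{X}^2) \succeq (\mathbb{E}\mathcal{X})^2$ — applied conditionally on $\mathfrak{F}_i$, together with monotonicity of conditional expectation with respect to $\preceq$:
\begin{eqnarray}
\mathcal{X}_i^2 &=& \left( \mathbb{E}\left[ \mathcal{G}_i \mid X_1,\dots,X_i \right] \right)^2 \preceq \mathbb{E}\left[ \mathcal{G}_i^2 \mid X_1,\dots,X_i \right] \preceq \mathcal{A}_i^2 \mbox{~~almost surely.}
\end{eqnarray}
The delicate points are purely measure-theoretic: justifying the conditional-expectation identities and the conditional form of operator Jensen (the latter follows by applying the unconditional inequality pointwise on $\mathfrak{F}_i$), all covered by the standing regularity assumption at the end of Section~\ref{sec:Tensor Moments and Cumulants}.

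With $\mathbb{E}_{i-1}\mathcal{X}_i = \mathcal{O}$ and $\mathcal{X}_i^2 \preceq \mathcal{A}_i^2$ in hand, the pair $(\{\mathcal{X}_i\},\{\mathcal{A}_i\})$ satisfies exactly the hypotheses of Theorem~\ref{thm:Tensor Azuma} with $\sigma^2 = \left\Vert \sum_{i=1}^n \mathcal{A}_i^2 \right\Vert$. Applying that theorem and substituting $\sum_{i=1}^n \mathcal{X}_i = F(x_1,\dots,x_n) - \mathbb{E} F(x_1,\dots,x_n)$ gives
\begin{eqnarray}
\mathrm{Pr}\left( \lambda_{\max}\left( F(x_1,\dots,x_n) - \mathbb{E} F(x_1,\dots,x_n) \right) \ge \theta \right) \le \mathbb{I}_1^M e^{-\frac{\theta^2}{8\sigma^2}},
\end{eqnarray}
which is~\eqref{eq2:thm:Tensor McDiarmid}, completing the proof.
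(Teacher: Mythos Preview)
Your proof is correct and follows essentially the same route as the paper: build the Doob martingale $\mathcal{Y}_i = \mathbb{E}[F \mid X_1,\dots,X_i]$, use an independent copy $X_i'$ to rewrite the increment as a conditional expectation of a bounded difference, then invoke Corollary~\ref{cor:7.2}/Theorem~\ref{thm:Tensor Azuma}. If anything, your argument is slightly cleaner: you explicitly justify the passage from $\mathcal{G}_i^2 \preceq \mathcal{A}_i^2$ to $\mathcal{X}_i^2 = (\mathbb{E}[\mathcal{G}_i \mid \mathfrak{F}_i])^2 \preceq \mathcal{A}_i^2$ via the operator Jensen inequality, whereas the paper records the bound on $\mathbb{E}[\mathcal{G}_i^2]$ and leaves that step implicit.
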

\begin{proof}
We define following random tensors $\mathcal{Y}_i$ for $0 \leq i \leq n$ as:
\begin{eqnarray}\label{eq3:thm:Tensor McDiarmid}
\mathcal{Y}_i \define \mathbb{E} \left( F(x_1,\cdots,x_n) | X_1, \cdots, X_i \right) = \mathbb{E}_{X_{i+1}}  \mathbb{E}_{X_{i+2}} \cdots  \mathbb{E}_{X_{n}} F(x_1,\cdots,x_n),
\end{eqnarray}
where $ \mathbb{E}_{X_{i+1}}$ is the expectation with respect to the random variable $X_{i+1}$. The constructed sequence $\mathcal{Y}_i$ forms a martingale. The associated difference sequence with respect to $\mathcal{Y}$, denoted as $\{ \mathcal{Z}_i \}$, can be stated as:
\begin{eqnarray}\label{eq4:thm:Tensor McDiarmid}
\mathcal{Z}_i  \define \mathcal{Y}_i  - \mathcal{Y}_{i-1} = 
 \mathbb{E}_{X_{i+1}}  \mathbb{E}_{X_{i+2}} \cdots  \mathbb{E}_{X_{n}}\left(
 F(x_1,\cdots,x_n) -   \mathbb{E}_{X_{i}}  F(x_1,\cdots,x_n ) \right).
\end{eqnarray}

Because $(x_1,\cdots,x_i)$ forms a filtration with respect to $i$, we have 
\begin{eqnarray}
\mathbb{E}_{X_{i-1}}\mathcal{Y}_i &=& \mathbb{E}_{X_{i-1}} \left( \mathbb{E}_{X_{i+1}}  \mathbb{E}_{X_{i+2}} \cdots  \mathbb{E}_{X_{n}} F(x_1,\cdots,x_n) | X_{i-1} \right) \nonumber \\
&=& \mathbb{E}_{X_{i-1}} \left( \mathbb{E}_{X_{i}}  \mathbb{E}_{X_{i+1}} \cdots  \mathbb{E}_{X_{n}} F(x_1,\cdots,x_n) | X_{i-1} \right) = \mathbb{E}_{X_{i-1}}\mathcal{Y}_{i-1},
\end{eqnarray}
then,
\begin{eqnarray}\label{eq5:thm:Tensor McDiarmid}
\mathbb{E}_{X_{i-1}} \mathcal{Z}_i
&=& \mathbb{E}_{X_{i-1}}\mathcal{Y}_i - \mathbb{E}_{X_{i-1}}\mathcal{Y}_{i-1} = \mathcal{O}
\end{eqnarray}

Let $X'_i$ be an independent copy of $X_i$, and construct the following two random vectors:
\begin{eqnarray}
\mathbf{x}' &=&  (X_1,\cdots,X_{i-1},X'_i,X_{i+1},\cdots,X_n), \nonumber \\
\mathbf{x}  &=&  (X_1,\cdots,X_{i-1},X_i,X_{i+1},\cdots,X_n).
\end{eqnarray} 
Since $\mathbb{E}_{X_i} F(\mathbf{x}) =
\mathbb{E}_{X'_i} F(\mathbf{x}')$, we have 
\begin{eqnarray}\label{eq6:thm:Tensor McDiarmid}
\mathcal{Z}_i = \mathbb{E}_{X_{i+1}}  \mathbb{E}_{X_{i+2}} \cdots  \mathbb{E}_{X_{n}}  \mathbb{E}_{X'_{i}}  \left(
 F(\mathbf{x}) -    F(\mathbf{x}') \right).
\end{eqnarray}
Then, $\left(  F(\mathbf{x}) -    F(\mathbf{x}')  \right)^2 \preceq \mathcal{A}^2_i$ from requirement provided by Eq.~\eqref{eq1:thm:Tensor McDiarmid}. We have following upper bound 
\begin{eqnarray}\label{eq7:thm:Tensor McDiarmid}
\mathbb{E}_{X_{i+1}}  \mathbb{E}_{X_{i+2}} \cdots  \mathbb{E}_{X_{n}}  \mathbb{E}_{X'_{i}}  \left(
 F(\mathbf{x}) -    F(\mathbf{x}') \right)^2 \preceq \mathcal{A}^2_i.
\end{eqnarray}
Therefore, from conditions provided by Eq.~\eqref{eq5:thm:Tensor McDiarmid} and Eq.~\eqref{eq7:thm:Tensor McDiarmid}, this theorem is proved by applying Corollary~\ref{cor:7.2} to the martingale $\{\mathcal{Y}_i\}$.
\end{proof}

\section{Conclusion}\label{sec:Conclusion}

In this paper, we generalize Lapalce transform method and Lieb's concavity theorem from matrices to tensors, and apply these techniques to extend following classical bounds from the scalar to the tensor situation: Chernoff, Azuma, Hoeffding, Bennett, Bernstein, and McDiarmid. The purpose of these probability inequalities tries to identify large-deviation behavior of the extreme eigenvalue of the sums of random tensors. Tail bounds for the norm of a sum of random rectangular tensors follow as an immediate corollary. Finally, we also apply the proof techniques developed at this work to study tensor-valued martingales.  

\bibliographystyle{amsplain}
\bibliography{TailBoundsForTensorsSum_Bib}

\ACKNO{We are grateful to reviewers' precious comments.}

\end{document}